
\documentclass[10pt, a4paper, reqno]{amsart}

\usepackage{amsmath, amssymb}
\usepackage{stmaryrd}
\usepackage{enumitem}
\usepackage[all]{xy}
\usepackage{extarrows}
\usepackage{mathtools}
\usepackage{dynkin-diagrams}
\usepackage{caption}
\usepackage[super]{nth}
\usepackage{csquotes}
\usepackage{soul}
\usepackage[french, english]{babel}

\usepackage{hyperref}

\usepackage[normalem]{ulem}

\newtheorem{theorem}{Theorem}[section]
\newtheorem{corollary}[theorem]{Corollary}
\newtheorem{lemma}[theorem]{Lemma}
\newtheorem{lemmadef}[theorem]{Lemma and Definition}
\newtheorem{prop}[theorem]{Proposition}

\newtheorem*{claim}{Claim}
\theoremstyle{definition}
\newtheorem{definition}[theorem]{Definition}
\newtheorem{notation}[theorem]{Notation}

\theoremstyle{remark}
\newtheorem{example}[theorem]{Example}
\newtheorem{remark}[theorem]{Remark}

\theoremstyle{theorem}
\newtheorem{maintheorem}{Theorem}


\def\id{\mbox{Id} }

\DeclareMathOperator{\GL}{GL}
\DeclareMathOperator{\SL}{SL}

\DeclareMathOperator{\pr}{pr}

\DeclareMathOperator{\rank}{rank}

\DeclareMathOperator{\kk}{\textbf{k}}

\DeclareMathOperator{\Q}{\mathbb{Q}}

\DeclareMathOperator{\tr}{tr}

\newcommand{\tor}{tor}
\newcommand{\utor}{u-tor}

\renewcommand{\id}{\mathrm{id}}

\def\RR{\mathbb{R}}
\def\PP{\mathbb{P}}
\def\AA{\mathbb{A}}
\def\ZZ{\mathbb{Z}}
\def\CC{\mathbb{C}}
\def\QQ{\mathbb{Q}}
\def\GG{\mathbb{G}}

\def\NN{\mathbb{N}}

\DeclareMathOperator{\ant}{ant}
\DeclareMathOperator{\aff}{aff}

\newcommand{\aquot}{/ \! /}
\newcommand{\set}[2]{\{\,#1 \ | \ #2\,\}}
\newcommand{\Bigset}[2]{\left\{\,#1 \ \Big| \ #2\,\right\}}

\begin{document}	

\begin{abstract}
	We describe the Zariski-closure of sets of torsion points in connected algebraic groups. This is a generalization of the Manin-Mumford conjecture for commutative algebraic groups proved by Hindry ~\cite{Hi1988Autour-dune-conjec}.  
	He proved that every subset with Zariski-dense torsion points is the finite union of torsion-translates of algebraic subgroups.
	We formulate and prove an analogous theorem for arbitrary connected algebraic groups.
	 
	 We also define a canonical height on connected algebraic groups that coincides with  a Néron-Tate height  if $G$ is a  (semi-) abelian variety. This motivates a generalization of the Bogomolov conjecture to arbitrary connected algebraic groups defined over a number field. We prove such a generalization as well.    
\end{abstract}
	
\title[On the Manin-Mumford Theorem for algebraic groups]
{On the Manin-Mumford Theorem for algebraic groups}
\author{Harry Schmidt \and Immanuel van Santen}
\address{Harry Schmidt, University of Basel, Department of Mathematics and Computer Science, Spiegelgasse $1$, CH-$4051$ Basel, Switzerland}
\email{harry.schmidt@unibas.ch}
\address{Immanuel van Santen, University of Basel, Department of Mathematics and Computer Science, Spiegelgasse $1$, CH-$4051$ Basel, Switzerland}
\email{immanuel.van.santen@math.ch}

\subjclass[2020]{14L10 (primary), and 14L24, 14G40 (secondary)}
\keywords{Manin-Mumford, algebraic groups}
	
\maketitle

\tableofcontents

\section{Introduction}
We work over an algebraically closed field $\kk$ of characteristic zero. All varieties, morphisms, algebraic groups
and group actions are defined over $\kk$ and we work exclusively with the Zariski topology.

Let $G$ be an algebraic group and denote by $G_{\tor}$ the set of torsion points of $G$, i.e.~the subset of
$g \in G$ such that $g^n$ is the identity for some integer $n > 0$. It is a fundamental problem to describe subvarieties
in $G$ such that the torsion points lie dense in them. This problem is commonly associated with the Manin-Mumford conjecture for $G$ an abelian variety and was resolved by Raynaud \cite{Raynaudcourbes,Raynaudsous} for $\kk = \CC$. For a survey on the Manin-Mumford conjecture, see~\cite{Tz2000The-Manin-Mumford-}. 
A generalization for any commutative connected algebraic group $G$ is due to Hindry:

\begin{theorem}[{\cite[Th\'eor\`eme~2]{Hi1988Autour-dune-conjec}}]
	\label{Thm.MM}
	Let $G$ be a commutative connected algebraic group and let $X \subseteq G$ be a subset
	such that the torsion points $X \cap G_{\tor}$ lie dense in $X$. Then there exist finitely many
	connected algebraic subgroups $S_1, \ldots, S_r$ and $t_1, \ldots t_r \in G_{\tor}$ such that the closure of $X$ in $G$ is given by
	\[
		\overline{X} = \bigcup_{i=1}^r t_i S_i
	\]
	and $(S_i)_{\tor}$ is dense in $S_i$ for $i =1, \ldots, r$.
\end{theorem}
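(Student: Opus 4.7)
The plan is to combine standard reductions with the semi-abelian Manin--Mumford theorem, proved via Galois-theoretic methods following Hindry.

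First, carry out the reductions. Replace $X$ by $\overline X$, and decompose into irreducible components---at least one retains dense torsion---so assume $X$ is closed and irreducible. A spreading-out argument (choose a finitely generated subring of $\kk$ over which $G$, $X$, and a Zariski-dense countable set of torsion points of $X$ are defined, then specialize to a closed point with number-field residue field) reduces to the case where $G$ and $X$ are defined over a number field $K$. Next, reduce to a semi-abelian variety: in characteristic zero the unipotent radical $G_u$ is isomorphic to $\mathbb{G}_a^n$, on which multiplication by $n$ is bijective, so the snake lemma applied to $0 \to G_u \to G \to H := G/G_u \to 0$ yields $G[n] \xrightarrow{\sim} H[n]$ and hence a bijection $G_{\tor} \xrightarrow{\sim} H_{\tor}$. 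Consequently $\pi \colon G \to H$ sends the dense torsion of $X$ to a dense set of torsion points of $\overline{\pi(X)}$, so by the semi-abelian case $\overline{\pi(X)} = t' S'$ for some torsion $t' \in H_{\tor}$ and a connected subgroup $S' \le H$ with dense torsion (irreducibility eliminating multiple components). Lifting $t'$ to a torsion point $\tilde t \in G_{\tor}$ and letting $\tilde S \subseteq \pi^{-1}(S')$ be the Zariski closure of the set-theoretic torsion section over $S'$, one checks that $\tilde S$ is a connected algebraic subgroup of $G$ with dense torsion and contains $X - \tilde t$; an induction on $\dim G_u$ completes this step.

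For the semi-abelian core, follow Hindry's Galois-theoretic strategy. By Serre's open image theorem for abelian varieties, together with Kummer theory on the toric factor and Bertrand's extension to the mixed case, the Galois orbit of any torsion point of exact order $n$ has cardinality $\gg_K n^\alpha$ for some fixed $\alpha > 0$. Fix a torsion point $x_0 \in X$ of very large order: every Galois conjugate $\sigma(x_0)$ again lies in $X$, giving a set of size $\gg n^\alpha$ of torsion points inside $X$ belonging to a single Galois orbit. Comparing $X$ with its images under multiplication-by-$m$ for suitable $m$ and applying a degree-counting argument in $G$ then forces the identity component of $\mathrm{Stab}(X) := \{g \in G : g + X = X\}$ to have dimension equal to $\dim X$; this makes $X$ a coset of a connected algebraic subgroup, necessarily with dense torsion since $X$ does. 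Equivalently, one may use a $p$-adic Frobenius-eigenvalue argument after reduction modulo a prime of good reduction, as in Raynaud's original proof.

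The main obstacle is the semi-abelian core, and specifically the bootstrap from ``many Galois conjugates of torsion inside $X$'' to ``the stabilizer of $X$ has dimension $\dim X$''. The Galois-orbit input requires Serre's open image theorem and its semi-abelian extensions---results whose proofs are deep and fundamentally require the number-field setup, which is why the initial spreading-out step is unavoidable. The combinatorial and intersection-theoretic step that upgrades Galois orbit size to stabilizer dimension is also delicate and forms the technical heart of the argument. Once this semi-abelian case is established, the remaining reductions together with the verification that each $S_i$ in the final decomposition inherits dense torsion follow as a bookkeeping consequence of the Chevalley structure of commutative algebraic groups.
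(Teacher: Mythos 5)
The paper does not contain a proof of this statement: it is quoted directly as Hindry's Th\'eor\`eme~2 and used as a black box throughout. So there is no ``paper's own proof'' to compare against, and I evaluate your reconstruction on its own merits. Your outline (spreading out to a number field, reduction to a semi-abelian quotient, Galois-theoretic counting via Serre's open image and Kummer/Bertrand theory) is broadly in the spirit of Hindry's method, and the semi-abelian core is a reasonable if heavily compressed sketch.

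However, the reduction from a general commutative $G$ to its semi-abelian quotient $H = G/G_u$ has a genuine gap. You correctly note that $G_{\tor}\to H_{\tor}$ is a group isomorphism and that the Zariski closure $\tilde S$ of the torsion section over $S'$ is a connected closed subgroup of $G$ with dense torsion containing $X-\tilde t$. But the induction on $\dim G_u$ need not terminate: $\tilde S$ can equal $G$ even when $G_u\neq 0$. Concretely, let $E$ be a nonsplit extension of an elliptic curve $C$ by $\GG_a$; such an $E$ is anti-affine (its class in $H^1(C,\OO_C)$ is nonzero), and this is exactly the group $E = G_{\ant}$ appearing in the example just before the proof of Theorem~\ref{Thm.MMalg}. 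By Proposition~\ref{Prop.dense}, $E_{\tor}$ is dense in $E$, since every homomorphism $E\to\GG_a$ is constant. So for $G=E$ and any $X$ dominating $C$, your construction yields $\tilde S=\overline{E_{\tor}}=E=G$, with $\dim (\tilde S)_u = \dim G_u$, and the step produces only the tautology $X\subseteq E$. Yet the theorem asserts something nontrivial here: the only one-dimensional connected subgroup of $E$ is $\GG_a$, which has no torsion, so the theorem implies that no curve in $E$ has dense torsion --- a statement your reduction does not reach. Hindry must therefore run the Galois/degree-comparison argument directly on an equivariant compactification of the full commutative group $G$ (with its divisor at infinity), not merely on the semi-abelian quotient $H$; the passage to $G/G_u$ as an inductive reduction cannot replace this.
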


One of the aims of this article is to generalize the above statement to any connected algebraic group. 
This follows the desire to translate results on commutative algebraic groups to not necessarily commutative  algebraic groups. A recent example is a non-commutative version 
of the famous sub-space theorem by Yafaev \cite[Theorem~1.2]{andreisubspace}. Recently, Corvaja, Rapinchuk, Ren and Zannier have used a generalization of Manin-Mumford for the algebraic torus, namely Laurent's theorem, in order to prove deep results on boundedly generated subgroups of $\GL_n$ \cite[Theorem~1.1]{boundedlygen}. Laurent's theorem describes the Zariski-closure of subsets of finite rank subroups of an algebraic torus.  Another example that is closer to our objective is the classification of finite subgroups of $\GL_n(\mathbb{Q})$ of maximal order \cite{maximalorders, feitorders} that is related to the uniform boundedness conjecture and  Mazur's classification of finite subgroups of the rational points of elliptic curves \cite[Theorem~8]{Mazurrational}. 

However, the following
example shows that a straightforward generalization of Theorem \ref{Thm.MM} is already impossible for $\SL_2(\kk)$:

\begin{example}
	\label{Exa.Introduction}
	Let $G = \SL_2(\kk)$. The trace morphism $\tr \colon G \to \AA^1$ is constant on conjugacy classes
	and the set of torsion points is given by
	\[
		G_{\tor} = 
		\{
			\pm \begin{psmallmatrix}
				1 & 0 \\ 0 & 1
			\end{psmallmatrix}
		\} \cup 
		\tr^{-1}(S) \, , \quad \textrm{where} \
		S = \set{t+t^{-1}}{\textrm{$t \in \kk \setminus \{\pm1\}$ is a root of unity}} \, .
	\]		
	Let $X$ be	an irreducible closed hypersurface of $G$ such that $\tr(X)$ is dense in $\AA^1$.
	Then there is an open dense subset $U \subseteq \AA^1$ such that 
	$\tr^{-1}(u) \cap X$ is a (possibly reducible) curve for all $u \in U$. Since $U$ intersects $S$ in infinitely many points,
	the set of torsion points $X \cap G_{\tor}$ contains infinitely many pairwise disjoint curves and thus lies dense in $X$.
	
	Every connected $2$-dimensional subgroup $S_1$ of $G$ is conjugated to the subgroup
	of upper triangular matrices in $G = \SL_2(\kk)$ and in particular $t_1 S_1$ is  isomorphic as a variety 
	to $\AA^1 \times (\AA^1 \setminus \{0\})$
	for every (torsion) point $t_1 \in G$. However, there are many (smooth) 
	irreducible hypersurfaces $X \subseteq \SL_2(\kk)$ such that $\tr(X)$
	is dense in $\AA^1$, but  $X$ is non-isomorphic to $\AA^1 \times (\AA^1 \setminus \{0\})$: E.g., if $X$
	is the image under any automorphism of the underlying variety of $\SL_2(\kk)$ applied to the image of the closed embedding 
	$\AA^2 \to \SL_2(\kk)$, $(s, t) \mapsto 
	\begin{psmallmatrix}
		1 & s \\ t & 1+st
	\end{psmallmatrix}
	$.
\end{example}

The above example shows also that we cannot hope for such an easy description of the closed subvarieties of an algebraic group with a dense set of torsion points as in Theorem~\ref{Thm.MM}.
However, we may describe such  closed subvarieties ``up to conjugation''. 
If $X \subseteq G$ is a subset of an algebraic group $G$, then $C_G(X)$ denotes the union of all conjugacy classes of elements in $X$. 
Our first main result is a generalization of Theorem~\ref{Thm.MM} to arbitrary connected algebraic groups $G$:

\begin{maintheorem}[see Theorem~\ref{Thm.MMalg}]
	\label{Mainthm.MMalg}
	Let $G$ be a connected algebraic group and let $X \subseteq G$ be a subset such that 
	$X \cap G_{\tor}$ is dense in $X$. Then there exist finitely many commutative connected
	algebraic subgroups $S_1, \ldots, S_r$ and $t_1, \ldots, t_r \in G_{\tor}$ such that the closure of $C_G(X)$ in $G$ is given by
	\[
		\overline{C_G(X)} = \bigcup_{i=1}^r \overline{C_G(t_i S_i)} \, ,
	\]
	$t_i$ commutes with $S_i$ and $(S_i)_{\tor}$ is dense in $S_i$ for all $i=1, \ldots, r$.
\end{maintheorem}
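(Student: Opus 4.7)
The plan is to reduce the statement to Hindry's commutative theorem (Theorem~\ref{Thm.MM}) applied inside a suitably chosen commutative algebraic subgroup of $G$. We may first assume $X$ is closed and irreducible: decomposing $\overline{X}$ into its irreducible components, each component still has a dense set of torsion points, and if the theorem holds on each piece then it holds on their union. Then $\overline{C_G(X)}$, as the closure of the image of the irreducible variety $G \times X$ under conjugation, is itself irreducible.

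The first main reduction handles the abelian variety quotient via Chevalley's structure theorem $1 \to \Gaff \to G \xrightarrow{\pi} A \to 1$. Because $A$ is commutative, conjugation in $G$ descends trivially to $A$, so $\pi(\overline{C_G(X)}) = \pi(\overline{X})$ is a closed subset of $A$ with a dense set of torsion points. Applying Theorem~\ref{Thm.MM} to $A$ produces $\pi(\overline{X}) = \bigcup_j s_j B_j$ with $B_j$ abelian subvarieties and $s_j \in A_{\tor}$, and after choosing torsion lifts $\sigma_j \in G_{\tor}$ of $s_j$ and restricting to a single $j$, we may assume $\pi(\overline{X})$ is a single torsion-translate of an abelian subvariety.

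The second reduction is to find, within this fiber, a connected commutative algebraic subgroup $H \leq G$ containing $\sigma_j$ with $\pi(H) = B_j$ such that every torsion element of $\overline{C_G(X)}$ is $G$-conjugate to some element of $H$. In characteristic zero every torsion element of $G$ is semisimple (the unipotent part of a finite-order element is trivial), and semisimple elements of a connected algebraic group lie in a maximal torus of their centralizer. Combined with a Levi decomposition of $\Gaff$, this allows me to construct $H$ by starting from a maximal torus containing the semisimple part of $\sigma_j$ and adjoining $\sigma_j$ itself. Setting $Y := \overline{C_G(X)} \cap H$, I verify that $Y \cap H_{\tor}$ is dense in $Y$ and apply Theorem~\ref{Thm.MM} to $Y \subseteq H$ to obtain commutative subgroups $S_i$ and torsion $t_i$ (which automatically commute with $S_i$ since $H$ is commutative) giving $\overline{Y} = \bigcup_i t_i S_i$. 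Taking $C_G$-closures then yields the claimed decomposition of $\overline{C_G(X)}$.

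The main obstacle is the construction of $H$ and the verification that $\overline{C_G(Y)} = \overline{C_G(X)}$. Ensuring that $H$ is genuinely commutative while still capturing every torsion conjugacy class of $\overline{C_G(X)}$ requires a careful analysis of semisimple elements in the extension $\Gaff \hookrightarrow G \twoheadrightarrow A$ and, in particular, a suitable choice of torsion lift $\sigma_j$ commuting with the chosen maximal torus. Making these choices compatibly across different components $s_j B_j$, and showing that no torsion conjugacy class is lost in the intersection $\overline{C_G(X)} \cap H$, is the technical heart of the argument.
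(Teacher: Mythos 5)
Your strategy — intersect $\overline{C_G(X)}$ with a commutative subgroup $H$ that captures all the torsion up to conjugacy, apply Hindry's theorem there, and transport back — is the right skeleton and is close in spirit to the paper's. But the step you flag as the ``technical heart,'' showing that $Y := \overline{C_G(X)} \cap H$ has dense torsion and that $\overline{C_G(Y)} = \overline{C_G(X)}$, is precisely where the substance of the proof lies, and your outline contains no route to it. In the paper this is achieved through the categorical quotient $\pi_G \colon G \to G \aquot G$ by conjugation (Proposition~\ref{Prop.Alg_quotient_general}): the restriction $\rho_{G,T} = \pi_G |_{G_{\ant}T}$ is a \emph{finite} geometric quotient by the Weyl group (Corollary~\ref{Cor.Restriction_of_alg_quotient_general}), the fibres of $\pi_G$ meet $G_{\ant}T$ in single $W$-orbits (Corollary~\ref{Cor.Conjugacy_classes_general}), and it is exactly this finite-quotient structure that lets density of torsion descend and then lift (Corollary~\ref{Cor.Geometric_quotient_density}). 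Merely intersecting with a commutative subgroup does not propagate density; without some such quotient structure, ``$Y \cap H_{\tor}$ is dense in $Y$'' is asserted, not proved.

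Two further difficulties are elided. First, the categorical quotient argument only works when $R_u(G_{\aff})$ is trivial; for a non-reductive affine group, the torsion set is \emph{not} the preimage of the maximal torus under the Levi projection (see Example~\ref{Exa.Torsion_in_Borel}), which is the ``first serious difficulty'' the introduction singles out. The paper resolves it with a stratified geometric quotient by $R_u$-conjugation (Lemma~\ref{Lem.Stratification}), a tool with no counterpart in your outline. Second, your appeal to ``the semisimple part of $\sigma_j$'' and to ``maximal tori of the centralizer'' presupposes a Jordan decomposition, which for non-affine $G$ exists only after passing to $\hat{G} = G/R_u(G_{\aff})$, where $G_{\ant}$ becomes semi-abelian (Lemmas~\ref{Lem.Aff_Ant_of_GmodR_uG} and~\ref{Lem.Jordan}); for a general $G_{\ant}$, e.g.\ a vector extension of an abelian variety, there is no such decomposition. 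Relatedly, the paper uses Rosenlicht's decomposition $G = G_{\ant}G_{\aff}$ rather than the Chevalley sequence $1 \to G_{\aff} \to G \to A \to 1$ precisely because the anti-affine part $G_{\ant}$ is a canonical \emph{central commutative} subgroup that, together with the isogeny $G_{\ant} \times^S G_{\aff} \to G$, supplies your desired $H$ (namely $G_{\ant}T$) and reduces the problem to an honest product $F\times H$; your Chevalley-based lift of $B_j$ to a commutative subgroup of $G$ would instead have to be produced by hand, and no recipe for it is given.
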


Morally, we replace the points in a commutative algebraic group by conjugacy classes in an arbitrary algebraic group. 
For an affine reductive algebraic group $G$ this also gives the strategy of the proof; more precisely: Denote 
by $T \subseteq G$ a maximal torus. We may assume that $X$ is closed and stable under $G$-conjugation. Then one
can show that
the torsion points $X \cap T_{\tor}$ lie dense in $X \cap T$ by
using the categorical quotient $G \to G \aquot G$ with respect to conjugation. 
Thus we may apply Theorem~\ref{Thm.MM} to the closed
subvariety $X \cap T$ of the torus $T$ and apply the operator $\overline{C_G(\cdot)}$ in order to get the result.

It is far from obvious to determine $C_G(X)$, when $X$ is given and to decide whether it is a finite union of conjugacy classes of translates of commutative algebraic groups. An example, where this is done for certain matrix polynomial is \cite[Theorem 1]{ostafeerratum}. 

\smallskip

A drawback of the subvarieties $\overline{C_G(t S)}$ of $G$ from Theorem~\ref{Mainthm.MMalg} is that
the intersection of two such subvarieties is in general not a finite union of such subvarities, see \S\ref{Subsect.Intersection_SUV},
in contrast to the torsion-translates of algebraic subgroups in a commutative algebraic subgroup in Theorem~\ref{Thm.MM}, see Lemma~\ref{Lem.Subtori_I}.
 This failure can be fixed by replacing ``torsion'' $G_{\tor}$ with the coarser notion of 
``unipotent-torsion'' $G_{\utor}$, 
i.e.~$G_{\utor}$ consists of those 
$g \in G$ such that $g^n$ is unipotent for some integer $n > 0$, see Definition~\ref{Def.utorsion}.
Our second main result is a variation of Theorem~\ref{Mainthm.MMalg} to this ``unipotent-torsion"-setting.

For the formulation, we need the morphism $\pi \colon G \to \hat{G} \aquot \hat{G}$, which is the composition of the  quotient 
$G \to \hat{G}$ by the unipotent radical of $G$ followed by the categorical quotient
$\hat{G} \to \hat{G} \aquot \hat{G}$ with respect to conjugation, see~\S\ref{Subsect.Structure_results} and \S\ref{Subsect.Conjugation}.

\begin{maintheorem}[see Theorem~\ref{Thm.MMalg_unipot}]
	\label{Mainthm.MMalg_unipot}
	Let $G$ be a connected algebraic group and let $X \subseteq G$ be a subset such that 
	$X \cap G_{\utor}$ is dense in $X$. Then there exist commutative connected algebraic subgroups
	$S_1, \ldots, S_r$ and $h_1, \ldots, h_r \in G_{\utor}$ such that
	\[
		\pi^{-1}(\overline{\pi(X)}) = \bigcup_{i=1}^r  \pi^{-1}(\pi(h_i S_i)) \, ,
	\]
	where $h_i$ commutes with $S_i$ for all $i = 1, \ldots, r$. 
\end{maintheorem}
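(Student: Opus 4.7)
The plan is to reduce Theorem~\ref{Mainthm.MMalg_unipot} to Theorem~\ref{Mainthm.MMalg} via the Jordan decomposition. The key observation is that $\pi$ ignores unipotent parts: for every $g\in G$ with Jordan decomposition $g = g_s g_u$ one has $\pi(g) = \pi(g_s)$. This is because $G\to\hat G$ kills $R_u(G)$ and preserves the Jordan decomposition, while the categorical quotient $\hat G\to\hat G\aquot\hat G$ identifies points of the same orbit closure; in particular the unique closed conjugacy class in the closure of the conjugacy class of $\bar g$ is that of the semisimple part $\bar g_s$. As an immediate consequence, $g\in G_{\utor}$ if and only if $g_s\in G_{\tor}$, because some power $g^n$ is unipotent precisely when $(g^n)_s = g_s^n = 1$.

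Next I would replace $X$ with $\overline{C_G(X\cap G_{\utor})}$; this preserves both sides of the claimed identity (since $\pi$ and $G_{\utor}$ are conjugation-invariant) and allows us to assume $X$ is closed and conjugation-invariant with $X\cap G_{\utor}$ dense in $X$. Define
\[
X_s := \{\, g_s \,:\, g\in X\cap G_{\utor}\,\}\subseteq G_{\tor}, \qquad Y' := \overline{X_s}.
\]
Then $Y'$ is a closed conjugation-invariant subvariety of $G$ with $Y'\cap G_{\tor}\supseteq X_s$ dense in $Y'$, so Theorem~\ref{Mainthm.MMalg} applied to $Y'$ yields commutative connected algebraic subgroups $S_1,\ldots,S_r$ and torsion points $t_1,\ldots,t_r\in G_{\tor}$ with each $t_i$ commuting with $S_i$, such that
\[
Y' = \overline{C_G(Y')} = \bigcup_{i=1}^r \overline{C_G(t_iS_i)}.
\]

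To conclude, the identity $\pi(g)=\pi(g_s)$ together with continuity of $\pi$ yields $\pi(Y') = \overline{\pi(X_s)} = \overline{\pi(X\cap G_{\utor})} = \overline{\pi(X)}$, where one uses that $\pi$ maps closed conjugation-invariant subsets of $G$ to closed subsets of $\hat G\aquot\hat G$ (good-quotient property). Therefore
\[
\pi^{-1}(\overline{\pi(X)}) = \pi^{-1}(\pi(Y')) = \bigcup_{i=1}^r \pi^{-1}(\pi(t_iS_i)),
\]
where the last equality uses conjugation-invariance of $\pi$ to replace each $\pi(\overline{C_G(t_iS_i)})$ by $\pi(t_iS_i)$ inside $\pi^{-1}(\cdot)$. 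Setting $h_i := t_i\in G_{\tor}\subseteq G_{\utor}$ provides the desired description.

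The most delicate ingredient is the identity $\pi(g)=\pi(g_s)$: it rests on standard GIT facts for the conjugation action on reductive groups, extended to the possibly non-affine group $\hat G$ (an extension of an abelian variety by the reductive quotient $G_{\aff}/R_u(G_{\aff})$), exploiting that conjugation acts trivially on the abelian variety quotient. A secondary technical point is verifying the closedness properties of $\pi$ needed to pass between $\pi(\overline{C_G(t_iS_i)})$ and $\pi(t_iS_i)$, which again hinges on the good-quotient structure of the categorical quotient by conjugation.
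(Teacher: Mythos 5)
Your plan is to reduce Theorem~B to Theorem~A by passing to semisimple parts, which is a genuinely different route from the paper (the paper instead pulls $\overline{\pi(X)}$ back along $\rho$ to the commutative group $\hat{G}_{\ant}\hat{T}$, applies Hindry's Theorem~\ref{Thm.MM} directly, and pushes forward via Lemma~\ref{Lem.uSUV}). Unfortunately there is a real gap at the very start of your argument: you define
\[
X_s := \{\, g_s : g \in X \cap G_{\utor}\,\}
\]
using a Jordan decomposition $g = g_s g_u$ in $G$ itself, but such a decomposition does \emph{not} exist for a general connected algebraic group. The paper's Lemma~and~Definition~\ref{Lem.Jordan} is stated only under the hypothesis that $G_{\ant}$ is semi-abelian, and this hypothesis genuinely fails in general: for instance, if $G = E \times^{\GG_a} U$ is the group from the example following Proposition~\ref{Prop.MMProduct_gen}, then $G_{\ant} = E$ is an extension of an elliptic curve by $\GG_a$, so $G_{\ant}$ is not semi-abelian. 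For that $G$, a unipotent-torsion element $g$ lying in $E$ but not in $\GG_a$ (i.e. an element whose image in the elliptic curve $E/\GG_a$ is a non-trivial torsion point) admits no factorisation $g = g_s g_u$ with $g_s$ semisimple and $g_u$ unipotent inside $G$. So $X_s$, and hence $Y'$, is simply not defined, and the claimed equivalence ``$g\in G_{\utor}$ iff $g_s\in G_{\tor}$'' is a category error. This is precisely why the paper only ever takes semisimple parts \emph{after} projecting to $\hat{G} = G/R_u(G_{\aff})$, since Lemma~\ref{Lem.Aff_Ant_of_GmodR_uG} guarantees that $\hat{G}_{\ant}$ is semi-abelian.

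Your argument could in principle be patched by replacing $X_s$ with $\hat{X}_s := \{\,p(g)_s : g \in X\cap G_{\utor}\,\} \subseteq \hat{G}_{\tor}$ and applying Theorem~A inside $\hat{G}$, but then you must lift the resulting torsion-translates of commutative subgroups of $\hat{G}$ back through $\pi_{\hat{G}}$ and $p$ to obtain sets of the form $\pi^{-1}(\pi(h_i S_i))$ with $S_i \subseteq G$ and $h_i \in G_{\utor}$; that lifting is the content of (the second half of) Lemma~\ref{Lem.uSUV}, and at that point you are essentially re-doing the paper's proof, except that you have taken a detour through the substantially harder Theorem~A (with its Rosenlicht decomposition and stratification arguments) rather than through the commutative case $\hat{G}_{\ant}\hat{T}$ alone. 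You should also be more careful with the closedness claim ``$\pi$ maps closed conjugation-invariant subsets to closed subsets'': this holds for $\pi_{\hat{G}} \colon \hat{G} \to \hat{G}\aquot\hat{G}$ (Lemma~\ref{Lem.G'-Closedness} together with the principal bundle structure in Remark~\ref{Rem.Alg_quotient_general}), but the first factor $p \colon G \to \hat{G}$ is a quotient by a unipotent group and is not a priori closed; the paper sidesteps this by proving $\pi(tS) = \rho(h'S')$ is closed via finiteness of $\rho$ (Remark~\ref{Rem.uSUV}), which only needs the continuity inclusion $\pi(\overline{C_G(tS)}) \subseteq \overline{\pi(tS)} = \pi(tS)$ rather than closedness of $\pi$.
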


The subvarieties $\pi^{-1}(\pi(h S))$, where $S$ is a connected commutative algebraic subgroup of $G$ and $h \in G_{\utor}$
	commutes with $S$ (as in Theorem~\ref{Mainthm.MMalg_unipot}) we call \emph{unipotent-special}, see Definition~\ref{Def.uSUV}.
One can in fact show that the unipotent-special subvarieties are 
irreducible, closed in $G$ and the subset of unipotent-torsion points lie dense in them.
Moreover, the intersection of two unipotent-special subvarieties is the finite union of unipotent-special subvarieties
(see Proposition~\ref{Prop.uSUV}). This makes them good candidates for being ``special subvarieties" analagous to special subvarieties  in the unlikely intersections and the conjecture of Zilber-Pink. For more details we refer the reader to the work of Barroero and Dill, \cite[Theorem 4.1]{barroero2021distinguished}. However we immediately point out that unipotent-special subvarieties do not fit into the framework of distinguished categories, because of the lack of final objects (Axiom (A3)). 

We note that the unipotent-special subvarieties generally are not connected components of algebraic subgroups.
However, in certain situations one can give an explicit description of them. As an example we consider $G = \SL_2(\kk)$:
Here the unipotent-special subvarities different from $\SL_2(\kk)$ 
are given as follows: For every root of unity $t$ the set of 
matrices in $\SL_2(\kk)$ such that  its trace is equal to $t + t^{-1}$ form a unipotent special subvariety.
The unipotent-special subvarities that contain a dense set of torsion points are those whose trace is not equal to $\pm 2$.
One can show with elementary computations that none of these unipotent-special subvarieties is 
a connected component of an algebraic subgroup of $\SL_2(\kk)$.

\smallskip

The proofs of Theorem \ref{Mainthm.MMalg} and \ref{Mainthm.MMalg_unipot} are both by reduction to the commutative case, 
i.e.~to Theorem~\ref{Thm.MM}. 
More precisely, for Theorem~\ref{Mainthm.MMalg} we first consider the case $F \times L $, where $F$ is commutative and $L$ is affine and reductive, see Proposition~\ref{Prop.MMProduct}. This case can be done essentially via intersecting  the subset under consideration with $F \times T$,
where  $T$ is a maximal torus in $L$, similar to the affine reductive case
described directly after Theorem~\ref{Mainthm.MMalg}. The first serious difficulty arises when we replace $L$ by an arbitrary affine algebraic group $H$ (see Proposition~\ref{Prop.MMProduct_gen}), 
because no categorical quotient with respect to conjugation exists.
Moreover, the torison points do not behave well under the quotient by the unipotent radical $R_u(H)$ of $H$ 
(see e.g.~Example~\ref{Exa.Torsion_in_Borel}).
We overcome these obstacles by using a certain stratified version of a geometric quotient with respect to $R_u(H)$-conjugation on $H$, see Lemma~\ref{Lem.Stratification}. In order to get the theorem in the general case (Theorem~\ref{Thm.MMalg}) 
we then use the so-called Rosenlicht decomposition of an algebraic group 
$G$:~there is a canonical homomorphism $G_{\ant} \times G_{\aff} \rightarrow G$ (which might have a 
non-trivial kernel), where $F = G_{\ant}$ is a connected, commutative and $H = G_{\aff}$ is a connected, affine
algebraic subgroup of $G$.
Using the solution to the problem for $F \times H$ provides us then with a solution for $F H = G$.
 For Theorem~\ref{Mainthm.MMalg_unipot}, the proof turns out to be very analogous but the framework is much 
 more natural. Since every closed irreducible subset $X \subseteq G$ that dominates $\hat{G} \aquot \hat{G}$ has dense 
 unipotent-torsion points (see Corollary~\ref{Cor.X_utor_dense}), there is no hope to describe all closed subsets of $G$ with dense unipotent-torsion points.
 In particular, we are forced to divide by the unipotent radical of $G$ and thus this quotient
 gives no obstacle by design of the setup.
 
\bigskip
 
%

For the rest of this introduction, we assume that our ground field $\kk$ is equal to the field of algebraic numbers 
$\overline{\QQ}$.
Another result that is related to a translation from the commutative to the non-commutative setting 
is Breuillard's height gap theorem \cite{breuillardgap} that may be viewed as an analogue of Lehmer's conjecture. He constructs a height on finite sets of matrices that, if restricted to singletons, almost coincides with a height coming from the operator norm \cite{talamancaheight}  (see \cite[Remark 2.20]{breuillardgap}). This in turn coincides with a  Weil-height if restricted to a torus (see \cite[\S1]{breuillardgap}). In this article, we  construct a canonical height on $G$ that is strongly related to Breuillard's height (see \eqref{hG}), if $G$ is affine and with the quadratic Néron-Tate height if $G$ is an abelian variety.
Moreover, it is invariant under $G$-conjugation. 
It has the further upside that it vanishes exactly on the unipotent-torsion points of $G$ (see Lemma~\ref{Lem.Height_u_tor}). 
A knee-jerk reaction is to ask whether, going beyond Theorem~\ref{Mainthm.MMalg_unipot} a Bogomolov-type result, such as \cite{ullmobogomolov, zhangbogomolov} is true with respect to this height. This is our next result. 
\begin{maintheorem}\label{bogomolov}  Let $G$ be a connected algebraic group defined over $\overline{\Q} $. Let $X \subseteq G$, be  an irreducible subset  such that $\pi^{-1}(\overline{\pi(X)})$ is not unipotent-special. 
	Then, for a canonical height $\hat{h}_G$ on $G$, there exists an $\epsilon  = \epsilon(X)> 0$, such that the set 
\[
	\set{g \in \pi^{-1}(\overline{\pi(X)})}{\hat{h}_G(g) < \epsilon}
\]
is not dense in $\pi^{-1}(\overline{\pi(X)})$. 
\end{maintheorem}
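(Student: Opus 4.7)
\medskip

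\textbf{Proof proposal.} The strategy is to push everything through the map $\pi \colon G \to \hat{G} \aquot \hat{G}$ and reduce to the classical Bogomolov conjecture for semi-abelian varieties and for algebraic tori. First I would verify that $\hat{h}_G$ descends along $\pi$. By construction (see the formula \eqref{hG} and Lemma~\ref{Lem.Height_u_tor}), the canonical height is $G$-conjugation invariant, vanishes precisely on $G_{\utor}$, and is additive with respect to the Rosenlicht decomposition. Since the unipotent radical and any conjugacy class have $\hat{h}_G$-value zero on fibres (the former because they consist of unipotent-torsion points, the latter by conjugation invariance), $\hat{h}_G$ factors through a function $\hat{h}$ on $\hat{G}\aquot\hat{G}$ such that $\hat{h}_G = \hat{h}\circ\pi$. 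Setting $Y = \overline{\pi(X)}$, the conclusion is then equivalent to showing that, unless $\pi^{-1}(Y)$ is unipotent-special, the set $\{y\in Y : \hat{h}(y)<\epsilon\}$ is not dense in $Y$ for some $\epsilon>0$.

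Next I would use the structure of $\hat{G} \aquot \hat{G}$. By the Rosenlicht decomposition applied to $\hat{G}$, we have a canonical surjection $A \times L \to \hat{G}$ with finite kernel, where $A$ is a semi-abelian variety (coming from $G_{\ant}$) and $L$ is the reductive quotient of $G_{\aff}$. Passing to the GIT quotient, and using that $L\aquot L$ is identified with $T/W$ for a maximal torus $T \subseteq L$ and the Weyl group $W$, we obtain a finite surjective morphism
\[
	\varphi \colon A \times T \longrightarrow \hat{G}\aquot\hat{G}\,.
\]
The pullback $\varphi^{*}\hat{h}$ is comparable to the sum of the Néron--Tate height on $A$ and the standard Weil height on $T$, because heights on the factors are detected by ambient projective embeddings that, after suitable normalisation, agree with the construction defining $\hat{h}_G$. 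Let $Z \subseteq A \times T$ be any irreducible component of $\varphi^{-1}(Y)$ surjecting onto $Y$; density of low-height points in $Y$ pulls back, through the finite map $\varphi$, to density of low-height points in $Z$.

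At this point I would invoke the Bogomolov conjecture for semi-abelian varieties (David--Philippon, combined with Zhang/Bombieri--Zannier for the toric factor) applied to $Z \subseteq A \times T$: the presence of a dense set of points of height $<\epsilon$ for arbitrarily small $\epsilon>0$ forces $Z$ to be a torsion-translate of a connected algebraic subgroup of $A \times T$, say $Z = (a_0,t_0)\cdot H$ with $(a_0,t_0) \in (A\times T)_{\tor}$. Pushing forward through $\varphi$ and pulling back through $\pi$ gives
\[
	\pi^{-1}(Y) \;=\; \pi^{-1}\bigl(\pi(h_0 S_0)\bigr)
\]
for a commutative connected subgroup $S_0 \subseteq G$ (coming from $H$) and some $h_0 \in G_{\utor}$ lifting the class of $(a_0,t_0)$; that $h_0$ commutes with $S_0$ follows from the fact that the lift of a subgroup of $A \times T$ through $\varphi$ lands in a commutative connected subgroup of $\hat{G}$ (maximal tori and the anti-affine part commute) and then lifts through $G \to \hat{G}$ to something commuting with $S_0$ after adjusting by a unipotent element. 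Hence $\pi^{-1}(Y)$ is unipotent-special, contradicting the hypothesis.

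The main obstacle I expect is the verification that $\varphi^{*}\hat{h}$ really coincides, up to bounded error, with the canonical (Néron--Tate + Weil) height on $A \times T$. Granting the explicit formula for $\hat{h}_G$ used throughout the paper this should be a direct computation on the maximal torus and on the anti-affine part, but one has to handle the Weyl-group quotient and the isogeny $A \times L \to \hat{G}$ carefully so that low-height points really pull back to low-height points. The second delicate point is the translation from ``torsion translate of a subgroup in $A \times T$'' back to ``unipotent-special subvariety of $G$'', which has to be done in such a way that $Z = \varphi^{-1}(Y)$ is not merely contained in, but is exactly what $Y$ pulls back to; this is where the irreducibility of $X$ (hence of $Y$) is used, together with the finiteness of $\varphi$ and of the kernel of $A \times L \to \hat{G}$.
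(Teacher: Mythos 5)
Your proposal follows essentially the same route as the paper: descend to the semi-abelian variety $\hat{S} = \hat{G}_{\ant}\hat{T}$ via the finite quotient $\rho$, apply the David--Philippon Bogomolov theorem there, and translate the non-density back through $\pi$ using Lemma~\ref{Lem.uSUV} and the $W$-invariance of the height. Your worry about verifying that $\varphi^{*}\hat{h}$ matches the N\'eron--Tate-plus-Weil height up to bounded error is moot: the paper \emph{defines} $\hat{h}_G$ by descent of the $W$-symmetrized height $\hat{h}^W_{\hat{S}}$ on $\hat{S}$ (Definitions~\ref{inv1} and~\ref{defheight2}), so the compatibility holds by construction, and by working directly on the semi-abelian variety $\hat{S}$ rather than on a product $A\times T$ the paper also avoids the extra isogeny you invoke.
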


For the exact definition of the height $\hat{h}_G$ 
we refer the reader to Definition~\ref{defheight1}, ~\ref{inv1} and~\ref{defheight2}. 
This canonical height also satisfies a certain Northcott property in the sense that a set of semi-simple elements of bounded canonical height and degree over $\Q$ is contained in  finitely many conjugation classes of semi-simple elements. 

Our proof of Theorem \ref{bogomolov} goes by reducing to the commutative case and applying the main theorem in \cite{davidphilippon} (cf. also~\cite[Proposition~6.1]{kuehnesmall}). The main new input for Theorem C is the  canonical height constructed above and the observation that it vanishes exactly on the unipotent-torsion points.

\subsection{Acknowledgement} Both authors thank Gabriel Dill for pointing out a relevant mathoverflow post that helped to motivate this work. 

\section{Preliminaries}

\subsection{Some structure results about algebraic groups}
\label{Subsect.Structure_results}

\begin{notation}
	Throughout this subsection $G$ denotes a connected algebraic group.
	Moreover, $G_{\aff}$ denotes the largest connected affine algebraic subgroup of $G$ and
	$G_{\ant}$ denotes the smallest normal algebraic subgroup of $G$ such that $G/G_{\ant}$ is affine.
\end{notation}

By \cite[Theorem~16, p. 439]{Ro1956Some-basic-theorem}, $G_{\aff}$ is the unique closed normal 
connected affine subgroup of $G$ such that $G/G_{\aff}$ is abelian.
The subgroup $G_{\ant}$ is connected, anti-affine (i.e.~$\kk[G_{\ant}] = \kk$) and central in $G$
(see \cite[Ch.~III, \S3, Th\'eor\`eme~8.2, Corollaire~8.3]{DeGa1970Groupes-algebrique}), and 
$G$ can be written as $G = G_{\ant} G_{\aff}$ (so called ``Rosenlicht decomposition'', 
see \cite[Corollary~5, p.~440]{Ro1956Some-basic-theorem}). 


Moreover, we will constantly use the fact that for a homomorphism $f \colon G \to H$ of algebraic groups the image
$f(G)$ is closed in $H$, see e.g.~\cite[Exp.~IV$_B$, Proposition~1.2]{2011Schemas-en-groupes}.

\begin{lemma}
	\label{Lem.Homomorphic_images}
	Let $f \colon G \to H$ be a surjective 
	homomorphism of connected algebraic groups. Then $f(G_{\ant})  = H_{\ant}$
	and $f(G_{\aff}) = H_{\aff}$.
\end{lemma}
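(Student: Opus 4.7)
The plan is to verify each equality using the universal characterizations of $G_{\aff}$ and $G_{\ant}$ recalled just before the lemma, together with two easy facts about images under homomorphisms: the image of an affine group is affine, and the image of an anti-affine group is anti-affine (since the coordinate-ring inclusion $\kk[f(G_{\ant})] \hookrightarrow \kk[G_{\ant}] = \kk$ forces $\kk[f(G_{\ant})] = \kk$).

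First I would handle $f(G_{\aff}) = H_{\aff}$ by matching the uniqueness characterization from \cite[Theorem~16]{Ro1956Some-basic-theorem}: $H_{\aff}$ is the unique closed normal connected affine subgroup of $H$ whose quotient is an abelian variety. I would verify that $f(G_{\aff})$ has all four properties. Closedness follows from \cite[Exp.~IV$_B$, Proposition~1.2]{2011Schemas-en-groupes}; connectedness and affineness are preserved by homomorphic images; normality follows from $G_{\aff} \trianglelefteq G$ together with surjectivity of $f$. For the quotient, the composition $G \to H \to H/f(G_{\aff})$ kills $G_{\aff}$, so it factors through a surjection $G/G_{\aff} \twoheadrightarrow H/f(G_{\aff})$; since $G/G_{\aff}$ is an abelian variety, so is its image.

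Next I would prove $f(G_{\ant}) = H_{\ant}$ by two inclusions. For $H_{\ant} \subseteq f(G_{\ant})$, I would use the minimality of $H_{\ant}$ as the smallest normal subgroup of $H$ with affine quotient: the same factorization argument as above yields a surjection $G/G_{\ant} \twoheadrightarrow H/f(G_{\ant})$, and since $G/G_{\ant}$ is affine, so is $H/f(G_{\ant})$, forcing $H_{\ant} \subseteq f(G_{\ant})$. For the reverse inclusion, I would use the general principle that $H_{\ant}$ contains every anti-affine subgroup $A$ of $H$: indeed, the composition $A \hookrightarrow H \twoheadrightarrow H/H_{\ant}$ is a morphism from an anti-affine group to an affine variety and hence constant, so it is trivial, giving $A \subseteq H_{\ant}$. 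Applied to the anti-affine $A = f(G_{\ant})$ this yields $f(G_{\ant}) \subseteq H_{\ant}$.

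I do not expect any serious obstacle here; the whole argument is a clean application of the universal properties of $G_{\aff}$ and $G_{\ant}$. The only small subtlety is making sure the quotient $H/f(G_{\aff})$ actually exists as a variety (this is why normality of $f(G_{\aff})$ in $H$ must be checked separately via the surjectivity of $f$).
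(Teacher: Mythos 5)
Your proof is correct and follows essentially the same route as the paper: both directions for $f(G_{\ant}) = H_{\ant}$ use the anti-affineness of $f(G_{\ant})$ (so it maps trivially to the affine quotient $H/H_{\ant}$) and the surjection $G/G_{\ant} \twoheadrightarrow H/f(G_{\ant})$ showing the latter is affine; and $f(G_{\aff}) = H_{\aff}$ is deduced from the uniqueness characterization by checking that $f(G_{\aff})$ is a closed connected affine normal subgroup with abelian quotient. The paper is simply terser in its verification of the $G_{\aff}$ case, but the ideas match.
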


\begin{proof}
	Since $\kk[G_{\ant}] = \kk$, we get $\kk[f(G_{\ant})] = \kk$. In particular, $f(G_{\ant})$
	maps to the neutral element via $H \to H / H_{\ant}$, as $H / H_{\ant}$ is affine. Hence
	$f(G_{\ant}) \subseteq H_{\ant}$. On the other hand, $f$ descents to a surjection
	$G/G_{\ant} \to H/f(G_{\ant})$ and since $G/G_{\ant}$ is affine, it follows that $H/f(G_{\ant})$ is affine
	(see~e.g.~\cite[Corollary~2, p. 440]{Ro1956Some-basic-theorem}). This shows that $H_{\ant} \subseteq f(G_{\ant})$.
	
	Note that $f(G_{\aff})$ is a connected affine subgroup of $H$.
	As $f$ is surjective, $f(G_{\aff})$ is normal in $H$ and $f$ descents to a surjection
	$G/G_{\aff} \to H/f(G_{\aff})$. Since $G/G_{\aff}$ is abelian, it follows that $H/f(G_{\aff})$ is abelian as well.
	Hence, $f(G_{\aff}) = H_{\aff}$.
\end{proof}

\begin{remark}
	\label{Rem.Homomorphic_images}
	The proof shows: If $f \colon G \to H$ is only a homomorphism of connected algebraic groups, then
	we have still $f(G_{\ant}) \subseteq H_{\ant}$ and $f(G_{\aff})\subseteq H_{\aff}$.
\end{remark}

\begin{lemma}
	\label{Lem.Unipot_Radical} 
	The unipotent radical $R_u(G_{\aff})$ of $G_{\aff}$ is the largest  
	closed normal unipotent subgroup of $G$.
\end{lemma}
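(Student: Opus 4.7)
The plan is to prove the two inclusions: that $R_u(G_{\aff})$ itself is a closed normal unipotent subgroup of $G$, and that every such subgroup sits inside it.

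First I would record that in characteristic zero any unipotent algebraic group is connected (via the exponential isomorphism with its Lie algebra), so that the qualifier ``connected'' is automatic in both the statement and in the characterization of $R_u(G_{\aff})$ as the largest closed connected normal unipotent subgroup of $G_{\aff}$.

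Next I would show $R_u(G_{\aff})$ is normal in $G$. The subgroup $R_u(G_{\aff})$ is characteristic in $G_{\aff}$, i.e.\ preserved by every automorphism of $G_{\aff}$. Since $G_{\aff}$ is normal in $G$ (as recalled at the beginning of the subsection, via \cite{Ro1956Some-basic-theorem}), conjugation by any $g \in G$ restricts to an automorphism of $G_{\aff}$, hence stabilizes $R_u(G_{\aff})$. As $R_u(G_{\aff})$ is obviously closed and unipotent, this gives one inclusion.

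For the converse, let $U$ be any closed normal unipotent subgroup of $G$. I would first argue $U \subseteq G_{\aff}$: composing the inclusion with the quotient $G \to G/G_{\aff}$ yields a homomorphism from the affine group $U$ into the abelian variety $G/G_{\aff}$; since any morphism from a connected affine group to an abelian variety is constant (image is simultaneously affine, connected and proper, hence a point), $U$ lies in the kernel, which is $G_{\aff}$. Then $U$ is a closed, connected (by the char.\ $0$ remark), normal, unipotent subgroup of $G_{\aff}$, and so by the maximality property defining the unipotent radical we get $U \subseteq R_u(G_{\aff})$.

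No step looks genuinely hard: the only point that needs care is the normality of $R_u(G_{\aff})$ in the ambient group $G$, which is handled by the characteristic-subgroup argument combined with normality of $G_{\aff}$ in $G$; the containment $U\subseteq G_{\aff}$ is where the characteristic-zero, Rosenlicht-type structure of $G$ is actually used.
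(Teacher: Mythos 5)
Your proof is correct, and the overall skeleton (show $R_u(G_{\aff})$ is itself a closed normal unipotent subgroup of $G$, then show every such subgroup lands in $G_{\aff}$ and hence in $R_u(G_{\aff})$) matches the paper's. Two small differences are worth flagging. First, the paper doesn't bother to verify explicitly that $R_u(G_{\aff})$ is normal in $G$; your characteristic-subgroup argument is the standard way to make that precise and is fine to spell out. Second, for the containment $N \subseteq G_{\aff}$, the paper goes via Rosenlicht: $N$ unipotent $\Rightarrow$ solvable $\Rightarrow$ affine (citing \cite[Corollary~4, p.\ 440]{Ro1956Some-basic-theorem}), then invokes that $G_{\aff}$ is the largest connected affine subgroup; you instead note that unipotent groups are affine by definition and re-derive the containment by pushing $N$ into the abelian variety $G/G_{\aff}$, where an affine, connected, proper image must be trivial. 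Your route is a hair more self-contained and avoids the solvable-intermediate step, while the paper's route is shorter given the Rosenlicht citation; both are sound and rest on the same underlying structure theory. The characteristic-zero remark (unipotent $\Rightarrow$ connected) is implicitly used by the paper as well and you are right to make it explicit.
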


\begin{proof}
	Let $N \subseteq G$ be a closed normal unipotent subgroup of $G$. 
	As $N$ is solvable, it is affine (see \cite[Corollary~4, p. 440]{Ro1956Some-basic-theorem}).
	Hence $N$ is contained in $G_{\aff}$ and thus $N \subseteq R_u(G_{\aff})$.
\end{proof}

\begin{lemma}
	\label{Lem.Aff_Ant_of_GmodR_uG}
	Let $\hat{G} \coloneqq G / R_u(G_{\aff})$. Then $\hat{G}_{\ant}$ is a semi-abelian variety (i.e.~an algebraic group 
	extension of an abelian variety by some torus) and $\hat{G}_{\aff}$ is an affine reductive group (i.e.~$R_u(\hat{G}_{\aff})$ is trivial).
\end{lemma}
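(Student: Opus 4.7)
The plan is to treat $\hat{G}_{\aff}$ and $\hat{G}_{\ant}$ separately, in each case using Lemma~\ref{Lem.Homomorphic_images} to identify the piece as the image, under the quotient map $\pi \colon G \to \hat{G}$, of the corresponding piece of $G$.

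For the affine part, reductivity is essentially by design. The restriction $\pi|_{G_{\aff}}$ has kernel $R_u(G_{\aff})$, so Lemma~\ref{Lem.Homomorphic_images} yields
\[
	\hat{G}_{\aff} \;=\; \pi(G_{\aff}) \;\cong\; G_{\aff}/R_u(G_{\aff}),
\]
and this quotient is reductive by the defining property of the unipotent radical.

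For the anti-affine part, Lemma~\ref{Lem.Homomorphic_images} gives $\hat{G}_{\ant} = \pi(G_{\ant})$. The dominance of $\pi|_{G_{\ant}}$ induces an injection $\kk[\hat{G}_{\ant}] \hookrightarrow \kk[G_{\ant}] = \kk$, so $\hat{G}_{\ant}$ remains anti-affine; moreover, it remains central in $\hat{G}$ because $G_{\ant}$ is central in $G$. Being commutative and connected, its affine part decomposes in characteristic zero as $(\hat{G}_{\ant})_{\aff} = T \times U$, with $T$ a torus and $U$ a connected unipotent (hence vector) group. Thus establishing that $\hat{G}_{\ant}$ is a semi-abelian variety is equivalent to showing $U = 1$.

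To achieve this, observe that $U \subseteq \hat{G}_{\ant}$ sits inside a subgroup that is central in $\hat{G}$, so $U$ is automatically normal in $\hat{G}$; being also closed and unipotent, Lemma~\ref{Lem.Unipot_Radical} applied to $\hat{G}$ forces $U \subseteq R_u(\hat{G}_{\aff})$. But the first step already showed $R_u(\hat{G}_{\aff}) = 1$, whence $U = 1$ and $(\hat{G}_{\ant})_{\aff} = T$, so $\hat{G}_{\ant}$ is indeed semi-abelian. The only conceptual move --- and the only place where one must be careful --- is the observation that, thanks to the centrality of $\hat{G}_{\ant}$ in $\hat{G}$, the unipotent summand $U$ becomes normal in \emph{all} of $\hat{G}$, which is exactly what makes Lemma~\ref{Lem.Unipot_Radical} applicable in the ambient group rather than only in $\hat{G}_{\ant}$.
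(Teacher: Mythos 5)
Your proof is correct, and your treatment of the affine part coincides with the paper's. For the semi-abelian part, however, you take a genuinely different route. The paper works on the source side: it restricts $p$ to $(G_{\ant})_{\aff}$, identifies the kernel of the resulting surjection $q \colon (G_{\ant})_{\aff} \to (\hat G_{\ant})_{\aff}$ with $R_u\bigl((G_{\ant})_{\aff}\bigr)$ (using Lemma~\ref{Lem.Unipot_Radical} applied to the commutative group $G_{\ant}$), and concludes directly that the image is a torus. You instead work on the target side: you decompose $(\hat G_{\ant})_{\aff} = T \times U$, note that $U$ is normal in $\hat G$ because $\hat G_{\ant}$ is central, apply Lemma~\ref{Lem.Unipot_Radical} to $\hat G$ to get $U \subseteq R_u(\hat G_{\aff})$, and then invoke the reductivity of $\hat G_{\aff}$, which you have already established, to conclude $U = 1$. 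This makes your second half logically dependent on your first half, while the paper's two halves are independent; in exchange, you avoid having to match up the kernel of $q$ with a unipotent radical, which is the small amount of bookkeeping the paper's proof carries. Both arguments rest on the same two lemmas, so neither is strictly more economical, but yours has the pleasant feature that the key step (killing $U$) is a one-line appeal to the reductivity just proved, and it highlights cleanly where centrality of the anti-affine part enters.
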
	
	 
\begin{proof}
	By Lemma~\ref{Lem.Homomorphic_images}, the canonical projection $p \colon G \to \hat{G}$ restricts
	to a surjection $q \colon (G_{\ant})_{\aff} \to (\hat{G}_{\ant})_{\aff}$ with $\ker(q) = R_u(G_{\aff}) \cap (G_{\ant})_{\aff}$.
	As $R_u((G_{\ant})_{\aff})$ is the largest unipotent subgroup of $G_{\ant}$
	(by Lemma~\ref{Lem.Unipot_Radical} applied to the commutative group $G_{\ant}$), we get
	$R_u((G_{\ant})_{\aff}) = \ker(q)$. Thus $(\hat{G}_{\ant})_{\aff} \simeq  (G_{\ant})_{\aff}/\ker(q)$ is  a torus.
	Now, $\hat{G}_{\ant}$ is semi-abelian, as an extension of the abelian variety $\hat{G}_{\ant}/(\hat{G}_{\ant})_{\aff}$ by the torus $(\hat{G}_{\ant})_{\aff}$.
	
	Again by Lemma~\ref{Lem.Homomorphic_images}, $p \colon G \to \hat{G}$ restricts to a surjection  $G_{\aff} \to \hat{G}_{\aff}$
	with kernel $R_u(G_{\aff})$. Hence $\hat{G}_{\aff}$ is affine and reductive.
\end{proof}	 
	 
For the rest of this subsection, we focus on the case, where $G_{\ant}$ is semi-abelian.

\begin{definition}
	Assume that $G_{\ant}$ is semi-abelian.
	We call an element $g \in G$ \emph{semisimple} if $g = g_1g_2$, where 
	$g_1 \in G_{\ant}$ and $g_2  \in G_{\aff}$ is semisimple (in the usual sense).
\end{definition}

If $f \colon G \to H$ is a homomorphism of connected algebraic groups where $G_{\ant}$ and $H_{\ant}$ are semi-abelian,
then for every semisimple $g \in G$ the image $f(g)$ is semisimple in $H$, since
$f(G_{\ant}) \subseteq H_{\ant}$ and $f(G_{\aff}) \subseteq H_{\aff}$ 
(see Remark~\ref{Rem.Homomorphic_images}).

Moreover, we have the following description of the torsion points:

\begin{lemma}
	\label{Lem.torsion_points}
	Assume that $G_{\ant}$ is semi-abelian and fix a maximal torus $T \subseteq G_{\aff}$.
	Then, the conjugates of the
	\begin{enumerate}[leftmargin=*, label=\alph*)]
	\item \label{Lem.torsion_points1}  elements in $G_{\ant}T$ are exactly the semisimple elements in $G$.
	\item \label{Lem.torsion_points2}  torsion points in $G_{\ant}T$ are exactly the torsion points in  $G$.
	\end{enumerate}
\end{lemma}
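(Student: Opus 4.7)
The plan is to prove (a) first and then reduce (b) to (a) by showing that every torsion element of $G$ is automatically semisimple in the sense of the preceding definition; the other direction of (b) is trivial because conjugation preserves torsion.

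For (a), I would start with the easy direction: given $g_0 = a t \in G_{\ant} T$ (with $a \in G_{\ant}$ and $t \in T$) and any $h \in G$, decompose $h = h_1 h_2$ via Rosenlicht with $h_1 \in G_{\ant}$ central and $h_2 \in G_{\aff}$. Centrality kills the $h_1$-contribution, so $h g_0 h^{-1} = a \cdot (h_2 t h_2^{-1})$, and since $h_2 t h_2^{-1}$ is a conjugate of a semisimple element inside $G_{\aff}$, it remains semisimple; hence $h g_0 h^{-1}$ is semisimple by definition. For the converse direction, let $g = g_1 g_2$ be semisimple, so $g_2 \in G_{\aff}$ is semisimple in the classical sense. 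Invoking the standard fact that every semisimple element of a connected affine algebraic group lies in some maximal torus together with the conjugacy of all maximal tori of $G_{\aff}$, one obtains $h \in G_{\aff}$ with $h g_2 h^{-1} \in T$; centrality of $g_1$ then yields $h g h^{-1} = g_1 \cdot (h g_2 h^{-1}) \in G_{\ant} T$.

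For (b), the non-trivial inclusion requires showing that every torsion $g \in G$ is semisimple. Writing $g = g_1 g_2$ with $g_1 \in G_{\ant}$ central and $g_2 \in G_{\aff}$, the identity $g^n = g_1^n g_2^n = e$ forces $g_2^n = g_1^{-n}$ to lie in $G_{\ant} \cap G_{\aff}$. I would then argue that every element of $G_{\ant} \cap G_{\aff}$ is semisimple when viewed in $G_{\aff}$: compatibility of Jordan decomposition with closed subgroups places the unipotent part of such an element inside $G_{\ant}$, but $G_{\ant}$ is semi-abelian, so any unipotent element of $G_{\ant}$ must lie in its largest connected affine subgroup $(G_{\ant})_{\aff}$, which is a torus and thus has no nontrivial unipotent elements. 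Hence $g_2^n$ is semisimple, and in characteristic zero this propagates back via Jordan decomposition to force $g_2$ itself to be semisimple. Applying part (a) then produces $h \in G$ with $h g h^{-1} \in G_{\ant} T$, which remains torsion.

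The principal technical hurdle is the analysis of $G_{\ant} \cap G_{\aff}$: one must exploit the semi-abelian structure of $G_{\ant}$ together with the absence of nontrivial unipotent elements in a semi-abelian variety, in order to conclude that elements of this intersection are semisimple in $G_{\aff}$. Once this is in place, both parts follow cleanly from the Rosenlicht decomposition, classical conjugacy statements for maximal tori, and compatibility of Jordan decomposition with closed subgroup inclusions.
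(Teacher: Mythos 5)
Your proof is correct and follows essentially the same route as the paper: part~(a) reduces to the fact that semisimple elements of $G_{\aff}$ are exactly the $G_{\aff}$-conjugates of elements of $T$ (using centrality of $G_{\ant}$), and part~(b) reduces to showing $g_2$ is semisimple via $g_2^n = g_1^{-n} \in G_{\ant}\cap G_{\aff}$. The only stylistic deviation is in the key step of~(b): the paper first enlarges $n$ to land $g_2^n$ in the identity component $(G_{\ant}\cap G_{\aff})^\circ$, observes that this is a subgroup of the torus $(G_{\ant})_{\aff}$ and hence is a torus, and concludes $g_2^n$ is semisimple; you instead invoke the compatibility of the Jordan decomposition with closed subgroups to show that the unipotent part of any element of $G_{\ant}\cap G_{\aff}$ is a unipotent element whose closure is a connected affine subgroup of $G_{\ant}$, hence lies in the torus $(G_{\ant})_{\aff}$ and is trivial. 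Both arguments are valid, use the same underlying facts (Rosenlicht decomposition, semi-abelianity of $G_{\ant}$, the Jordan decomposition, conjugacy of maximal tori), and yours avoids the "enlarge $n$" step at the cost of a slightly more careful Jordan-decomposition argument.
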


\begin{proof}
	\ref{Lem.torsion_points1}: This follows from the fact that the semisimple elements of $G_{\aff}$
	are exactly the conjugates of the elements in $T$, see \cite[Theorem~22.2]{Hu1975Linear-algebraic-g}.
	
	\ref{Lem.torsion_points2}:
	Let $g \in G$ be a torsion point and write $g = g_1 g_2$, where $g_1 \in G_{\ant}$ and $g_2 \in G_{\aff}$.
	Then there exists $n > 0$ such that $1 = g^n = g_1^n g_2^n$. By possibly enlarging $n$,
	we may assume that $g_1^{-n} = g_2^n \in (G_{\ant} \cap G_{\aff})^\circ \eqqcolon F$.
	Since $F$ is a torus, it follows that $g_2^n$ is semisimple. Using the classical Jordan decomposition of 
	$g_2$ in $G_{\aff}$ yields that $g_2$ is semisimple. Hence, there exists $a \in G_{\aff}$ with $ag_2 a^{-1} \in T$
	and thus $g$ is conjugated to $g_1  ag_2 a^{-1} \in G_{\ant} T$. 
\end{proof}

We finish this subsection with a description of the Jordan decomposition in case $G_{\ant}$ is semi-abelian.
Note that we don't have such a Jordan decomposition in general, even not in case when $G$ is commutative.

\begin{lemmadef}[Jordan decomposition]
	\label{Lem.Jordan}
	Assume that $G_{\ant}$ is semi-abelian.
	Let $g \in G$. Then there exist unique $g_u, g_s \in G$ such that  $g = g_u g_s$, $g_u$ is unipotent, 
	$g_s$ is semisimple and  $g_u, g_s$ commute. 
	Moreover, if $g$ commutes with some $b \in G$, then $g_s$ and $g_u$ commute with $b$ as well.
	
	The elements $g_u$ and $g_s$ are called \emph{unipotent part} and \emph{semisimple part} of $g$, respectively.
\end{lemmadef}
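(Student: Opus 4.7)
The plan is to combine the Rosenlicht decomposition $G = G_{\ant} G_{\aff}$ with the classical Jordan decomposition inside the affine group $G_{\aff}$, exploiting that $G_{\ant}$ is central. For existence, I would write $g = g_1 g_2$ with $g_1 \in G_{\ant}$ and $g_2 \in G_{\aff}$ and apply the classical Jordan decomposition to $g_2$ in the affine group $G_{\aff}$, obtaining $g_2 = u v$ with $u$ unipotent, $v$ semisimple, and $uv = vu$. I would then set $g_u := u$ and $g_s := g_1 v$. Centrality of $G_{\ant}$ (and thus of $g_1$) immediately gives $g = g_u g_s$, and $g_s$ is semisimple in the sense of our definition since $g_1 \in G_{\ant}$ and $v \in G_{\aff}$ is semisimple. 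Commuting $g_u$ and $g_s$ again follows because $g_1$ is central.

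For the commutation statement, write any $b \in G$ commuting with $g$ as $b = b_1 b_2$ with $b_1 \in G_{\ant}$, $b_2 \in G_{\aff}$. Centrality of $b_1$ and $g_1$ reduces $gb = bg$ to $g_2 b_2 = b_2 g_2$ in $G_{\aff}$. The classical Jordan decomposition then shows that both $u$ and $v$ commute with $b_2$, and centrality of $g_1$ yields that $g_u = u$ and $g_s = g_1 v$ commute with $b = b_1 b_2$.

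For uniqueness, suppose $g = g_u g_s = g'_u g'_s$ are two such decompositions. Since $g'_u$ commutes with $g'_s$, it commutes with $g$, so by the commutation statement just proved (for our constructed decomposition), $g'_u$ commutes with both $g_u$ and $g_s$; similarly for $g'_s$. Rearranging $g_u g_s = g'_u g'_s$ then gives $g_u (g'_u)^{-1} = g'_s g_s^{-1}$, whose left-hand side is a product of commuting unipotent elements of $G_{\aff}$ (hence unipotent) and whose right-hand side is semisimple in $G$ (using centrality of $G_{\ant}$ to separate the $G_{\ant}$- and $G_{\aff}$-components and that products of commuting semisimples in $G_{\aff}$ are semisimple). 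The proof is then reduced to the key claim: an element $x \in G$ that is simultaneously unipotent and semisimple must be the identity. Unpacking the definitions, $x \in G_{\aff}$ is unipotent and $x = a b$ with $a \in G_{\ant}$, $b \in G_{\aff}$ semisimple; then $a = x b^{-1} \in G_{\ant} \cap G_{\aff}$. I would argue that $G_{\ant} \cap G_{\aff}$ consists of semisimple elements of $G_{\aff}$: since $G_{\ant}$ is semi-abelian and we are in characteristic zero, it contains no non-trivial unipotent element (a closed $\GG_a$ would map to the underlying abelian variety with finite kernel, and a non-trivial unipotent has infinite order, so its Zariski closure cannot be finite). Hence $x = a b$ is a product of two commuting semisimples in $G_{\aff}$ and therefore semisimple in $G_{\aff}$, forcing $x = e$.

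The main obstacle is the last step: controlling the intersection $G_{\ant} \cap G_{\aff}$ and extracting from the semi-abelian hypothesis that it contains no non-trivial unipotent element, which is precisely where the hypothesis enters and why (as noted in the paper) Jordan decomposition fails in general. Everything else is a straightforward bookkeeping using centrality of $G_{\ant}$ and the classical Jordan decomposition in $G_{\aff}$.
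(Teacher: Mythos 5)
Your proof is correct and follows essentially the same route as the paper's: existence via the classical Jordan decomposition in $G_{\aff}$ combined with centrality of $G_{\ant}$, commutation by reducing to the affine factor, and uniqueness by reducing to the fact that a semi-abelian variety has no nontrivial unipotent elements. The only cosmetic difference is that the paper packages the uniqueness step by observing that the element in question lies in the semi-abelian group $G_{\ant}T$, whereas you go through $G_{\ant}\cap G_{\aff}$ and the semisimplicity of its elements — but both hinge on the same key fact about $G_{\ant}$.
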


\begin{proof}
	We will constantly use that $G_{\ant}$ is central in $G$.
	
	Assume that $g = u s$, where $u$ and $s$ are unipotent and semisimple, respectively, and $us = su$. 
	Let $b \in G$ such that $b$ commutes with $g$. Write $b = b_1b_2$ and $s = s_1 s_2$, 
	where $b_1, s_1 \in G_{\ant}$, $b_2, s_2 \in G_{\aff}$ and $s_2$ is semisimple. Then $u s_2 = s_2 u$
	and $b$ commutes with $u s_2$ and hence $b_2$ commutes with $u s_2$. Since $b_2, u, s_2 \in G_{\aff}$,
	it follows that $b_2$ commutes with $u$ and with $s_2$
	(see e.g.~\cite[Lemma~15.1B]{Hu1975Linear-algebraic-g}). Hence, $b$ commutes with $u$ and $s$.
	
	Assume that $g = us = u' s'$, where $u, s$ commute, $u', s'$ commute, $u, u'$ are unipotent and $s, s'$ are semisimple.
	Since $u'$ commutes with $g$, it follows by the previous paragraph that $u'$ commutes with $u$. Analogously,
	$s'$ commutes with $s$. Hence $(u')^{-1} u$ is unipotent and $s' s^{-1}$ is semisimple, i.e.~$s' s^{-1} \in G_{\ant} T$
	for some maximal torus $T \subseteq G_{\aff}$. Since $(u')^{-1} u = s' s^{-1} \in G_{\ant}T$ and since $G_{\ant}T$ is semi-abelian, 
	we get that $u' = u$ and thus
	$s' = s$. This shows the uniqueness of the decomposition.
	
	Write $g = g_1 g_2$, where $g_1 \in G_{\ant}$ and $g_2 \in G_{\aff}$. Let $(g_2)_u$ and $(g_2)_s$ 
	be the unipotent and semisimple part of $g_2$ in $G_{\aff}$,
	respectively, see~\cite[Lemma~15.1B]{Hu1975Linear-algebraic-g}. Then $g = (g_2)_u g_1 (g_2)_s$, $(g_2)_u$ is unipotent, $g_1 (g_2)_s$ is semisimple and 
	$(g_2)_u, g_1 (g_2)_s$ commute.
	This proves the existence of the decomposition.
\end{proof}

\subsection{Conjugation in algebraic groups with trivial unipotent radical}	 
\label{Subsect.Conjugation}
	%

\begin{notation}
	Throughout this subsection $G$ denotes a connected algebraic group such that the unipotent radical
	$R_u(G_{\aff})$ is trivial. Let $Z \subseteq G$ be the connected component of the center of $G$,
	denote by $G' \coloneqq [G, G] = [G_{\aff}, G_{\aff}] \subseteq G_{\aff}$, the commutator subgroup of $G$ and let
	$\mu \colon Z \times G'  \to G$ be the homomorphism induced by multiplication. 
	We fix a maximal torus $T'$ in $G'$
	and denote by $W' = N_{G'}(T')/T'$ the Weyl group of $G'$ with respect to $T'$, 
	where $N_{G'}(T')$ denotes the normalizer of $T'$ in $G'$. Likewise, we fix a maximal torus $T \subseteq G_{\aff}$  that contains $T'$
	and denote by $W$ the Weyl group of $G$ with respect to $T$.
\end{notation}

Recall that $G_{\ant}$ is a connected semi-abelian group and $G_{\aff}$ is a connected affine reductive group
(see Lemma~\ref{Lem.Aff_Ant_of_GmodR_uG}). Note that $G_{\ant} \subseteq Z$ and also the connected center of $G_{\aff}$
is contained in $Z$. Moreover, $G'$ is either trivial or a semi-simple algebraic group, $\mu \colon Z \times G' \to G$
is surjective and $Z \cap G'$ is finite, see~\cite[Proposition~14.2]{Bo1991Linear-algebraic-g}.

\smallskip

The following proposition gives a strong connection between the quotient by conjugation of $G$ and $G'$.
	 
\begin{prop}
	\label{Prop.Alg_quotient_general}
	The categorial quotient $\pi_{G} \colon G \to G \aquot G$ of  $G$ by conjugation exists.
	Moreover, there exists a (unique) surjective mor\-phism 
	$\kappa \colon Z \times (G' \aquot G') \to G \aquot G$ such  that
	\begin{equation}
		\label{Eq.Alg_quotient_general}
		\begin{gathered}
		\xymatrix@=20pt{
			Z \times G' \ar[d]_-{\id_{Z} \times \pi_{G'}} \ar[r]^-{\mu} &  G
			\ar[d]^-{\pi_{G}}   \\
			Z \times (G' \aquot G') \ar[r]^-{\kappa} & G \aquot G 
		}				
		\end{gathered}
	\end{equation}
	commutes and is cartesian, where $\pi_{G'} \colon G' \to G' \aquot G'$
	denotes the morphism induced by $\kk[G']^{G'} \subseteq \kk[G']$
	and $\kk[G']^{G'}$ is the $\kk$-subalgebra of all function in $\kk[G']$ that are invariant under $G'$-conjugation 
	($\pi_{G'}$ is in fact the categorical quotient, see~\cite[Ch.~13, Theorem~2.4 and Theorem~2.12]{SaRi2017Actions-and-invari}).
\end{prop}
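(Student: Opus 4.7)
\smallskip

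\noindent\textbf{Proof plan.} My approach reduces the quotient problem to the reductive subgroup $G'$, exploiting that $Z$ is central. Since $Z$ is central in $G$ and $\mu\colon Z\times G'\to G$ is surjective, the conjugation action of $G$ on itself coincides with the action of $G'$ on $G$ by conjugation through the inclusion $G'\hookrightarrow G$. Moreover, $F\coloneqq\ker\mu=\{(z,z^{-1}):z\in Z\cap G'\}$ is a finite central subgroup of $Z\times G'$ acting by translation, and $G\cong (Z\times G')/F$. Thus on $Z\times G'$ there are two commuting actions: the $G'$-action by conjugation on the second factor (trivial on $Z$), and the translation action of $F$. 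They commute because $Z\cap G'\subseteq Z\subseteq Z(G)$.

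The plan is to take the two quotients in compatible orders. First I form the $G'$-quotient: since $G'$ is semisimple (hence reductive), $\pi_{G'}$ is the categorical quotient, and by a gluing argument on affine opens $U\subseteq Z$ (using the identity $(\kk[U]\otimes\kk[G'])^{G'}=\kk[U]\otimes\kk[G']^{G'}$), the map $\id_Z\times\pi_{G'}$ is the categorical quotient of the combined $G'$-action on $Z\times G'$. The $F$-action descends via $(z_0,z_0^{-1})\cdot(z,\bar h)=(z_0 z,\,z_0^{-1}\cdot\bar h)$, where $z_0\in Z\cap G'\subseteq Z(G')$ acts on $G'\aquot G'$ through left translation on $G'$. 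Since $F$ is finite, the geometric quotient $(Z\times(G'\aquot G'))/F$ exists, and I define $G\aquot G$ to be this quotient and $\kappa$ the (finite, surjective) quotient map. The morphism $\pi_G$ is then uniquely determined so the diagram commutes, and its universal property for $G$-invariant morphisms out of $G$ follows by pulling back along $\mu$: any $G$-invariant $\varphi\colon G\to Y$ gives a $G'$- and $F$-invariant map $Z\times G'\to Y$, which factors through both quotient steps.

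It remains to verify that \eqref{Eq.Alg_quotient_general} is cartesian, i.e.~that the natural map
\[
	Z\times G'\longrightarrow G\times_{G\aquot G}\bigl(Z\times(G'\aquot G')\bigr),\qquad (z,h)\mapsto \bigl(zh,\,(z,\pi_{G'}(h))\bigr),
\]
is an isomorphism. Its inverse sends $(g,(z,\bar h))\mapsto (z,z^{-1}g)$, and I must check $z^{-1}g\in G'$: if $\pi_G(g)=\kappa(z,\bar h)$, then writing $g=z_g h_g$ with $z_g\in Z$, $h_g\in G'$, the equality in $G\aquot G$ forces $(z,\bar h)=(z_0z_g,z_0^{-1}\cdot\pi_{G'}(h_g))$ for some $z_0\in Z\cap G'$, whence $z^{-1}g=z_0^{-1}h_g\in G'$. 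The main obstacle I foresee is that $G$ itself is not affine in general, so one cannot directly apply reductive GIT on $G$; this is circumvented by performing all quotient operations on $Z\times G'$, where the $G'$-action is reductive on each affine piece, and then descending along the finite cover $\mu$.
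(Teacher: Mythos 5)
Your proof follows essentially the same strategy as the paper: it exploits $G \cong (Z\times G')/F$ with $F=\{(z,z^{-1}):z\in Z\cap G'\}=\ker\mu$ finite and central, first forms the $G'$-conjugation quotient $\id_Z\times\pi_{G'}\colon Z\times G'\to Z\times(G'\aquot G')$, then quotients by the residual $F$-action to define $G\aquot G$ and $\kappa$, and obtains the universal property by factoring $\varphi\circ\mu$ successively through the two quotient steps. The only real divergence is in the cartesian-ness verification. The paper notes that $\mu$ and $\kappa$ are both principal $F$-bundles and $\id_Z\times\pi_{G'}$ is $F$-equivariant, so cartesian-ness is a formal consequence of Serre's theory of principal fibre spaces (a map of principal bundles over a map of bases identifies the source with the pullback bundle). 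You instead write down the inverse $(g,(z,\bar h))\mapsto(z,z^{-1}g)$ by hand; this is correct, but you should also record that $\pi_{G'}(z^{-1}g)=\bar h$ (which follows by the same computation giving $z^{-1}g=z_0^{-1}h_g$), and you should note why the set-theoretic inverse is a morphism into the closed subvariety $Z\times G'$ --- e.g.\ $\kappa$ is finite \'etale since $F$ acts freely, so the fibre product $G\times_{G\aquot G}(Z\times(G'\aquot G'))$ is finite \'etale over the variety $G$ and in particular reduced, whence the pointwise containment in $Z\times G'$ upgrades to a scheme-theoretic factorisation. With that small addition your argument is complete and buys a more hands-on picture of the fibre product, at the cost of a bit of scheme-theoretic bookkeeping that the principal-bundle formalism handles automatically.
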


\begin{remark}
	\label{Rem.Alg_quotient_general}
	The proof will show that there is an action of $S \coloneqq Z \cap G'$ on
	$Z \times (G' \aquot G')$ such that $\kappa$ is a principal $S$-bundle with respect 
	to this $S$-action and $\id_{Z} \times \pi_{G'}$ is $S$-equivariant
	with respect to this action and the following $S$-action on $Z \times G'$:
	\[
	s \cdot (z, g) \coloneqq (z s, s^{-1} g) \quad 
	\textrm{for all $s \in S$,  $(z, g) \in Z \times G'$} \, .
	\]
\end{remark}

\begin{proof}[Proof of Proposition~\ref{Prop.Alg_quotient_general}]
	Let $S \coloneqq Z \cap G' \subseteq G$. Then $\mu$ is a principal $S$-bundle 
	by the $S$-action $s \cdot (z, g) \coloneqq (z s, s^{-1} g)$ on $Z \times G'$.
	Note that
	\[
		\pi_{G'}(s h g h^{-1}) = \pi_{G'}(h s g h^{-1}) = \pi_{G'}(s g)
	\]
	for all $s \in S$,  $h,  g \in G'$.
	Hence there exists an $S$-action $\varepsilon$ on $G' \aquot G'$ 
	such that 
	\[
		\pi_{G'}(s g) = \varepsilon(s, \pi_{G'}(g)) \quad \textrm{for all $s \in S$, $g \in G'$}
	\] 
	and thus $\pi_{G'}$ is $S$-equivariant. 
	By \cite[Proposition~4]{Se1958Espaces-fibres-alg}, there exists
	the geometric quotient
	\[
		\kappa \colon Z \times (G' \aquot G') \to Z \times^S (G' \aquot G') 
		\eqqcolon G \aquot G
	\]
	by the $S$-action $s \cdot (z, y) \coloneqq (z s, \varepsilon(s^{-1}, y))$ 
	on $Z \times (G' \aquot G')$
	and in fact $\kappa$ is a principal $S$-bundle. Moreover,  
	\[
		q \coloneqq \id_{Z} \times \pi_{G'}
	\]
	is $S$-equivariant with respect to the above constructed $S$-actions.
	Thus $\kappa \circ q$ is $S$-invariant and hence there exists a unique
	morphism $\pi_{G} \colon G \to G \aquot G$ such that~\eqref{Eq.Alg_quotient_general} commutes.
	Moreover, since $\mu$ and $\kappa$ are principal $S$-bundles and since $q$ is $S$-equivariant,
	it follows that~\eqref{Eq.Alg_quotient_general} is cartesian, see \cite[\S3.1]{Se1958Espaces-fibres-alg}.
	
	Now, we show that $\pi_G \colon G \to G \aquot G$ is the categorial quotient of $G$ by conjugation.
	Since $q$ is invariant under conjugation, it follows by the commutativity of~\eqref{Eq.Alg_quotient_general} that
	$\pi_G \circ \mu$ is invariant under conjugation. Since $\mu$ is a surjective group homomorphism, 
	$\pi_G$ is invariant under conjugation.
	
	Let $\varphi \colon G \to V$ be a morphism that is invariant under conjugation. Then 
	$\varphi \circ \mu$ is invariant under conjugation, since $\mu$ is a group homomorphism.
	In particular, there exists a morphism 
	\[
		\psi \colon Z \times (G' \aquot G') \to W
	\] 
	such that $\psi \circ q = \varphi \circ \mu$. Since $q$ is surjective and $S$-equivariant and since
	$\mu$ is $S$-invariant, it follows that $\psi$ is $S$-invariant. Hence, there exists a morphism
	$\theta \colon G \aquot G \to Z$ such that $\psi = \theta \circ \kappa$. This implies that
	\[
		\varphi \circ \mu = \psi \circ q =  \theta \circ \kappa \circ q = \theta \circ \pi_{G} \circ \mu \, .
	\]
	By the surjectivity of $\mu$ we get thus $\varphi = \theta \circ \pi_{G}$. This shows that
	$\pi_{G}$ is a categorical quotient of $G$ by conjugation.
\end{proof}

We denote for a group $H$ and $h \in H$ the conjugation class of $h$ in $H$ by $C_H(h)$.
The following lemma is evident:

\begin{lemma}
	\label{Lem.Conj_class_general}
	Let $(z, g) \in Z \times G'$. Then we have
	\[
			C_G(zg) = z C_{G'}(g) = \mu(\{z\} \times C_{G'}(g))   \, .	
	\]
\end{lemma}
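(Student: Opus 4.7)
The plan is to unwind the definition of conjugacy class and exploit two facts: that $z \in Z$ is central in $G$, and that $\mu \colon Z \times G' \to G$ is surjective (as recalled just before Proposition~\ref{Prop.Alg_quotient_general}).

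First, since $z$ is central in $G$, for every $h \in G$ one has $h(zg)h^{-1} = z\, hgh^{-1}$, so that
\[
C_G(zg) = z \cdot C_G(g) \, .
\]
Also $\mu(\{z\} \times C_{G'}(g)) = z C_{G'}(g)$ by definition of $\mu$. Hence the statement reduces to showing $C_G(g) = C_{G'}(g)$ for $g \in G'$.

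The inclusion $C_{G'}(g) \subseteq C_G(g)$ is immediate since $G' \subseteq G$. For the reverse inclusion, given any $h \in G$, by surjectivity of $\mu$ we may write $h = z' g'$ with $z' \in Z$ and $g' \in G'$; since $z'$ is central,
\[
h g h^{-1} = z' g' g (g')^{-1} (z')^{-1} = g' g (g')^{-1} \in C_{G'}(g) \, .
\]
This establishes the remaining inclusion and hence the lemma. There is no real obstacle here — the argument is purely formal, using only the centrality of $Z$ and the Rosenlicht-type decomposition $G = Z \cdot G'$ provided by the surjectivity of $\mu$.
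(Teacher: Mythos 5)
Your proof is correct, and since the paper merely declares this lemma ``evident'' without giving any argument, your write-up supplies exactly the routine verification the authors had in mind: centrality of $Z$ gives $C_G(zg) = z\,C_G(g)$, and surjectivity of $\mu$ (the Rosenlicht-type decomposition $G = ZG'$) shows $C_G(g) = C_{G'}(g)$.
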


\begin{corollary}
	\label{Cor.Properties_Alg_quotient_general}
	The categorial quotient $\pi_G \colon G \to G \aquot G$
	is surjective, $G \aquot G$ is irreducible and normal, and every fibre of $\pi_G$ contains
	exactly one closed conjugacy class.
\end{corollary}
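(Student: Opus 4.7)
The plan is to transport each of the four claims across the cartesian square~\eqref{Eq.Alg_quotient_general} from the corresponding property of the classical GIT quotient $\pi_{G'}\colon G' \to G' \aquot G'$. Since $G'$ is either trivial or semisimple, it is a reductive group acting on itself by conjugation, so this quotient is known to be surjective with $G' \aquot G'$ irreducible and normal, and with a unique closed $G'$-conjugacy class in each fibre. All four conclusions about $\pi_G$ will be pulled down from these facts.

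For surjectivity and irreducibility I would argue formally. Surjectivity of $\pi_{G'}$ gives surjectivity of $\id_Z \times \pi_{G'}$, and combined with surjectivity of $\mu$ this forces $\pi_G \circ \mu = \kappa \circ (\id_Z \times \pi_{G'})$ to be surjective, whence $\pi_G$ is surjective. Connectedness of $G'$ makes $\kk[G']$, and hence its subring $\kk[G']^{G'}$, a domain, so $G' \aquot G'$ is irreducible; the product $Z \times (G' \aquot G')$ is then irreducible, and surjectivity of $\kappa$ yields the irreducibility of $G \aquot G$.

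For normality, $Z$ is smooth as an algebraic group and $G' \aquot G'$ is normal by the classical result that the GIT quotient of a smooth variety by a reductive group is normal; hence $Z \times (G' \aquot G')$ is normal. Since $S = Z \cap G'$ is finite, the principal $S$-bundle $\kappa$ is étale and faithfully flat, and normality descends along faithfully flat morphisms, so $G \aquot G$ is normal.

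The fibre statement is the most delicate step. Fix $y \in G \aquot G$ and choose a preimage $(z_0, w_0) \in Z \times (G' \aquot G')$ of $y$ under $\kappa$. Unwinding the $S$-torsor structure of $\kappa$ from Remark~\ref{Rem.Alg_quotient_general}, together with the identity $\pi_{G'}^{-1}(\varepsilon(s^{-1}, w_0)) = s^{-1}\,\pi_{G'}^{-1}(w_0)$, the cartesianness of~\eqref{Eq.Alg_quotient_general} will identify
\[
\pi_G^{-1}(y) \;=\; \mu\bigl((\id_Z \times \pi_{G'})^{-1}(\kappa^{-1}(y))\bigr) \;=\; z_0 \cdot \pi_{G'}^{-1}(w_0),
\]
the $S$-translates of $(z_0, w_0)$ all producing the same subset after applying $\mu$. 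By Lemma~\ref{Lem.Conj_class_general}, the $G$-conjugacy classes inside $z_0 \cdot \pi_{G'}^{-1}(w_0)$ are exactly the translates $z_0 \cdot D$, as $D$ ranges over the $G'$-conjugacy classes in $\pi_{G'}^{-1}(w_0)$; and $z_0 \cdot D$ is closed in $G$ if and only if $D$ is closed in $G'$, translation by $z_0$ being a homeomorphism. The classical GIT result for $\pi_{G'}$ then gives exactly one closed $G$-conjugacy class in $\pi_G^{-1}(y)$. The main obstacle is precisely this identification of the fibre: one must track the $S$-equivariance carefully to verify that different choices of preimage of $y$ under $\kappa$ do not enlarge the resulting subset of $G$.
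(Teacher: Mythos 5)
Your proof is correct and follows essentially the same route as the paper: the properties of $G' \aquot G'$ (normality, irreducibility, and the unique closed conjugacy class in each fibre) are propagated to $G \aquot G$ via the faithfully flat principal $S$-bundle $\kappa$ and the cartesian square~\eqref{Eq.Alg_quotient_general}, with Lemma~\ref{Lem.Conj_class_general} handling the fibre statement. The paper cites textbook references (namely~\cite[Ch.~6, Theorem 4.8]{SaRi2017Actions-and-invari} for the properties of $G' \aquot G'$ and \cite[Proposition~14.57]{GoWe2010Algebraic-geometry} for descent along $\kappa$) where you re-derive the facts, and compresses the fibre identification $\pi_G^{-1}(y) = z_0\,\pi_{G'}^{-1}(w_0)$ that you carry out explicitly.
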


\begin{proof}
	 Since $G' \aquot G'$ is normal and irreducible (see e.g.~\cite[Ch.~6, Theorem 4.8]{SaRi2017Actions-and-invari}) and since
	$\kappa$ in~\eqref{Eq.Alg_quotient_general} of Proposition~\ref{Prop.Alg_quotient_general} (see also Remark~\ref{Rem.Alg_quotient_general}) 
	is a principal bundle (in particular $\kappa$ is faithfully flat), it follows that $G \aquot G$ is normal and irreducible, see e.g.~\cite[Proposition~14.57]{GoWe2010Algebraic-geometry}.
	Since every fibre of $\pi_{G'} \colon G' \to G' \aquot G'$ contains exactly
	one closed conjugacy class (see e.g.~\cite[Ch.~13, Theorem 2.4]{SaRi2017Actions-and-invari}), it follows from Proposition~\ref{Prop.Alg_quotient_general}
	in combination with Lem\-ma~\ref{Lem.Conj_class_general} that every fibre of $\pi_G$ contains exactly one closed
	conjugacy class.
\end{proof}

For the proofs of the next two lemmas we will constantly use the following fact: 
If an element of a reductive group commutes with a maximal torus, then it lies in that maximal torus,
see e.g.~\cite[Corollar~26.2A]{Hu1975Linear-algebraic-g}.

\begin{lemma}
	
	\label{Lem.GantT=ZT'}
	We have $G_{\ant} T = Z T'$ inside $G$.
\end{lemma}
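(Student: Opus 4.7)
The plan is to prove the two inclusions separately, using the Rosenlicht decomposition $G = G_{\ant} G_{\aff}$, the centrality of $G_{\ant}$ in $G$, and two standard facts about reductive groups (which we may invoke because $R_u(G_{\aff})$ is trivial, so $G_{\aff}$ is reductive by Lemma~\ref{Lem.Aff_Ant_of_GmodR_uG}): first, that a maximal torus of a reductive group equals the product of the connected center with a maximal torus of the derived subgroup; second, that an element of a reductive group centralizing a maximal torus lies in that maximal torus.

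For the inclusion $G_{\ant} T \subseteq Z T'$, I would first observe that the connected center $Z(G_{\aff})^\circ$ commutes with $G_{\aff}$ by definition and with $G_{\ant}$ because $G_{\ant}$ is central in $G$; since $G = G_{\ant} G_{\aff}$, this yields $Z(G_{\aff})^\circ \subseteq Z$. Next, noting that $T \cap G'$ is a maximal torus of $G'$ containing $T'$, and hence equals $T'$ by maximality, the structure theorem for reductive groups gives $T = Z(G_{\aff})^\circ \cdot T'$. Combined with $G_{\ant} \subseteq Z$ this yields $G_{\ant} T \subseteq Z \cdot Z(G_{\aff})^\circ \cdot T' \subseteq Z T'$.

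For the reverse inclusion $Z T' \subseteq G_{\ant} T$, let $z \in Z$ and use Rosenlicht to write $z = g_1 g_2$ with $g_1 \in G_{\ant}$ and $g_2 \in G_{\aff}$. Since $g_1 \in G_{\ant}$ is central in $G$, the element $g_2 = g_1^{-1} z$ is also central in $G$; in particular $g_2 \in G_{\aff}$ centralizes the maximal torus $T$ of the reductive group $G_{\aff}$, so $g_2 \in T$. Hence $z \in G_{\ant} T$, giving $Z \subseteq G_{\ant} T$. Since $T' \subseteq T$ and $G_{\ant} T$ is a subgroup of $G$ (because $G_{\ant}$ is central, so $G_{\ant}$ and $T$ commute elementwise), we conclude $Z T' \subseteq G_{\ant} T \cdot G_{\ant} T = G_{\ant} T$.

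The only non-trivial input is the identification $T = Z(G_{\aff})^\circ \cdot T'$, which is essentially bookkeeping once one knows $T \cap G' = T'$; so I do not anticipate a genuine obstacle, and the argument reduces to assembling standard facts about reductive groups together with the centrality of $G_{\ant}$.
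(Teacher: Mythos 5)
Your proof is correct and takes essentially the same approach as the paper: both rely on the decompositions $G = G_{\ant}G_{\aff}$ and $G = ZG'$, the centrality of $G_{\ant}$, and the criterion that an element of a reductive group centralizing a maximal torus lies in that torus. The only minor variation is that for the inclusion $G_{\ant}T \subseteq ZT'$ you pass through the structure identity $T = Z(G_{\aff})^\circ T'$, whereas the paper writes $t \in T$ as $t = zh$ with $z \in Z$, $h \in G'$ and applies the centralizer criterion directly to conclude $h \in T'$; these two arguments are equivalent.
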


\begin{proof}
	Note that $G_{\ant} \subseteq Z$ and by assumption $T' \subseteq T$. 
	
	Let $t' \in T'$. Then there exist
	$a \in G_{\ant}$ and $g \in G_{\aff}$ with $t' = a g$. As $t'$ commutes with all elements of $T$, $g$ commutes with all elements of $T$
	and thus $g \in T$. This implies $t' \in G_{\ant}T$.
	
	On the other hand, let $t \in T$. Then there exist $z \in Z$ and $h \in G'$ with $t = zh$. As $t$ commutes with $T'$, it follows that
	$h$ commutes with $T'$ and thus $h \in T'$. This gives $t \in Z T'$.
\end{proof}

\begin{remark}
	\label{Rem.Jordan}
	Let $g \in G$ and write $g = zg'$ for $z \in Z$ and $g' \in G'$. Then $z (g')_s$ and $(g')_u$ commute, 
	$z (g')_s$ is semisimple (by Lemma~\ref{Lem.torsion_points} and Lemma~\ref{Lem.GantT=ZT'}) and $(g')_u$ is unipotent. Hence
	\[
	g_s = z (g')_s \, .
	\]
\end{remark}

\begin{lemma}
	\label{Lem.GemQuot_by_W_W'}
	The inclusion $N_{G'}(T') \subseteq N_{G_{\aff}}(T)$ induces a group isomorphism $\gamma \colon W' \to W$ and for all $x \in G_{\ant} T = ZT'$ and all $w' \in W'$
	we have
	\begin{equation}
		\label{Eq.Conjugation}
		w' x (w')^{-1} = \gamma(w') x \gamma(w')^{-1} \in G_{\ant}T = ZT' \, . 
	\end{equation}
\end{lemma}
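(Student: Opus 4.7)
The plan is to use the almost-direct-product decomposition of the connected reductive group $G_{\aff} = Z(G_{\aff})^\circ \cdot G'$ together with the resulting equality $T = Z(G_{\aff})^\circ \cdot T'$, plus the centrality of $G_{\ant} \subseteq Z$ in $G$. First I would check the claimed inclusion $N_{G'}(T') \subseteq N_{G_{\aff}}(T)$: any $n \in N_{G'}(T')$ centralizes $Z(G_{\aff})^\circ$ and normalizes $T'$, hence normalizes the product $T = Z(G_{\aff})^\circ \cdot T'$.

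Next I would analyse the composite map $N_{G'}(T') \hookrightarrow N_{G_{\aff}}(T) \twoheadrightarrow W$. Its kernel is $N_{G'}(T') \cap T$, and any element of this intersection lies in $G'$ and centralizes $T' \subseteq T$; since $G'$ is connected semisimple (or trivial), $Z_{G'}(T') = T'$, so the kernel equals $T'$ and $\gamma \colon W' \to W$ is injective. The same reasoning gives $T \cap G' = T'$. For surjectivity, given $n \in N_{G_{\aff}}(T)$ I would write $n = zh$ with $z \in Z(G_{\aff})^\circ \subseteq T$ and $h \in G'$; since $z$ is central in $G_{\aff}$, $h$ and $n$ induce the same conjugation on $T$, so $h \in N_{G_{\aff}}(T) \cap G'$. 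Then $h T' h^{-1} = h(T \cap G') h^{-1} = T \cap G' = T'$, placing $h$ in $N_{G'}(T')$, and $n \equiv h \pmod{T}$, proving surjectivity.

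For equation~\eqref{Eq.Conjugation} I would take a representative $n \in N_{G'}(T')$ of $w'$ and any $x = z_0 t' \in Z T'$ with $z_0 \in Z$, $t' \in T'$, and compute $n x n^{-1} = z_0 \cdot n t' n^{-1}$, where $z_0$ is fixed because it is central in $G$; this expression already lies in $Z T'$ since $n$ normalizes $T'$. By construction of $\gamma$, the same element $n$, viewed inside $N_{G_{\aff}}(T)$, represents $\gamma(w')$, so $\gamma(w') x \gamma(w')^{-1}$ is produced by exactly the same conjugation by $n$, giving the required equality. The main (mild) subtlety is checking that both actions are well defined: for $w'$ this uses that $T'$ is abelian, and for $\gamma(w')$ it uses that replacing $n$ by $nt$ with $t \in T$ changes nothing because $T$ centralizes both $z_0 \in Z$ and $t' \in T' \subseteq T$. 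No deeper obstacle arises; the lemma is essentially a bookkeeping statement once $T \cap G' = T'$ is in hand.
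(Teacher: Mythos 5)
Your proof is correct and follows essentially the same route as the paper: both arguments hinge on the identity $T \cap G' = T'$, on centrality considerations to pass between $T$ and $T'$, and on the observation that a single representative $n$ of $w'$ also represents $\gamma(w')$, so the conjugation action agrees tautologically. The only cosmetic difference is that you work with the decomposition $G_{\aff} = Z(G_{\aff})^\circ G'$ and $T = Z(G_{\aff})^\circ T'$, while the paper uses the decomposition $G = ZG'$ (via the map $\mu$) and deduces $z \in T$ from $z \in Z \cap G_{\aff}$ by the fact that anything in a reductive group commuting with a maximal torus lies in it; these are interchangeable here. You also explicitly verify the inclusion $N_{G'}(T') \subseteq N_{G_{\aff}}(T)$, which the paper takes as given — a small but welcome addition.
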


\begin{proof}
	As the intersection $G' \cap T$ commutes with $T'$ it is equal to $T'$.
	 Hence, the inclusion
	$\iota \colon N_{G'}(T') \to N_{G_{\aff}}(T)$ induces an injective group homomorphism $\gamma \colon W' \to W$.
	If $g \in N_{G_{\aff}}(T)$, the we may write $g =zg'$ with $z \in Z$ and $g' \in G'$. As $G' \cap T = T'$,
	we get $g' T' (g)^{-1} = T'$, i.e.~$g' \in N_{G'}(T')$. As $g, g' \in G_{\aff}$ we get $z \in Z \cap G_{\aff}$ and thus $z \in T$. Hence
	$\gamma(g'T') = gT$, and thus $\gamma$ is surjective.
	
	Note that $G_{\ant}T = ZT' = \mu(Z \times T')$ is a closed algebraic subgroup of $G$.
	As $G_{\ant}$ is central in $G$, $T$ commutes with $G_{\ant}T$ and thus $W$ acts by conjugation on $G_{\ant}T$
	Likewise, $W'$ acts by conjugation on $ZT'$. The equation~\eqref{Eq.Conjugation} is now evident.
\end{proof}

\begin{corollary}
	\label{Cor.Restriction_of_alg_quotient_general}
	The restriction
	\[
	\rho_{G, T} \coloneqq \pi_G |_{G_{\ant}T} \colon G_{\ant}T \to G \aquot G
	\]
	is a geometric quotient for the $W$-action by conjugation on $G_{\ant}T$.
\end{corollary}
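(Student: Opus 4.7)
The plan is to exploit the cartesian square of Proposition~\ref{Prop.Alg_quotient_general} after restricting to maximal tori, and then to apply a standard iterated-quotient principle. First, I would use Lemma~\ref{Lem.GantT=ZT'} to rewrite $G_{\ant}T = ZT'$, and use the isomorphism $\gamma \colon W' \to W$ of Lemma~\ref{Lem.GemQuot_by_W_W'} to reformulate the claim as: $\rho_{G,T}\colon ZT' \to G \aquot G$ is a geometric quotient for the $W'$-action given by the trivial action on $Z$ and conjugation on $T'$.

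Next, since $G'$ is either trivial or semisimple, Chevalley's restriction theorem gives that $\pi_{G'}|_{T'}\colon T' \to G' \aquot G'$ is a geometric quotient for the $W'$-action on $T'$; hence $\id_Z \times \pi_{G'}|_{T'}\colon Z \times T' \to Z \times (G' \aquot G')$ is a geometric $W'$-quotient. Furthermore, the finite central subgroup $S = Z \cap G'$ of $G'$ lies in every maximal torus, so $S \subseteq T'$, and therefore the $S$-action of Remark~\ref{Rem.Alg_quotient_general} preserves $Z \times T' \subseteq Z \times G'$, making $\mu|_{Z \times T'}\colon Z \times T' \to ZT'$ still a principal $S$-bundle. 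Restricting the cartesian square of Proposition~\ref{Prop.Alg_quotient_general} to $Z \times T'$ (noting that $\pi_{G'}(T') = G' \aquot G'$ by Chevalley, so the lower row and $\kappa$ need not be restricted) then yields a commutative diagram
\[
\xymatrix@=20pt{
Z \times T' \ar[d]_-{\id_Z \times \pi_{G'}|_{T'}} \ar[r]^-{\mu|_{Z\times T'}} & ZT' \ar[d]^-{\rho_{G,T}} \\
Z \times (G' \aquot G') \ar[r]^-{\kappa} & G \aquot G
}
\]
in which both horizontal arrows are principal $S$-bundles.

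Two commuting actions are at play on $Z \times T'$: the $S$-action $s\cdot(z,t)=(zs, s^{-1}t)$ from Remark~\ref{Rem.Alg_quotient_general}, and the $W'$-action $w'\cdot(z,t)=(z, w't(w')^{-1})$. They commute because $S$ is central in $G'$ and therefore pointwise fixed by $W'$-conjugation. The $W'$-action descends along $\mu|_{Z\times T'}$ to $ZT'$, where it coincides with conjugation by lifts in $N_{G'}(T')$, i.e.\ with the $W$-action via $\gamma$, by Lemma~\ref{Lem.GemQuot_by_W_W'} and the centrality of $Z$ in $G$. The composition $\rho_{G,T} \circ \mu|_{Z\times T'} = \kappa \circ (\id_Z \times \pi_{G'}|_{T'})$ is a geometric quotient for the combined $S \times W'$-action on $Z \times T'$, being the composition of the geometric $W'$-quotient $\id_Z \times \pi_{G'}|_{T'}$ followed by the principal $S$-bundle $\kappa$.

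Finally, by the standard iterated-quotient principle for commuting actions: since $\mu|_{Z \times T'}$ is a geometric quotient for $S$ and the full composition $\rho_{G,T} \circ \mu|_{Z\times T'}$ is a geometric quotient for $S \times W'$, the induced map $\rho_{G,T}$ is a geometric quotient for the residual $W'$-action on $ZT'$, which is exactly the $W$-action via $\gamma$. The main technical point is checking that restricting the cartesian square of Proposition~\ref{Prop.Alg_quotient_general} to $Z \times T'$ preserves the principal-$S$-bundle structure on the top row; this rests on the inclusion $S \subseteq T'$, which guarantees that $S$-orbits meeting $Z \times T'$ remain inside it.
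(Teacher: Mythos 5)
Your proof shares with the paper the initial reductions: using Lemma~\ref{Lem.GantT=ZT'} and Lemma~\ref{Lem.GemQuot_by_W_W'} to replace $G_{\ant}T$ with the $W$-action by $ZT'$ with the $W'$-action, restricting the cartesian square of Proposition~\ref{Prop.Alg_quotient_general} to $Z \times T'$ using $S = Z \cap G' \subseteq T'$, and appealing to the classical fact (Steinberg in the paper, Chevalley's restriction theorem for you --- interchangeable here) that $\pi_{G'}|_{T'}$ is a geometric $W'$-quotient. Where you diverge is the closing argument. The paper factors $\rho_{G,T}$ through the geometric quotient $\eta\colon ZT' \to ZT'/W'$, shows that $\rho_{G,T}$ is finite and surjective of degree $|W'|$ by faithfully flat descent along $\kappa$ (here the cartesianness of the restricted square is used essentially), and then concludes that the induced map $f \colon ZT'/W' \to G \aquot G$ is a finite degree-one surjection onto a normal irreducible variety, hence an isomorphism by Zariski's Main Theorem. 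You instead invoke a cancellation principle for geometric quotients: since the composite $\kappa \circ (\id_Z \times \pi_{G'}|_{T'}) = \rho_{G,T} \circ \mu|_{Z\times T'}$ is a geometric $(S \times W')$-quotient and $\mu|_{Z\times T'}$ is a geometric $S$-quotient, the induced $\rho_{G,T}$ is a geometric quotient for the residual $W'$-action. This principle is true (one checks directly that the fibres of $\rho_{G,T}$ are $W'$-orbits, that $\rho_{G,T}$ is submersive, and that the invariant sections match), and your observation that the $S$- and $W'$-actions on $Z \times T'$ commute (since $S$, being central in $G'$, lies in $T'$ and is pointwise fixed by $W'$-conjugation) is exactly what makes the stacking of the two quotients applicable. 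The paper's route buys concreteness: it relies only on faithfully flat descent of finiteness/degree and on the normality and irreducibility of $G \aquot G$ (established in Corollary~\ref{Cor.Properties_Alg_quotient_general}), all with precise references. Your route is arguably shorter and avoids Zariski's Main Theorem, but it rests on a ``standard'' cancellation fact for geometric quotients which, in a final write-up, would itself need a proof or a careful citation.
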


\begin{proof}
	By Lemma~\ref{Lem.GemQuot_by_W_W'}, the geometric quotients of $G_{\ant} T = ZT'$ by conjugation with $W$ and $W'$  coincide.
	Thus we may work with conjugation by $W'$ on $ZT'$.
		
	Let $\eta \colon ZT' \to ZT' / W'$ be the geometric quotient with respect to 
	$W'$-conjugation. As $\rho_{G, T}$ is invariant under $W'$-conjugation, there exists a morphism 
	$f \colon ZT' / W' \to G \aquot G$ such that $\rho_{G, T} = f \circ \eta$. We have to show that $f$ is an isomorphism.
	
	Let $S = Z \cap G'$. Then $S$ is contained in $T'$ (as $S$ is central in $G'$, see e.g.~\cite[Corollary~26.2A]{Hu1975Linear-algebraic-g}).
	Hence, $\mu^{-1}(Z T') = Z \times T'$ and thus the cartesian 
	diagram~\eqref{Eq.Alg_quotient_general} in Proposition~\ref{Prop.Alg_quotient_general} restricts
	to the following cartesian diagram
	\begin{equation}
		\label{Eq.Alg_quotient_general_restricted}
		\begin{gathered}
			\xymatrix@=20pt{
				Z \times T' \ar[d]_-{\id_{Z} \times \rho_{G', T'}} \ar[r]^-{\mu |_{Z \times T'}} &  Z T'
				\ar[d]^-{\rho_{G, T}}   \\
				Z \times (G' \aquot G') \ar[r]^-{\kappa} & G \aquot G \, ,
			}				
		\end{gathered}
	\end{equation}
	where $\rho_{G', T'}$ is the restriction of $\pi_{G'}$ to $T'$.
	By \cite[Corollary~6.4]{St1965Regular-elements-o}, $\rho_{G', T'}$ is a geometric quotient of $T'$
	by $W'$-conjugation. As $W'$ acts faithfully on $T'$, we get that $\rho_{G', T'}$ is finite of degree $|W'|$ and surjective.
	As $\kappa$ is faithfully flat, $\rho_{G, T'}$ is finite of degree $|W'|$
	and surjective
	(see e.g. \cite[Proposition~14.51]{GoWe2010Algebraic-geometry}). Since $\eta \colon ZT' \to ZT' / W'$
	is finite of degree $|W'|$, it follows thus that $f \colon ZT' / W' \to G \aquot G$ is finite of degree one and surjective.
	Since $G \aquot G$ is normal and irreducible and since $ZT' / W'$ is irreducible (see Corollary~\ref{Cor.Properties_Alg_quotient_general}), it follows by
	Zariski's main theorem (see~\cite[Corollaire 4.4.9]{Gr1961Elements-de-geomet-III}) that $f$ is an isomorphism. 
\end{proof}

We give now further properties of the categorical quotient $\pi_G \colon G \to G \aquot G$.

\begin{corollary}
	\label{Cor.Conjugacy_classes_general} $ $
	\begin{enumerate}[leftmargin=*]
		\item \label{Cor.Conjugacy_classes_general1} For all $g \in G$, 
		the intersection $G_{\ant}T \cap \pi_G^{-1}(\pi_G(g))$ is exactly one 
		orbit under the conjugacy by the Weyl group $W$ on $G_{\ant}T$.
		\item \label{Cor.Conjugacy_classes_general2} The closed conjugacy classes in 
		$G$ are the semisimple conjugacy classes, i.e.~the conjugacy classes $C_G(s), s \in G_{\ant}T$.
		\item \label{Cor.Conjugacy_classes_general3} For all $g \in G$, the fibre
		$\pi_G^{-1}(\pi_G(g))$ contains exactly one semisimple 
		conjugacy class $C$ and $C$ is contained in $\overline{C_G(g)}$.
		Moreover, $C = C_G(g_s)$ and thus $\pi_G(g) = \pi_G(g_s)$.
		\item \label{Cor.Conjugacy_classes_general4} For $g \in G$ letting 
		$E_{g} \coloneqq \set{u \in G}{\textrm{$u \in G$ unipotent, $u g_s = g_s u$}}$ gives
		\[
		\pi_G^{-1}(\pi_G(g)) =  
		\bigcup_{u \in E_{g}} C_G(u g_s) \, .
		\]
		\item \label{Cor.Conjugacy_classes_general5} Every fibre of $\pi_G \colon G \to G \aquot G$ is irreducible
		and has dimension $\dim G_{\aff} - \rank(G_{\aff})$.
		
		\item \label{Cor.Conjugacy_classes_general6} We have 
		$G_{\ant}T \cap \pi_G^{-1}(\pi_G(X)) = G_{\ant} T \cap X$ for any closed subset $X \subseteq G$ that is 
		stable under conjugation.
	\end{enumerate}
\end{corollary}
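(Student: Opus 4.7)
The plan is to leverage the cartesian square~\eqref{Eq.Alg_quotient_general} together with the Rosenlicht-type factorisation $G = Z \cdot G'$ in order to reduce each of the six assertions to the corresponding classical statement for the reductive group $G'$. The key identity I would establish first is
\[
    \pi_G^{-1}(\pi_G(zg')) \;=\; z \cdot \pi_{G'}^{-1}(\pi_{G'}(g')) \qquad \textrm{for all } z \in Z, \ g' \in G'.
\]
This is obtained by unwinding the cartesian square: if $h = z'h'$ with $z' \in Z$ and $h' \in G'$, then $\pi_G(h) = \pi_G(zg')$ amounts to $(z',\pi_{G'}(h'))$ and $(z,\pi_{G'}(g'))$ lying in a common $S$-orbit (Remark~\ref{Rem.Alg_quotient_general}), forcing $z' = zs$ and $\pi_{G'}(h') = \pi_{G'}(s^{-1}g')$ for some $s \in S = Z\cap G'$, so that $h = z \cdot (sh')$ with $sh' \in \pi_{G'}^{-1}(\pi_{G'}(g'))$.

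From this identity, assertions \ref{Cor.Conjugacy_classes_general3}, \ref{Cor.Conjugacy_classes_general4}, \ref{Cor.Conjugacy_classes_general5} follow by pullback from the corresponding classical facts about fibres of $\pi_{G'}$. For \ref{Cor.Conjugacy_classes_general5}, the fibres of $\pi_{G'}$ are irreducible of dimension $\dim G' - \rank G'$ and multiplication by $z$ is an isomorphism; the dimension formula matches because $G_{\aff}$ is reductive with connected centre a torus meeting $G'$ in a finite set, giving $\dim G_{\aff} - \rank G_{\aff} = \dim G' - \rank G'$. For \ref{Cor.Conjugacy_classes_general3}, Remark~\ref{Rem.Jordan} gives $g_s = z(g')_s$, so the classical equality $\pi_{G'}(g') = \pi_{G'}((g')_s)$ combined with the key identity yields $\pi_G(g) = \pi_G(g_s)$; the classical closure inclusion $C_{G'}((g')_s) \subseteq \overline{C_{G'}(g')}$ transports via Lemma~\ref{Lem.Conj_class_general} to $C_G(g_s) \subseteq \overline{C_G(g)}$, and uniqueness of the semisimple class in the fibre is imposed by Corollary~\ref{Cor.Properties_Alg_quotient_general}. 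For \ref{Cor.Conjugacy_classes_general4}, I would first note that every unipotent element of $G$ already lies in $G'$ (since $G_{\aff}/G'$ is a torus with no non-trivial unipotent element), and then transport the classical union decomposition of the fibre of $\pi_{G'}$ by multiplication by $z$.

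The remaining items are short deductions. For \ref{Cor.Conjugacy_classes_general2}, every semisimple class is of the form $z \cdot C_{G'}(\sigma)$ with $\sigma \in G'$ semisimple (Lemma~\ref{Lem.Conj_class_general}) and hence closed; conversely, Corollary~\ref{Cor.Properties_Alg_quotient_general} asserts that each fibre contains a unique closed class, which by \ref{Cor.Conjugacy_classes_general3} must coincide with the semisimple one. For \ref{Cor.Conjugacy_classes_general1}, Corollary~\ref{Cor.Restriction_of_alg_quotient_general} makes $\rho_{G,T}$ a geometric quotient by $W$-conjugation, so $G_{\ant}T \cap \pi_G^{-1}(\pi_G(g))$ is a single $W$-orbit whenever it is non-empty, and non-emptiness follows from \ref{Cor.Conjugacy_classes_general3} together with Lemma~\ref{Lem.torsion_points}\ref{Lem.torsion_points1}. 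For \ref{Cor.Conjugacy_classes_general6}, given $y \in G_{\ant}T$ with $\pi_G(y) = \pi_G(x)$ for some $x \in X$, I would rewrite this as $\pi_G(y) = \pi_G(x_s)$ via \ref{Cor.Conjugacy_classes_general3}, replace $x_s$ by a $G$-conjugate that lands in $G_{\ant}T$ using Lemma~\ref{Lem.torsion_points}\ref{Lem.torsion_points1}, apply \ref{Cor.Conjugacy_classes_general1} to identify $y$ with a $W$-conjugate thereof, and conclude $y \in X$ since $x_s \in \overline{C_G(x)} \subseteq X$ by \ref{Cor.Conjugacy_classes_general3} combined with the closedness and conjugation-invariance of $X$.

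The main obstacle will be keeping the $S$-torsor ambiguity in the factorisation $g = zg'$ under control when extracting the key fibre identity; once that identity is in place, each part reduces to a clean pullback of a known reductive-group statement.
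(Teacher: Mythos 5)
Your proposal is correct, and while the underlying ingredients (the cartesian square~\eqref{Eq.Alg_quotient_general}, Lemma~\ref{Lem.Conj_class_general}, Corollaries~\ref{Cor.Properties_Alg_quotient_general} and~\ref{Cor.Restriction_of_alg_quotient_general}, and the classical theory of $\pi_{G'}$) are the same ones the paper uses, the organizing principle is genuinely different. You distill the cartesian square into the single transport identity $\pi_G^{-1}(\pi_G(zg')) = z\,\pi_{G'}^{-1}(\pi_{G'}(g'))$, then pull back each assertion from the reductive group $G'$; I have checked the derivation of that identity, including the verification that $sh' \in \pi_{G'}^{-1}(\pi_{G'}(g'))$ via centrality of $s$ in $G'$, and it is correct. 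The paper instead proves~\eqref{Cor.Conjugacy_classes_general1} directly from the surjectivity of $\rho_{G,T}$ in Corollary~\ref{Cor.Restriction_of_alg_quotient_general} (which gives non-emptiness at once, without the detour through~\eqref{Cor.Conjugacy_classes_general3} and Lemma~\ref{Lem.torsion_points}), then climbs \eqref{Cor.Conjugacy_classes_general1}$\Rightarrow$\eqref{Cor.Conjugacy_classes_general2}$\Rightarrow$\eqref{Cor.Conjugacy_classes_general3}$\Rightarrow$\eqref{Cor.Conjugacy_classes_general4},\eqref{Cor.Conjugacy_classes_general6}, giving intrinsic arguments for~\eqref{Cor.Conjugacy_classes_general4} and~\eqref{Cor.Conjugacy_classes_general6} rather than transporting a classical fibre decomposition of $\pi_{G'}$. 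Your route is conceptually cleaner once the key identity is in place, but it does lean on a ``classical union decomposition of the fibre of $\pi_{G'}$'' for~\eqref{Cor.Conjugacy_classes_general4} which is not quite a standard citable statement; you would essentially have to reprove it for $G'$ the way the paper proves it for $G$, so no real work is saved there. Two small organizational cautions: make sure the forward direction of~\eqref{Cor.Conjugacy_classes_general2} (semisimple classes are closed, via Lemma~\ref{Lem.Conj_class_general} and \cite[Proposition~18.2]{Hu1975Linear-algebraic-g}) is established \emph{before} you invoke ``uniqueness of the semisimple class in the fibre'' inside~\eqref{Cor.Conjugacy_classes_general3}, since Corollary~\ref{Cor.Properties_Alg_quotient_general} only gives uniqueness of the closed class; and your proof of~\eqref{Cor.Conjugacy_classes_general6} is correct but more circuitous than necessary --- once one knows $y$ is semisimple and lies in $\pi_G^{-1}(\pi_G(x))$, part~\eqref{Cor.Conjugacy_classes_general3} already gives $y \in C_G(y) \subseteq \overline{C_G(x)} \subseteq X$ without passing through~\eqref{Cor.Conjugacy_classes_general1}.
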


\begin{proof}
	\eqref{Cor.Conjugacy_classes_general1}: Since $\rho_{G, T}^{-1}(\rho_{G, T}(g)) = G_{\ant} T \cap \pi_G^{-1}(\pi_G(g))$,
	the result follows from Corollary~\ref{Cor.Restriction_of_alg_quotient_general}.
	
	\eqref{Cor.Conjugacy_classes_general2}:  By \cite[Proposition~18.2]{Hu1975Linear-algebraic-g} the conjugacy classes of semisimple elements in $G'$ are closed. 
	As every semisimple elemet in $G$ is conjugated to an element in $G_{\ant}T = ZT'$ (see Lemma~\ref{Lem.torsion_points}) it follows by
	using Lemma~\ref{Lem.Conj_class_general} that the conjugacy class of every semisimple element of $G$ is closed.
	On the other hand, let $C_{G}(g)$ be a closed conjugacy class in $G$. From~\eqref{Cor.Conjugacy_classes_general1} it follows that there exist $z \in Z$ and 
	$t' \in T'$ with $zt' \in \pi_G^{-1}(\pi_G(g))$.
	Hence, $C_G(g) = C_G(zt') = z C_{G'}(t')$ as they are closed conjugacy classes in $\pi_G^{-1}(\pi_G(g))$,
	see Corollary~\ref{Cor.Properties_Alg_quotient_general}.
	
	\eqref{Cor.Conjugacy_classes_general3}: Corollary~\ref{Cor.Properties_Alg_quotient_general} 
	and~\eqref{Cor.Conjugacy_classes_general2} imply that $\pi_G^{-1}(\pi_G(g))$ contains exactly
	one semisimple conjugacy class $C$. As the closure of $C_G(g)$ contains a closed
	conjugacy class and is contained in $\pi_G^{-1}(\pi_G(g))$, it follows that $C$ is contained in 
	$\overline{C_G(g)}$. Write $g = z g'$ with $z \in Z$ and $g' \in G'$. By Remark~\ref{Rem.Jordan}
	we get $g_s = z (g')_s$. By \cite[Proposition~1.7]{Hu1995Conjugacy-classes-} we have
	$(g')_s \in \overline{C_{G'}(g')}$ and thus
	\[
		g_s = z (g')_s \in z \overline{C_{G'}(g')} = \overline{C_G(z g')} = \overline{C_G(g)} \, .
	\]
	Hence $C = C_{G}(g_s)$.
	
	\eqref{Cor.Conjugacy_classes_general4}: 
	For $g \in G$, let
	\[
		S_g \coloneqq \bigcup_{u \in E_{g}} C_G(u g_s)  \, .
	\]
	
	``$\pi_G^{-1}(\pi_G(g)) \subseteq S_g$'': If $h \in \pi_G^{-1}(\pi_G(g))$, then $\pi_G(h_s) = \pi_G(h) = \pi_G(g)
	= \pi_G(g_s)$ and there exists $a \in G$ with  $h_s = a g_s a^{-1}$ by~\eqref{Cor.Conjugacy_classes_general3}
	Hence, $u \coloneqq a^{-1} h_u a \in G$ is unipotent and since
	\[
	a u g_s a^{-1} =  h_u ag_s a^{-1} =  h = ag_s a^{-1} h_u = ag_s u a^{-1}
	\]
	we get $u \in E_{g}$ and $h \in C_G(u g_s)$.
	
	``$\pi_G^{-1}(\pi_G(g)) \supseteq S_g$'': For $u \in E_{g}$ we get $\pi_G(u g_s) = \pi_G(g_s) = \pi_G(g)$,
	i.e.~$u g_s \in \pi_G^{-1}(\pi_G(g))$.
	
	\eqref{Cor.Conjugacy_classes_general5}: 
	By~\cite[\S3.8, Theorem~1]{St1974Conjugacy-classes-} all fibres of $\pi_{G'}$
	are irreducible and of dimension $\dim G' - \rank(G') = \dim G_{\aff} - \rank(G_{\aff})$.
	Hence, the statement follows from the cartesian diagram~\eqref{Eq.Alg_quotient_general} in Proposition~\ref{Prop.Alg_quotient_general}.
	
	\eqref{Cor.Conjugacy_classes_general6}: If $h \in G_{\ant}T \cap \pi_G^{-1}(\pi_G(x))$ for some $x \in X$, then
$h \in C_G(h) \subseteq \overline{C_G(x)} \subseteq X$ by~\eqref{Cor.Conjugacy_classes_general3}.
\end{proof}

\begin{lemma}
	\label{Lem.G'-Closedness}
	Consider
	\[
	q \coloneqq \id_Z \times \pi_{G'} \colon Z \times G' \to Z \times (G' \aquot G') \, .
	\]
	Then for every closed subvariety $Y \subseteq Z \times G'$ that is stable under $G'$-conjugation on the second factor,
	the image $q(Y)$ is closed in $Z \times (G' \aquot G')$.
\end{lemma}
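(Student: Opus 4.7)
My plan is to reduce the statement to the classical GIT fact that the categorical quotient of a reductive group action on an affine variety sends closed invariant subsets to closed subsets. The obstacle is that $Z \times G'$ is in general not affine, because $Z$ contains the anti-affine subgroup $G_{\ant}$; thus one cannot directly invoke GIT on the whole product.

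I would handle the trivial case $G' = \{1\}$ separately, where $q = \id_Z$ and there is nothing to prove. Otherwise $G'$ is semi-simple, hence reductive and affine. I would then fix an affine open cover $\{U_i\}_i$ of $Z$. Since $G'$ is affine, each $U_i \times G'$ is itself affine, and the $G'$-action by conjugation on the second factor (and trivially on $U_i$) becomes a reductive action on an affine variety.

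Next, I would identify $q|_{U_i \times G'} \colon U_i \times G' \to U_i \times (G' \aquot G')$ as the GIT quotient for this restricted action. Since $G'$ acts trivially on $U_i$, the Reynolds operator yields
\[
    \kk[U_i \times G']^{G'} = \kk[U_i] \otimes_{\kk} \kk[G']^{G'},
\]
so the affine GIT quotient is precisely $U_i \times (G' \aquot G')$ with quotient morphism $q|_{U_i \times G'}$. Applying the classical theorem that for reductive actions on affine varieties images of closed invariant subsets under the categorical quotient are closed (cf. \cite[Ch.~13, Theorem~2.4]{SaRi2017Actions-and-invari}) to the closed $G'$-invariant set $Y_i \coloneqq Y \cap (U_i \times G')$ shows that $q(Y_i)$ is closed in $U_i \times (G' \aquot G')$.

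Finally, I would verify the set-theoretic identity $q(Y) \cap (U_i \times (G' \aquot G')) = q(Y_i)$ for every $i$, which is immediate since a point of $q(Y)$ lying over $U_i$ is the image of some $(z,g) \in Y$ with $z \in U_i$. Hence $q(Y)$ is closed when intersected with each open in the cover $\{U_i \times (G' \aquot G')\}_i$ of $Z \times (G' \aquot G')$, and therefore $q(Y)$ itself is closed. The only genuine subtlety is the initial reduction to the affine setting via an open cover of $Z$; once this is in place, the conclusion is a direct application of classical invariant theory.
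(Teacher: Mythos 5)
Your proof is correct and follows essentially the same route as the paper: cover $Z$ by affine opens, recognize the restriction of $q$ over each affine piece as a GIT quotient by the reductive group $G'$, apply the classical closedness-of-images-of-invariant-closed-sets result, and patch. The extra remarks you make (the trivial case $G' = \{1\}$, the Reynolds-operator identification of the invariant ring) are correct clarifications but do not change the argument.
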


\begin{proof}
	Let $Z_1, \ldots, Z_m \subseteq Z$ be open affine subsets that cover $Z$ and let
	\[
	q_i \colon q^{-1}(Z_i \times (G' \aquot G')) = Z_i \times G' \xrightarrow{(z_i, g) \mapsto q(z_i, g)} Z_i \times (G' \aquot G')
	\]
	Then $q_i$ is the categorical quotient of $Z_i \times G'$ with respect to $G'$-conjugation on the second factor.
	Since $Y \cap (Z_i \times G')$ is closed in the affine variety $Z_i \times G'$ and stable under $G'$-conjugation, it follows
	that $q_i(Y \cap (Z_i \times G')) = q(Y) \cap (Z_i \times (G' \aquot G'))$ is closed in $Z_i \times (G' \aquot G')$, see e.g.~\cite[Ch. 13, Theorem~2.4]{SaRi2017Actions-and-invari} for $i=1, \ldots, m$. 
	This implies that $q(Y)$ is closed in $Z \times (G' \aquot G')$.
\end{proof}

\begin{corollary}
	\label{Cor.Preimage_irred_under_alg_quotient_general}
	For every irreducible closed subvariety $Z \subseteq G \aquot G$, the
	preimage $\pi_{G}^{-1}(Z)$ is irreducible.
\end{corollary}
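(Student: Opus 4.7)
The plan is to exploit the cartesian diagram~\eqref{Eq.Alg_quotient_general} from Proposition~\ref{Prop.Alg_quotient_general}, together with the principal $S$-bundle structure of $\kappa$, in order to reduce the irreducibility of $\pi_{G}^{-1}(Z)$ to the corresponding statement for the adjoint quotient $\pi_{G'}$ of the semisimple derived group $G'$. Setting $\tilde{Z} \coloneqq \kappa^{-1}(Z)$, cartesianness of~\eqref{Eq.Alg_quotient_general} gives $\mu^{-1}(\pi_{G}^{-1}(Z)) = q^{-1}(\tilde{Z})$, and since $\mu$ is surjective we have $\pi_{G}^{-1}(Z) = \mu(q^{-1}(\tilde{Z}))$. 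Thus it will suffice to exhibit an irreducible subset of $q^{-1}(\tilde{Z})$ whose $\mu$-image exhausts $\pi_{G}^{-1}(Z)$.

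First I would analyse $\tilde{Z}$ via the free $S$-action from Remark~\ref{Rem.Alg_quotient_general}, where $S = Z \cap G'$ is finite. As $\kappa$ is finite étale of degree $|S|$ and $Z$ is irreducible, $\tilde{Z}$ is pure of dimension $\dim Z$ with at most $|S|$ irreducible components $\tilde{Z}_1, \ldots, \tilde{Z}_k$, and these are permuted transitively by $S$ (because $\tilde{Z}/S = Z$ is irreducible). The direct computation $\mu(zs, s^{-1}g) = zg = \mu(z,g)$ shows that $\mu$ is $S$-invariant, while $q$ is $S$-equivariant by Remark~\ref{Rem.Alg_quotient_general}. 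Consequently $\mu(q^{-1}(\tilde{Z}_i))$ is independent of $i$, and so $\pi_{G}^{-1}(Z) = \mu(q^{-1}(\tilde{Z})) = \mu(q^{-1}(\tilde{Z}_1))$; the problem thereby reduces to proving that $q^{-1}(\tilde{Z}_1)$ is irreducible.

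For the last step I would identify $q^{-1}(\tilde{Z}_1) \simeq \tilde{Z}_1 \times_{G' \aquot G'} G'$ via the composition of the inclusion $\tilde{Z}_1 \hookrightarrow Z \times (G' \aquot G')$ with the second projection, and then invoke a classical theorem of Steinberg~\cite{St1965Regular-elements-o} asserting that the adjoint quotient $\pi_{G'}$ is flat with geometrically irreducible fibres of constant dimension $\dim G' - \rank(G')$ (the latter dimension assertion being also Corollary~\ref{Cor.Conjugacy_classes_general}\eqref{Cor.Conjugacy_classes_general5} applied to $G'$). Base-changing along $\tilde{Z}_1 \to G' \aquot G'$, the induced morphism $q^{-1}(\tilde{Z}_1) \to \tilde{Z}_1$ is flat and surjective, with irreducible equidimensional fibres over the irreducible base $\tilde{Z}_1$. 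The standard argument now closes the proof: flatness forces every irreducible component of $q^{-1}(\tilde{Z}_1)$ to dominate $\tilde{Z}_1$, so each contains the (irreducible) generic fibre, and this forces a single component. Therefore $q^{-1}(\tilde{Z}_1)$ is irreducible, and $\pi_{G}^{-1}(Z) = \mu(q^{-1}(\tilde{Z}_1))$ is irreducible as the continuous image of an irreducible set. The main obstacle is precisely this last step, which rests on Steinberg's flatness result for the adjoint quotient of a semisimple group.
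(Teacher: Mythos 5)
Your reduction steps are clean and correct: the cartesian diagram~\eqref{Eq.Alg_quotient_general} together with surjectivity of $\mu$ gives $\pi_G^{-1}(Z) = \mu(q^{-1}(\tilde Z))$, the principal $S$-bundle structure of $\kappa$ shows that $S$ acts transitively on the components $\tilde Z_1, \ldots, \tilde Z_k$ of $\tilde Z$, the $S$-invariance of $\mu$ and $S$-equivariance of $q$ yield $\pi_G^{-1}(Z) = \mu(q^{-1}(\tilde Z_1))$, and the identification $q^{-1}(\tilde Z_1) \simeq \tilde Z_1 \times_{G' \aquot G'} G'$ is correct. This is a genuinely different route from the paper, which instead takes a maximal-dimensional irreducible component $X$ of $\pi_G^{-1}(Z)$ directly, shows by semi-continuity of fibre dimension and irreducibility of the fibres that $X$ is saturated under $\pi_G$, and then invokes Lemma~\ref{Lem.G'-Closedness} (the GIT closedness of images of conjugation-invariant closed sets) to conclude $\pi_G(X) = Z$.

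The gap is in the final step, where you assert that $\pi_{G'}$ is flat, citing~\cite{St1965Regular-elements-o}. Steinberg's freeness/flatness result ($\kk[G']$ free over $\kk[G']^{G'}$) is proved there for \emph{simply connected} semisimple groups. Here $G' = [G_{\aff}, G_{\aff}]$ is semisimple but need not be simply connected, and for non-simply-connected $G'$ the quotient $G' \aquot G' = T'/W'$ can be singular, so miracle flatness does not apply. A concrete example: for $G' = \mathrm{PGL}_3$ the character lattice of $T'$ is the root lattice of $A_2$, and one computes $\kk[T']^{W'} \simeq \kk[u,v,w]/(uv - w^3)$, which is singular at the origin. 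The paper's citations~\cite[Cor.~6.4]{St1965Regular-elements-o} and~\cite[\S3.8, Thm.~1]{St1974Conjugacy-classes-} only establish that $\rho_{G',T'}$ is the geometric quotient $T' \to T'/W'$ and that the fibres of $\pi_{G'}$ are irreducible of constant dimension; neither gives flatness. Without flatness, the assertion that every irreducible component of $q^{-1}(\tilde Z_1)$ dominates $\tilde Z_1$ is not justified. The gap can be repaired within your framework: take a maximal-dimensional component $X_0$ of $q^{-1}(\tilde Z_1)$, use semi-continuity of fibre dimension together with irreducibility and constancy of the fibre dimension of $\pi_{G'}$ to show that $X_0$ is a union of $\pi_{G'}$-fibres (hence $G'$-conjugation-stable and closed), and then invoke the closedness of images of invariant closed subsets under the categorical quotient (the content of Lemma~\ref{Lem.G'-Closedness}) to conclude that $q(X_0)$ is closed in $\tilde Z_1$, hence equal to $\tilde Z_1$, hence $X_0 = q^{-1}(\tilde Z_1)$. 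This is essentially the device used in the paper, which is what makes the argument go through without any flatness input.
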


\begin{proof}
	Let $d \coloneqq \dim G_{\aff} - \rank G_{\aff}$. By Corollary~\ref{Cor.Conjugacy_classes_general}\eqref{Cor.Conjugacy_classes_general5} every fibre of $\pi_{G}$ is irreducible
	and has dimension $d$.
	Let $X$ be an irreducible component of $\pi_{G}^{-1}(Z)$ with
	$\dim X = \dim \pi_{G}^{-1}(Z)$. We have to show that $X = \pi_{G}^{-1}(Z)$.
	
	By construction, $\pi_{G} |_X \colon X \to Z$ is dominant and every fibre
	over some open dense subset of $Z$ has dimension $d$.
	Since all fibres of $\pi_{G}$ have dimension $d$, it follows
	by the semi-continuity of the fibre dimension (see e.g.~\cite[Theorem 14.110]{GoWe2010Algebraic-geometry}) 
	that every non-empty fibre of  
	$\pi_{G} |_X \colon X \to Z$ has dimension $d$.
	Since all fibres of $\pi_G$ are irreducible, there exists thus
	a subset $V \subseteq Z$ such that $X = \pi_{G}^{-1}(V)$. 
	In particular $X$ is stable under conjugation.
	
	Let $Y \coloneqq \mu^{-1}(X) \subseteq Z \times G'$. Then $Y$ is invariant under
	conjugation and thus $q(Y)$ is closed in $Z \times (G' \aquot G')$, where 
	$q = \id_{Z} \times \pi_{G'}$, 
	see Lemma~\ref{Lem.G'-Closedness}. Let $S \coloneqq Z \cap G'$ act
	on $Z \times G'$ via $s \cdot (z, g) \coloneqq (zs, s^{-1} g)$.
	Then $Y$ is $S$-stable. By Remark~\ref{Rem.Alg_quotient_general} there exists an $S$-action
	on $Z \times (G' \aquot G')$ such that 
	$q$ is $S$-equivariant and the surjection $\kappa$ in the diagram~\eqref{Eq.Alg_quotient_general} from
	Proposition~\ref{Prop.Alg_quotient_general} is a principal $S$-bundle.
	Hence, $q(Y)$ is a closed $S$-stable subset of $Z \times (G' \aquot G')$
	and therefore $\kappa(q(Y))$ is closed in $G \aquot G$. Since $\kappa \circ q = \pi_{G} \circ \mu$,
	it follows that $\pi_{G}(\mu(Y)) = \pi_{G}(X)$ is closed in $G \aquot G$.
	As $\pi_{G}(X)$ is dense in $Z$, we get $V = \pi_{G}(X) = Z$, i.e.~$X = \pi_{G}^{-1}(V) = \pi_{G}^{-1}(Z)$. 
\end{proof}

\section{Commutative algebraic groups with dense torsion subgroup}

This section is devoted to a characterization of commutative algebraic groups
with dense torsion subgroup $G_{\tor}$. 

\begin{prop}\label{Prop.dense}  
	Let $G$ be a commutative algebraic group. Then 
	$G_{\tor}$ is dense in $G$ if and only if 
	every  homomorphism $\varphi \colon G \to \GG_a$ is constant. 
\end{prop}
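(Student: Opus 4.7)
The ``only if'' direction is immediate: any homomorphism $\varphi \colon G \to \GG_a$ kills $G_{\tor}$ because $\GG_a$ is torsion-free in characteristic zero, and continuity together with density of $G_{\tor}$ then forces $\varphi \equiv 0$.

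For the converse I would argue contrapositively. Suppose $G_{\tor}$ is not dense in $G$; after reducing to the case $G$ connected (straightforward: $G/G^\circ$ is finite, so each coset $gG^\circ$ contains a torsion element because $[m]$ is surjective on $G^\circ$ in characteristic zero, and any non-trivial homomorphism $G^\circ \to \GG_a$ lifts to $G$ via precomposition with $[N]$ where $N = |G/G^\circ|$), I set $H \coloneqq \overline{G_{\tor}}$, a proper closed subgroup of $G$, and $Q \coloneqq G/H$. My plan is to show $Q$ is torsion-free, and then that any torsion-free connected commutative algebraic group is a vector group $\GG_a^r$ with $r \geq 1$, so that any coordinate projection yields the desired non-constant homomorphism $G \twoheadrightarrow Q \to \GG_a$.

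The key technical step, and what I expect to be the main obstacle, is to show that $[n] \colon H \to H$ is surjective for every $n \geq 1$, despite $H = \overline{G_{\tor}}$ not being connected in general. Surjectivity on the identity component $H^\circ$ is immediate since $[n]$ is étale on any connected commutative algebraic group in characteristic zero (its differential at the identity is multiplication by $n$ on the Lie algebra, which is invertible), so its image is open, closed, non-empty, hence everything. For the remaining components, I would use density of $G_{\tor}$ in $H$ to place a torsion element $t$ in each component; lifting $t$ to $s \in G$ with $s^n = t$ via surjectivity of $[n]_G$, one observes that $s$ is automatically of finite order (since $s^{n \cdot \textrm{ord}(t)} = e$), hence $s \in G_{\tor} \subseteq H$. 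Combining with $[n]H^\circ = H^\circ$ this gives $[n]H = H$. Torsion-freeness of $Q$ then follows at once: if $q^n = e$ in $Q$, lift to $g \in G$ with $g^n \in H$, pick $h \in H$ with $h^n = g^n$, and note that $gh^{-1}$ is $n$-torsion in $G$, so $gh^{-1} \in G_{\tor} \subseteq H$, forcing $g \in H$.

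The concluding step is structural: a torsion-free connected commutative algebraic group $Q$ must be a vector group. Indeed, $Q/Q_{\aff}$ is an abelian variety; if it were non-trivial, its torsion would lift to non-trivial torsion in $Q$ by using surjectivity of $[n]$ on the affine commutative group $Q_{\aff} \cong T \times \GG_a^s$ to adjust a lift. Hence $Q = Q_{\aff}$, and torsion-freeness then forces the torus factor to vanish, so $Q \cong \GG_a^r$ with $r \geq 1$ by non-triviality of $Q$. Composing the quotient $G \to Q$ with any coordinate projection $Q \to \GG_a$ gives the desired non-constant homomorphism, contradicting the hypothesis and closing the argument.
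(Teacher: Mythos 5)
Your proof is correct, but it takes a genuinely different route from the paper's.

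The paper proves the ``if'' direction by induction on $\dim G$: after reducing to $G$ connected, it shows that $G$ must have infinitely many torsion points (else $G\cong\GG_a^r$ with $r>0$, contradicting the hypothesis), sets $H\coloneqq\overline{G_{\tor}}^\circ$, a positive-dimensional divisible subgroup, uses divisibility to see that $G_{\tor}$ surjects onto $(G/H)_{\tor}$, and then applies the inductive hypothesis to $G/H$ (which also has only constant homomorphisms to $\GG_a$) to conclude $(G/H)_{\tor}$ dense, hence $\overline{G_{\tor}}$ surjects onto $G/H$ and so equals $G$.

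You instead argue contrapositively and with no induction: you set $H\coloneqq\overline{G_{\tor}}$ (the full closure, possibly disconnected), prove that $[n]\colon H\to H$ is surjective for all $n$ by combining the étale/connectedness argument on $H^\circ$ with the observation that each component of $H$ contains a torsion point whose $n$-th roots in $G$ are automatically torsion, deduce that $Q\coloneqq G/H$ is torsion-free, and then invoke the structure theory to conclude that a torsion-free connected commutative group is a vector group $\GG_a^r$ with $r\geq 1$. A coordinate projection then furnishes the non-constant homomorphism. Your version makes explicit the pleasant structural fact that $G/\overline{G_{\tor}}$ is always a vector group, at the cost of having to handle the possibly disconnected group $\overline{G_{\tor}}$; the paper's induction avoids this disconnectedness issue (it only ever quotients by the connected group $\overline{G_{\tor}}^\circ$) but at the price of an induction and a less explicit final picture. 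Both routes ultimately rest on the same ingredients: divisibility of connected commutative groups, density of torsion in abelian varieties and tori, and the affine/anti-affine decomposition.

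One minor remark: your reduction to the connected case is in fact more complete than the paper's, which does not explicitly observe that a non-constant homomorphism $G^\circ\to\GG_a$ extends to a non-constant homomorphism on $G$; your device of precomposing with $[N]$ for $N=|G/G^\circ|$ fills this small gap cleanly.
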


For the proof we need some preparation.

\begin{lemma}
	\label{Lem.Comm_conn_group_div}
	Every commutative connected algebraic group is divisible.
\end{lemma}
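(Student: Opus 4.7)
The plan is to show that for every integer $n \geq 1$, the $n$-th power map $[n] \colon G \to G$, $g \mapsto g^n$, is surjective. Since $G$ is commutative, $[n]$ is a homomorphism of algebraic groups. By the general fact cited in \S\ref{Subsect.Structure_results} (homomorphic images of algebraic groups are closed), its image $[n](G)$ is a closed subgroup of $G$. Hence it suffices to prove that $[n]$ is dominant: then $[n](G)$ is a closed dense subset of the irreducible variety $G$, so $[n](G) = G$.

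To establish dominance, I would compute the differential of $[n]$ at the identity $e \in G$. Since the differential at $(e,e)$ of the multiplication $\mu \colon G \times G \to G$ is the sum map $(X,Y) \mapsto X + Y$ on $\mathrm{Lie}(G) \oplus \mathrm{Lie}(G)$, an easy induction on $n$ (using that $[n] = \mu \circ (\id_G \times [n-1])$ together with commutativity) yields
\[
d[n]_e = n \cdot \id_{\mathrm{Lie}(G)} \, .
\]
Because $\kk$ has characteristic zero, multiplication by $n$ is an isomorphism of $\mathrm{Lie}(G)$, so $[n]$ is smooth (hence dominant) at $e$. Combined with the previous paragraph, this gives $[n](G) = G$, which is exactly divisibility of $G$.

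There is essentially no serious obstacle here: the argument is just the standard characteristic-zero calculation of the differential of the $n$-th power map on a commutative group, together with the closedness of homomorphic images. The only point to be careful about is that the differential computation relies on commutativity (so that $[n]$ is a homomorphism and the inductive formula $d[n]_e = \id + d[n-1]_e$ actually holds), and on characteristic zero to invert $n$ on the Lie algebra; both hypotheses are in force.
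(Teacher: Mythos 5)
Your proof is correct, but it takes a genuinely different route from the paper's. The paper argues structurally: for a commutative connected $H$ it observes that $H_{\aff}$ is a connected commutative affine group (hence divisible, e.g.\ because in characteristic zero it is a product of copies of $\GG_a$ and $\GG_m$), that the quotient $H/H_{\aff}$ is an abelian variety (hence divisible), and that an extension of a divisible abelian group by a divisible abelian group is divisible. Your argument instead works directly with the $n$-th power map $[n]$: commutativity makes $[n]$ a homomorphism with closed image, and the computation $d[n]_e = n\cdot\id_{\mathrm{Lie}(G)}$ together with $\mathrm{char}\,\kk = 0$ shows $[n]$ is dominant, forcing $[n](G)=G$. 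Both proofs use characteristic zero (the paper implicitly, via the divisibility of $\GG_a$). Your approach is more self-contained, avoiding the Rosenlicht-type decomposition and the divisibility of abelian varieties; the paper's approach is shorter given that this structure theory is already set up in \S\ref{Subsect.Structure_results}. One small remark: the identity $d[n]_e = n\cdot\id$ actually holds for any connected algebraic group (since $d\mu_{(e,e)}(X,Y)=X+Y$ regardless of commutativity); commutativity is used only to ensure $[n]$ is a homomorphism so that its image is a (closed) subgroup, which you correctly flag as the crucial place where the hypothesis enters.
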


\begin{proof}
	Let $H$ be a commutative connected algebraic group. Then
	$H_{\aff}$ is a connected commutative affine algebraic group and hence it is divisible.
	Moreover, as the quotient $H/H_{\aff}$ is an abelian variety, it is divisible, and thus
	$H$ is divisible as an extension of commutative divisible groups.
\end{proof}

\begin{lemma}
	\label{Lem.Conn_Comp_Comm}
	If $G$ is a commutative algebraic group, then $G = \bigcup_{i=1}^n t_i G^\circ$ where
	$t_1, \ldots, t_n \in G$ are torsion points  and $G^\circ$ denotes the connected component of $G$.
\end{lemma}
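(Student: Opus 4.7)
The plan is to exploit the finiteness of the component group $G/G^\circ$ together with the divisibility of the connected commutative group $G^\circ$ established in Lemma~\ref{Lem.Comm_conn_group_div}. Since $G$ is an algebraic group, $G^\circ$ is a closed normal subgroup of finite index, so we can write $G = \bigsqcup_{i=1}^n g_i G^\circ$ for some representatives $g_1,\ldots,g_n \in G$. It suffices to show that for each $i$ we can replace $g_i$ by a torsion element $t_i$ lying in the same coset $g_i G^\circ$.

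For each $i$, let $n_i$ be the order of the image of $g_i$ in the finite group $G/G^\circ$, so that $g_i^{n_i} \in G^\circ$. By Lemma~\ref{Lem.Comm_conn_group_div}, $G^\circ$ is divisible, hence there exists $h_i \in G^\circ$ with $h_i^{n_i} = g_i^{n_i}$. Setting $t_i \coloneqq g_i h_i^{-1}$, we have $t_i G^\circ = g_i G^\circ$, and by commutativity
\[
 t_i^{n_i} = g_i^{n_i} h_i^{-n_i} = 1,
\]
so $t_i$ is a torsion point. The union $G = \bigcup_{i=1}^n t_i G^\circ$ then follows from $t_i G^\circ = g_i G^\circ$.

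There is no serious obstacle; the only substantive ingredient is the divisibility of $G^\circ$, which is already furnished by the previous lemma, and commutativity is essential for the identity $(g_i h_i^{-1})^{n_i} = g_i^{n_i} h_i^{-n_i}$ to hold.
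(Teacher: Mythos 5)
Your proof is correct and takes essentially the same route as the paper: choose coset representatives for the finite component group, then use divisibility of $G^\circ$ (Lemma~\ref{Lem.Comm_conn_group_div}) and commutativity to adjust each representative to a torsion element in the same coset. The only cosmetic difference is that the paper uniformly uses the index $n = [G:G^\circ]$ where you use the individual orders $n_i$; both work identically.
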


\begin{proof}
	Let $n$ be the index of $G^\circ$ in $G$. Then there exist $g_1, \ldots g_n \in G$
	such that $G$ is the union of the subsets $g_1 G^\circ, \ldots, g_n G^\circ$. 
	By Lemma~\ref{Lem.Comm_conn_group_div}, $G^\circ$ is divisible. As
	$g_i^n \in G^\circ$, there exists thus $a_i \in G^\circ$ such that $g_i^n = a_i^n$.
	Then the torsion elements $t_i = g_i a_i^{-1} \in G$, $i=1, \ldots n$ will do the job.
\end{proof}

%
%
%

\begin{proof}[Proof of Proposition~\ref{Prop.dense}]  
	If $G_{\tor}$ is dense in $G$, then every homomorphism $G \to \GG_a$ is constant. 
	Now,  assume that every homomorphism $G \to \GG_a$ is constant.

	Let $G^\circ$ be the connected component of $G$. If $(G^\circ)_{\tor}$ is dense in $G^\circ$, then
	$G_{\tor}$ is dense in $G$ by Lemma~\ref{Lem.Conn_Comp_Comm}. Thus we may and will assume that
	$G$ is commutative and connected.  
	
	If $\dim G = 0$, then the lemma is trivial. Hence we assume $\dim G > 0$.
	If $G$ has only finitely many torsion points, then $G$ is affine (as $G_{\aff}$ is divisible and $(G/G_{\aff})_{\tor}$
	lies dense in $G/G_{\aff}$, see
	Lemma~\ref{Lem.Comm_conn_group_div}) and thus $G \simeq \GG_a^r$ for some $r > 0$.
	As every homomorphism $G \to \GG_a$ is constant, we conclude that $G$ contains infinitely many
	torsion points. 
	Moreover, as  $G$ is commutative, the torsion points $G_{tor}$ from a normal subgroup of $G$. As $G$ contains infinitely many torsion points, the connected component of the closure   
	$H \coloneqq \overline{G_{\tor}}^\circ \subseteq G$ is a normal connected subgroup of positive di\-men\-sion, which
	is divisible (see Lemma~\ref{Lem.Comm_conn_group_div}). 
	This implies that the canonical projection
	$G \to G/H$ restricts to a surjection $G_{\tor} \to (G/H)_{\tor}$.
	As $\dim G/H < \dim G$, we  conclude the proposition by induction. 
\end{proof} 

The following example shows that already in a solvable (non-commutative) algebraic group the set
of torsion points can be rather complicate to describe:

\begin{example}
	\label{Exa.Torsion_in_Borel}
	Let $B \subseteq \GL_3(\kk)$ be the subgroup of upper triangular matrices. Then we may and will identify
	$B$ with $H \rtimes T$, where $T \subseteq B$ denotes the subgroup of diagonal matrices,
	$H \subseteq B$ denotes the subgroup of upper triangular matrices with only $1$ on the diagonal
	and the product is given by $(A, S) \cdot (A', S') = (ASA'S^{-1}, SS')$.
	We will compute the set of torsion points $B_{\tor}$. For this we insert some notation:
	\begin{eqnarray*}
		S_0 &\coloneqq& \Bigset{\begin{psmallmatrix}
			\lambda & 0 & 0 \\ 0 & \mu & 0 \\ 0 & 0 & \varepsilon
		\end{psmallmatrix} \in T}{\lambda = \mu = \varepsilon, 
		\, \textrm{$\lambda, \mu, \varepsilon$ are roots of unity in $\kk$}} \\
		S_1 &\coloneqq& \Bigset{\begin{psmallmatrix}
					\lambda & 0 & 0 \\ 0 & \mu & 0 \\ 0 & 0 & \varepsilon
			\end{psmallmatrix} \in T}{\lambda \neq \mu = \varepsilon, \, \textrm{$\lambda, \mu, \varepsilon$ are roots of unity in $\kk$}} \\
		S_2 &\coloneqq& \Bigset{\begin{psmallmatrix}
		\lambda & 0 & 0 \\ 0 & \mu & 0 \\ 0 & 0 & \varepsilon
	\end{psmallmatrix} \in T}{\varepsilon \neq \lambda = \mu, \, \textrm{$\lambda, \mu, \varepsilon$ are roots of unity in $\kk$}} \\
		S_3 &\coloneqq& \Bigset{\begin{psmallmatrix}
		\lambda & 0 & 0 \\ 0 & \mu & 0 \\ 0 & 0 & \varepsilon
		\end{psmallmatrix} \in T}{\mu \neq \varepsilon = \lambda, \, \textrm{$\lambda, \mu, \varepsilon$ are roots of unity in $\kk$}} \\
		S_4 &\coloneqq& \Bigset{\begin{psmallmatrix}
		\lambda & 0 & 0 \\ 0 & \mu & 0 \\ 0 & 0 & \varepsilon
		\end{psmallmatrix} \in T}{\lambda \neq \mu \neq \varepsilon \neq \lambda, 
		\, \textrm{$\lambda, \mu, \varepsilon$ are roots of unity in $\kk$}} \, .
	\end{eqnarray*}
	We claim that there is a decomposition into disjoint subsets
	\[
		B_{\tor} = \{ I_3\} \times S_0 \cup V_{H \times S_1}(c) \cup V_{H \times S_2}(a)
		\cup V_{H \times S_3}((1-\lambda \mu^{-1})b -ac) 
		\cup H \times S_4 \, ,
	\]
	where for $f \in \kk[a, b, c, \lambda^{\pm1}, \mu^{\pm1}, \varepsilon^{\pm1}]$ and $i \in \{1, 2, 3\}$ we denote
	\[
		V_{H \times S_i}(f) \coloneqq \Bigset{
		\left(
		\begin{psmallmatrix}
				1 & a & b \\ 0 & 1 & c \\ 0 & 0 & 1
		\end{psmallmatrix},
		\begin{psmallmatrix}
			\lambda & 0 & 0 \\ 0 & \mu & 0 \\ 0 & 0 & \varepsilon
		\end{psmallmatrix} \right) \in H \times S_i
		}{f(a, b, c, \lambda, \mu, \varepsilon) = 0}
	\]
	and $I_3$ denotes the identity matrix. Indeed, fix an integer $n > 0$ and let 
	\[
	P = \left( \begin{psmallmatrix}
		1 & a & b \\ 0 & 1 & c \\ 0 & 0 & 1
	\end{psmallmatrix}, 
	\begin{psmallmatrix}
		\lambda & 0  & 0  \\ 0 & \mu & 0 \\ 0 & 0 & \varepsilon
	\end{psmallmatrix} \right) \in H \rtimes T \, , \quad
		\theta \coloneqq \lambda \mu^{-1} \, , \quad \eta \coloneqq \mu \varepsilon^{-1} \, .
	\]
	Using that
	\[
		\begin{psmallmatrix}
			\lambda & 0 & 0 \\ 0 & \mu & 0 \\ 0 & 0 & \varepsilon
		\end{psmallmatrix}
		\begin{psmallmatrix}
			1 & a & b \\ 0 & 1 & c \\ 0 & 0 & 1
		\end{psmallmatrix}
		\begin{psmallmatrix}
			\lambda^{-1} & 0 & 0 \\ 0 & \mu^{-1} & 0 \\ 0 & 0 & \varepsilon^{-1}
		\end{psmallmatrix}
		= \begin{psmallmatrix}
			1 & \theta a & (\theta \eta) b \\ 0 & 1 & \eta c \\ 0 & 0 & 1
		\end{psmallmatrix}
	\]
	it follows that $P^n$ is the neutral element in $H \rtimes T$ if and only if
	\[
		\begin{psmallmatrix}
			\lambda & 0  & 0  \\ 0 & \mu & 0 \\ 0 & 0 & \varepsilon
		\end{psmallmatrix}^n = I_3
		\quad \textrm{and} \quad
		\prod_{i=0}^{n-1} \begin{psmallmatrix}
			1 & \theta^i a & (\theta \eta)^i b \\ 0 & 1 & \eta^i c \\ 0 & 0 & 1
		\end{psmallmatrix} = I_3
		 \, .
	\]
	A calculation shows that this last assertion is equivalent to  $\lambda^n = \mu^n = \varepsilon^n = 1$
	and
	\begin{equation} 
		\label{Eq.Unipotentpart}
		a \sum_{i=0}^{n-1} \theta^i = 0 \, , \
		b \sum_{i=0}^{n-1} (\theta \eta)^i + ac S(\theta, \eta) = 0 \, , \
		c \sum_{i=0}^{n-1} \eta^i = 0 \, ,
	\end{equation}
	where 
	\[
		S(\theta, \eta) \coloneqq \sum_{i=1}^{n-1} \left( \sum_{j=0}^{i-1} \theta^j \right) \eta^i \, .
	\]
	Note that in case $\theta \neq 1$ we get
	\[
		S(\theta, \eta) = \sum_{i=1}^{n-1} \frac{1- \theta^i}{1-\theta} \eta^i = \frac{1}{1-\theta} 
		\left( \sum_{i=1}^{n-1} \eta^i - \sum_{i=1}^{n-1} (\theta \eta)^i \right) \, .
	\]
	Now, we fix $\lambda, \mu, \varepsilon$ with $\lambda^n = \mu^n = \varepsilon^n = 1$ and make a case-by-case analysis:
	\begin{enumerate}[leftmargin=*, label=\arabic*)]
		\setcounter{enumi}{-1}
		\item $\theta = 1$, $\eta = 1$, $\theta\eta = 1$: \eqref{Eq.Unipotentpart} is equivalent to $a = b = c = 0$.
		\item $\theta \neq 1$, $\eta  = 1$, $\theta\eta \neq 1$: \eqref{Eq.Unipotentpart} is equivalent to $c = 0$.
		\item $\theta = 1$, $\eta  \neq 1$, $\theta\eta \neq 1$: \eqref{Eq.Unipotentpart} is equivalent to $a = 0$.
		\item $\theta \neq 1$, $\eta  \neq 1$, $\theta\eta=1$: \eqref{Eq.Unipotentpart} is equivalent to $(1- \theta) b -ac = 0$.
		\item $\theta \neq 1$, $\eta  \neq 1$, $\theta\eta \neq 1$: \eqref{Eq.Unipotentpart} gives no condition.
	\end{enumerate}
	This case-by-case analysis gives the claim, as for each $i = 0, \ldots, 4$,
	case~$i)$ corresponds to 
	\[
	\begin{psmallmatrix}
		\lambda & 0 & 0 \\ 0 & \mu & 0 \\ 0 & 0 & \varepsilon
	\end{psmallmatrix} \in S_i \, .
	\]
\end{example}

In the next example we show that Proposition~\ref{Prop.dense} fails for non-commutative groups.

\begin{example}
	Again, let $H \subseteq \GL_3(\kk)$ be the subgroup of upper triangular matrices with only $1$ on the diagonal
	and let
	\[
		T_0 \coloneqq \set{
			\begin{psmallmatrix}
				\lambda & 0 & 0 \\ 0 & \mu & 0 \\ 0 & 0 & \lambda
			\end{psmallmatrix}
		}{\lambda, \mu \in \kk^\ast} \subseteq \GL_3(\kk) \, .
	\]
	Then $T_0$ acts on $H$ by conjugation and by Example~\ref{Exa.Torsion_in_Borel} it follows that
	the closure of the torsion points in $H \rtimes T_0$ is given by
	\[
		A \coloneqq \overline{(H \rtimes T_0)_{\tor}} = \Bigset{
		\left(
		\begin{psmallmatrix}
				1 & a & b  \\ 0 & 1 & c \\ 0 & 0 & 1
		\end{psmallmatrix},
		\begin{psmallmatrix}
			\lambda & 0 & 0 \\ 0 & \mu & 0 \\ 0 & 0 & \lambda
		\end{psmallmatrix}
		\right)}
		{\begin{array}{l}
		  \textrm{$a,b, c \in \kk$, $\lambda, \mu \in \kk^\ast$,} \\
		  \textrm{$(1- \lambda \mu^{-1})b = ac$}
	  		\end{array}}	 \, .  
	\]
	Then $A$ is an irreducible hypersurface in $H \rtimes T_0$. Though $A$ is invariant under conjugation,
	but $A$ is not a subgroup of $H \rtimes T_0$, as the following example shows
	\[
			\underbrace{\left(
			\begin{psmallmatrix}
				1 & 1 & 0 \\ 0 & 1 & 0 \\ 0 & 0 & 1
			\end{psmallmatrix} \, , 
			I_3
			\right)}_{\in A}
			\underbrace{\left(
			\begin{psmallmatrix}
				1 & 0 & 0 \\ 0 & 1 & 1 \\ 0 & 0 & 1
			\end{psmallmatrix} \, , 
			I_3
			\right)}_{\in A}
			=
			\left(
			\begin{psmallmatrix}
				1 & 1 & 1 \\ 0 & 1 & 1 \\ 0 & 0 & 1
			\end{psmallmatrix} \, , 
			I_3
			\right) \not \in A \, ,
	\]
	where $I_3$ denotes the identity matrix in $\GL_3(\kk)$. Every homomorphism $\varphi \colon H \rtimes T_0 \to \GG_a$
	has $A$ in the kernel. Since $A$ is a hypersurface in $H \rtimes T_0$, but not a subgroup, 
	the kernel of $\varphi$ is the whole group $H \rtimes T_0$, i.e.~$\varphi$ is constant. 
\end{example}

\section{Manin-Mumford theorem for connected algebraic groups}


\begin{notation}
Throughout this section, let $G$ be a connected algebraic group.
For a subgroup $H \subseteq G$ and a subset $X \subseteq G$ denote 
\[
	C_G^H(X) = \set{hxh^{-1} \in G}{h \in H, x \in X} \, .
\]
If $H = G$, we simply write $C_G(X)$
instead of $C_G^G(X)$. Moreover, $Z_G(X)$ denotes the subgroup
of elements in $G$ that commute with each $x \in X$. 
\end{notation}

\begin{definition}
\label{Def.Special_subvarieties}
If $X \subseteq G$ is a subset, then we denote by $X_{\tor}$ the torsion elements from $G$ that belong to $X$.
We call the elements in
\[
	\Bigset{\overline{C_G(tS)}}{\begin{array}{l}
									\textrm{$S$ is a connected commutative subgroup of $G$} \\
									\textrm{such that $S_{\tor}$ is dense in $S$ and $t \in Z_G(S)_{\tor}$}
								\end{array}
								}
\]
\emph{special subvarieties} of $G$.
\end{definition}

In case $G$ is commutative, the special subvarieties are the closed subvarieties of the form 
$tS$, where $t \in G$ is a torsion point and $S \subseteq G$ is a commutative connected algebraic subgroup
such that $S_{\tor}$ is dense in $S$. Hence, these correspond exactly to the closed subvarieties in the conclusion of Theorem~\ref{Thm.MM}.

The aim of this section is to prove the following generalization of
the Manin-Mumford Theorem (Theorem~\ref{Thm.MM}) 
for not necessarily commutative algebraic groups:

\begin{theorem}[Manin-Mumford for algebraic groups]
	\label{Thm.MMalg}
	Let $G$ be a connected algebraic group and let $X \subseteq G$ be a subset.  
	If $X_{\tor}$ is dense in $X$,
	then $\overline{C_G(X)}$ is a finite union of special subvarieties of $G$.
\end{theorem}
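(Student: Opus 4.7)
The plan is to reduce to the commutative Manin--Mumford Theorem~\ref{Thm.MM} through three successively more general cases, following the Rosenlicht decomposition $G = G_{\ant} G_{\aff}$. As a preliminary step, I would replace $X$ by $\overline{C_G(X)}$: since $G$-conjugation preserves the torsion subset, $X_{\tor}$ being dense in $X$ forces the torsion points of $\overline{C_G(X)}$ to be dense in $\overline{C_G(X)}$. So I may assume $X$ is closed and $G$-conjugation invariant, and only need to show that such an $X$ is a finite union of special subvarieties.

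First, consider the \emph{reductive affine case} $G = F \times L$ with $F$ commutative connected and $L$ affine reductive. Fix a maximal torus $T \subseteq L$. By Lemma~\ref{Lem.torsion_points}, every torsion point of $F \times L$ is $L$-conjugate to one in $F \times T$; combined with the $L$-invariance of $X$, this yields that the closed set $X \cap (F \times T)$ has its torsion points dense in it. Applying Theorem~\ref{Thm.MM} to the commutative connected group $F \times T$ writes $X \cap (F \times T)$ as a finite union $\bigcup_i t_i S_i$ of torsion-translates of connected algebraic subgroups with dense torsion. Each $\overline{C_G(t_i S_i)}$ is then a special subvariety, and Corollary~\ref{Cor.Conjugacy_classes_general}\eqref{Cor.Conjugacy_classes_general6} (applied to $L$) together with $L$-invariance of $X$ recovers $X = \bigcup_i \overline{C_G(t_i S_i)}$.

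Next, I would handle the \emph{general affine case} $G = F \times H$ with $H$ affine but not necessarily reductive, via a stratified geometric quotient for $R_u(H)$-conjugation on $H$ (Lemma~\ref{Lem.Stratification}). This decomposes $H$ into finitely many locally closed $H$-conjugation invariant pieces on each of which a geometric quotient exists and the torsion structure can be transported back; patching together the reductions to the previous case yields the desired decomposition. Finally, for the \emph{general case}, I would use the Rosenlicht homomorphism $\mu \colon G_{\ant} \times G_{\aff} \to G$, which is surjective with finite kernel $K$ and has $G_{\ant}$ commutative. Pulling back, $\mu^{-1}(X)$ is closed, $(G_{\ant} \times G_{\aff})$-conjugation invariant, and has dense torsion points (preimages of torsion points are $K$-cosets, hence torsion). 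Apply the $F \times H$ case to obtain a decomposition of $\mu^{-1}(X)$ into special subvarieties, then push forward by $\mu$, using that $\mu$ sends commutative connected subgroups with dense torsion and torsion-translates thereof to objects of the same type.

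The main obstacle is the general affine case: both the absence of a categorical quotient for $H$-conjugation on a non-reductive $H$, and the fact that the quotient $H \to H/R_u(H)$ does not preserve torsion well (as illustrated by Example~\ref{Exa.Torsion_in_Borel}, where lifting a torsion point from the reductive quotient recovers only the semisimple part and leaves the unipotent part completely undetermined), prevent any direct reduction to the reductive situation. The stratification provided by Lemma~\ref{Lem.Stratification} is the technical device designed to circumvent both problems simultaneously, and working through its consequences for $R_u(H)$-orbits of torsion elements is where the bulk of the proof effort will lie.
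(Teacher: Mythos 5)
Your overall strategy matches the paper's: reduce to $G$-conjugation invariant closed $X$, prove the reductive product case $F\times L$ via the maximal torus and the commutative Theorem~\ref{Thm.MM}, generalize to $F\times H$ via the stratified quotient of Lemma~\ref{Lem.Stratification}, and finally use Rosenlicht's decomposition. The first three steps follow the paper's Proposition~\ref{Prop.MMProduct} and Proposition~\ref{Prop.MMProduct_gen} closely (your density argument for $X \cap (F\times T)$ in the reductive case is compressed, but the idea of descending through the Weyl-group quotient and applying Corollary~\ref{Cor.Geometric_quotient_density} is there).

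However, your final step has a genuine gap. You claim that the multiplication homomorphism $\mu \colon G_{\ant} \times G_{\aff} \to G$ is ``surjective with finite kernel $K$,'' but this is false in general: $\ker\mu \cong G_{\ant}\cap G_{\aff}$, and this intersection is frequently positive-dimensional (e.g.\ for an anti-affine extension $G$ of an abelian variety by $\GG_m$, one has $G_{\ant} = G$, $G_{\aff} = \GG_m$, and $G_{\ant}\cap G_{\aff} = \GG_m$). Consequently, $\mu$ is not an isogeny, the fibres $\mu^{-1}(x)$ over torsion points $x$ are positive-dimensional cosets of $\ker\mu$ consisting mostly of non-torsion elements, and the conclusion that $\mu^{-1}(X)$ has dense torsion is unjustified. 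The paper resolves this by first passing to the connected component $S = (G_{\ant}\cap G_{\aff})^\circ$, factoring $\mu$ through the quotient $\pi \colon G_{\ant}\times G_{\aff} \to G_{\ant}\times^S G_{\aff}$ and an honest isogeny $p \colon G_{\ant}\times^S G_{\aff}\to G$, reducing via $p$ and Corollary~\ref{Cor.Properties_of_SVV}\eqref{Cor.Properties_of_SVV_2} to the case where $G_{\ant}\cap G_{\aff}$ is connected, and then --- crucially --- working not with $\pi^{-1}(X)$ but with $X' = \overline{\pi^{-1}(X)_{\tor}}$, where density of torsion is automatic, before pushing forward. Without this replacement and the subsequent argument that $\pi$ of the resulting special subvarieties still covers $X$ densely (using that $\ker\pi$ is a central divisible connected group, so $\pi(G'_{\tor}) = G_{\tor}$), your pullback--pushforward argument does not close.
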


In  the first section, we gather some simple topological ingredients that are needed for the proof of Theorem~\ref{Thm.MMalg}
and in the second section we provide an example that shows that the intersection of two 
special subvarieties need not be the finite union of special subvarieties. In the third section, we prove Theorem~\ref{Thm.MMalg} for certain special cases
and reduced the full proof of Theorem~\ref{Thm.MMalg} to this special cases at the end of this third section.

\subsection{Simple topological ingredients}

Here we list several simple topological lemmas that we use in the proof of Theorem~\ref{Thm.MMalg}.

\begin{lemma}
	\label{Lem.top}
	Let $f \colon X \to Y$ be a continuous surjective 
	map between topological spaces. Then:
	\begin{enumerate}[leftmargin=*]
		\item \label{Lem.top1} If $X_0$ is dense in $X$, then $f(X_0)$ is dense in $Y$
		\item \label{Lem.top2} If $Y_0$ is dense in $Y$, $f$ is closed and 
							   $\overline{f^{-1}(Y_0)} =  f^{-1}(Y_1)$ for some $Y_1 \subseteq Y$, 
							   then $f^{-1}(Y_0)$ is dense in $X$.
	\end{enumerate}
\end{lemma}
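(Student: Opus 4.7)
The plan is to handle the two parts separately, both by standard elementary topology; I expect no serious obstacle.

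For part \ref{Lem.top1}, I would use that continuous maps send closures into closures. Since $X_0$ is dense in $X$ and $f$ is continuous, we have $f(X) = f(\overline{X_0}) \subseteq \overline{f(X_0)}$. Combined with surjectivity $f(X) = Y$, this forces $\overline{f(X_0)} = Y$.

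For part \ref{Lem.top2}, I would aim to show $Y_1 = Y$, because then by surjectivity of $f$ we get $\overline{f^{-1}(Y_0)} = f^{-1}(Y_1) = f^{-1}(Y) = X$, which is exactly the desired density. To prove $Y_1 = Y$, the key observation is that $Y_1$ is closed in $Y$ and contains $Y_0$: indeed, since $f$ is a closed map, the image $f(\overline{f^{-1}(Y_0)})$ is closed in $Y$, and using the hypothesis $\overline{f^{-1}(Y_0)} = f^{-1}(Y_1)$ together with surjectivity of $f$, this image equals $f(f^{-1}(Y_1)) = Y_1$. On the other hand, again by surjectivity, $Y_1 = f(f^{-1}(Y_1)) \supseteq f(f^{-1}(Y_0)) = Y_0$. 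Hence $Y_1$ is a closed subset of $Y$ containing the dense subset $Y_0$, so $Y_1 = \overline{Y_0} = Y$, which concludes the argument.

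The role of the side hypothesis $\overline{f^{-1}(Y_0)} = f^{-1}(Y_1)$ is precisely to ensure that the closure of $f^{-1}(Y_0)$ is saturated with respect to the fibres of $f$; without this assumption, closedness of $f$ alone would not allow one to conclude, since $f$ could collapse extra points into $Y_0$ without these points belonging to $f^{-1}(Y_0)$. Once this saturation is granted, the proof reduces to the two one-line observations above, and no further ingredient is needed.
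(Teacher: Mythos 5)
Your proof is correct and takes essentially the same route as the paper: for part (1) the one-line containment $Y = f(\overline{X_0}) \subseteq \overline{f(X_0)}$, and for part (2) showing that $Y_1 = f(\overline{f^{-1}(Y_0)})$ is a closed set containing the dense subset $Y_0$, hence all of $Y$. Your version simply spells out a few of the set-theoretic equalities ($f(f^{-1}(Y_1)) = Y_1$, etc.) that the paper leaves implicit.
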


\begin{proof}[Proof of Lemma~\ref{Lem.top}]
	\eqref{Lem.top1}: Note that
						$Y = f(X) = f(\overline{X_0}) \subseteq \overline{f(X_0)} \subseteq Y$.
						
	\eqref{Lem.top2}: Note that $Y_0 \subseteq f(\overline{f^{-1}(Y_0)}) = Y_1$. Since $f$ is closed and $Y_0$
	is dense in $Y$, we get $Y_1 = Y$, i.e.~$X = f^{-1}(Y) = \overline{f^{-1}(Y_0)}$.
\end{proof}

%

\begin{corollary}
	\label{Cor.Geometric_quotient_density}
	Let $H$ be a finite group that acts on a variety $X$ such that the geometric
	quotient  $\eta \colon X \to X/H \eqqcolon Y$  exists (e.g.~$X$ is quasi-projective).
	If $Y_0$ is dense in $Y$, then $\eta^{-1}(Y_0)$ is dense in $X$.
\end{corollary}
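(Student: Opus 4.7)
The plan is to deduce the corollary from Lemma~\ref{Lem.top}\eqref{Lem.top2} applied to $f = \eta$. The hypotheses that $\eta$ is continuous, surjective, and closed are immediate: $\eta$ is surjective by definition of the geometric quotient, and since $H$ is finite the morphism $\eta$ is finite, hence closed in the Zariski topology. It therefore only remains to verify the saturation hypothesis $\overline{\eta^{-1}(Y_0)} = \eta^{-1}(Y_1)$ for some subset $Y_1 \subseteq Y$.

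For this, I would use the standard fact that for a geometric quotient by a finite group the set-theoretic fibres of $\eta$ are exactly the $H$-orbits on $X$. In particular $\eta^{-1}(Y_0)$ is $H$-stable, and hence so is its Zariski closure $Z \coloneqq \overline{\eta^{-1}(Y_0)}$. Since fibres of $\eta$ are $H$-orbits, every $H$-stable subset of $X$ is saturated with respect to $\eta$, i.e.~$Z = \eta^{-1}(\eta(Z))$. Setting $Y_1 \coloneqq \eta(Z)$ gives the required equality, and Lemma~\ref{Lem.top}\eqref{Lem.top2} then yields that $\eta^{-1}(Y_0)$ is dense in $X$.

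The only genuine (and still very mild) point is the saturation property; it is essentially the content of $\eta$ being a geometric quotient by a finite group, but it is the step where the finiteness of $H$ is used in an essential way (for an infinite group one would need closedness of all orbits, which typically fails). The parenthetical remark that $X$ quasi-projective suffices for the existence of the geometric quotient enters only to ensure that $\eta$ is available in the first place; no further hypothesis on $X$ is needed for the argument above.
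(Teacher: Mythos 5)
Your proposal is correct and follows essentially the same route as the paper's proof: both observe that $\eta^{-1}(Y_0)$ is $H$-stable, hence so is its closure, which yields the saturation hypothesis $\overline{\eta^{-1}(Y_0)}=\eta^{-1}(Y_1)$ needed to invoke Lemma~\ref{Lem.top}\eqref{Lem.top2}. You are merely more explicit than the paper about why $\eta$ is closed (finiteness of the morphism for a finite group) and why $H$-stability implies saturation (fibres are orbits), but the argument is identical in substance.
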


\begin{proof}
	Since $\eta^{-1}(Y_0)$ is $H$-stable, its closure $\overline{\eta^{-1}(Y_0)}$ in $X$ is $H$-stable as well, i.e.~there
	exists a subset $Y_1 \subseteq Y$ such that $\overline{\eta^{-1}(Y_0)} = \eta^{-1}(Y_1)$. The claim follows now from
	Lemma~\ref{Lem.top}\ref{Lem.top2}.
\end{proof}

\begin{lemma}
	\label{Lem.Density_and_conjugation_classes}
	Let $X' \subseteq X \subseteq G$, where $X'$ is dense in $X$.
	Then for any closed subgroup $H \subseteq G$, $C_G^H(X')$ is dense in $C_G^H(X)$.
\end{lemma}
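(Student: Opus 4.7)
The plan is to reduce the statement to the observation that for each fixed $h \in H$, conjugation by $h$ is a homeomorphism of $G$ (in fact, an automorphism of varieties), and hence preserves density. This avoids any subtle question about whether $H \times X'$ is dense in $H \times X$ in the Zariski topology and keeps the argument entirely elementary.

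More precisely, for $h \in H$ let $\gamma_h \colon G \to G$, $g \mapsto h g h^{-1}$, which is a homeomorphism. Since $X'$ is dense in $X$ and $\gamma_h$ is a homeomorphism, $\gamma_h(X') = h X' h^{-1}$ is dense in $\gamma_h(X) = h X h^{-1}$. Writing
\[
	C_G^H(X') = \bigcup_{h \in H} h X' h^{-1} \et C_G^H(X) = \bigcup_{h \in H} h X h^{-1},
\]
we then compute, for any $h \in H$,
\[
	h X h^{-1} \subseteq \overline{h X' h^{-1}} \subseteq \overline{C_G^H(X')},
\]
where the first inclusion uses density of $h X' h^{-1}$ in $h X h^{-1}$ and the second uses $h X' h^{-1} \subseteq C_G^H(X')$. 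Taking the union over $h \in H$ gives $C_G^H(X) \subseteq \overline{C_G^H(X')}$, which is the desired density.

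There is no main obstacle: the statement is purely topological and does not even require $H$ to be connected or irreducible, only that each conjugation map be a homeomorphism of $G$, which is automatic. In particular the closedness of $H$ plays no role in the argument.
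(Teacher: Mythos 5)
Your proof is correct, and it is organized a bit differently from the paper's. The paper considers the morphism $f \colon H \times G \to G$, $(h,g) \mapsto hgh^{-1}$, and argues via
\[
C_G^H(X) = f(H \times X) \subseteq f\bigl(\overline{H \times X'}\bigr) \subseteq \overline{f(H \times X')} = \overline{C_G^H(X')},
\]
which implicitly relies on the fact that $H \times X'$ is dense in $H \times X$ (a Zariski-topology fact about closures of products that needs a small fibrewise argument in itself). You instead fix each $h \in H$ and use that conjugation by $h$ is a homeomorphism of $G$, so $hX'h^{-1}$ is dense in $hXh^{-1}$, and then take the union over $h$. This sidesteps the product-topology issue entirely, is shorter, and, as you correctly observe, makes no use of the closedness (or irreducibility) of $H$ — only of the fact that each inner automorphism is a homeomorphism. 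The paper's formulation via the single morphism $f$ is the one that generalizes naturally when one wants to keep track of regularity rather than mere continuity, but for the present purely topological statement your fibrewise argument is cleaner.
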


\begin{proof}
	Consider the morphism $f \colon H \times G \to G$ given by $(h, g) \mapsto hgh^{-1}$.
	Then we get $C_G^H(X) = f(H \times X) = f(\overline{H \times X'}) \subseteq \overline{f(H \times X')} = \overline{C_G^H(X')}$.
\end{proof}



\begin{corollary}
	\label{Cor.reduction_to_conjugacy_invariant_subset}
	If $X \subseteq G$ is a subset and $X_{\tor}$ is dense in $X$,
	then $(\overline{C_G(X)})_{\tor}$ is dense in $\overline{C_G(X)}$.
\end{corollary}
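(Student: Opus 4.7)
The plan is very short: apply Lemma~\ref{Lem.Density_and_conjugation_classes} with $X' = X_{\tor}$ and $H = G$, together with the obvious fact that the conjugate of a torsion point is again a torsion point.

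More concretely, first I would observe that since $X_{\tor}$ is by hypothesis dense in $X$, Lemma~\ref{Lem.Density_and_conjugation_classes} (applied with $H = G$) yields that $C_G(X_{\tor})$ is dense in $C_G(X)$, and hence also dense in $\overline{C_G(X)}$.

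Next, I would note that for any $g \in G$ and any torsion $t \in G$ the conjugate $gtg^{-1}$ is again torsion, so $C_G(X_{\tor}) \subseteq G_{\tor}$. Combining this with the inclusion $C_G(X_{\tor}) \subseteq C_G(X) \subseteq \overline{C_G(X)}$ we get
\[
C_G(X_{\tor}) \subseteq \overline{C_G(X)} \cap G_{\tor} = (\overline{C_G(X)})_{\tor}.
\]

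Finally, since $C_G(X_{\tor})$ is dense in $\overline{C_G(X)}$ by the first step, this inclusion forces $(\overline{C_G(X)})_{\tor}$ to be dense in $\overline{C_G(X)}$ as well. There is no real obstacle here; the statement is a direct formal consequence of Lemma~\ref{Lem.Density_and_conjugation_classes} and the stability of $G_{\tor}$ under conjugation.
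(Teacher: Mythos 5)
Your proof is correct and follows essentially the same route as the paper: both apply Lemma~\ref{Lem.Density_and_conjugation_classes} with $X' = X_{\tor}$ and $H = G$, and both use that conjugation preserves torsion to conclude. The only cosmetic difference is that the paper records the equality $C_G(X_{\tor}) = C_G(X)_{\tor}$ where you use the inclusion $C_G(X_{\tor}) \subseteq (\overline{C_G(X)})_{\tor}$, which is all that is needed.
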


\begin{proof}[Proof of Corollary~\ref{Cor.reduction_to_conjugacy_invariant_subset}]
	We apply Lemma~\ref{Lem.Density_and_conjugation_classes} to $X' \coloneqq X_{\tor} \subseteq X \subseteq G = H$
	in order to get that $C_G(X_{\tor}) = C_G(X)_{\tor}$ is dense in $C_G(X)$. 
	Hence the statement follows.
\end{proof}

\begin{corollary}
	\label{Cor.Properties_of_SVV}
	Let $X \subseteq G$ be a special subvariety. Then:
	
	\begin{enumerate}[leftmargin=*]
	\item \label{Cor.Properties_of_SVV_1} $X_{\tor}$ is dense in $X$;
	in particular, special subvarieties of $G$ satisfy the assumptions of Theorem~\ref{Thm.MMalg}.
	
	\item  \label{Cor.Properties_of_SVV_2} If $p \colon G \to G'$ is an isogeny of algebraic groups,
	 then the image $p(X)$ is a special subvariety of $G'$.
	\end{enumerate}
\end{corollary}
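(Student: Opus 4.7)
For part (1), the plan is to reduce to the statement $(tS)_{\tor}$ is dense in $tS$ and then spread this density along conjugation orbits. Since $t \in Z_G(S)_{\tor}$ commutes with every element of $S$, for any $s \in S_{\tor}$ one has $(ts)^N = t^N s^N$ for all $N \geq 0$, so $t \cdot S_{\tor} \subseteq (tS)_{\tor}$. Because left-translation by $t$ is a homeomorphism and $S_{\tor}$ is dense in $S$ by hypothesis, this gives that $(tS)_{\tor}$ is dense in $tS$. Applying Lemma~\ref{Lem.Density_and_conjugation_classes} with $X' = (tS)_{\tor} \subseteq X = tS$ and $H = G$, I obtain that $C_G((tS)_{\tor})$ is dense in $C_G(tS)$. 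Since conjugation in $G$ preserves torsion, $C_G((tS)_{\tor}) \subseteq (C_G(tS))_{\tor} \subseteq (\overline{C_G(tS)})_{\tor}$, so the latter set is dense in $C_G(tS)$ and hence in its closure $\overline{C_G(tS)}$.

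For part (2), I would show that all four constituents of the definition of a special subvariety are preserved by $p$. Because $p$ is an isogeny, it is a finite, hence closed, surjective homomorphism, so $p(\overline{C_G(tS)}) = \overline{p(C_G(tS))}$. Using that $p$ is a surjective homomorphism, as $g$ ranges over $G$ its image $p(g)$ ranges over $G'$, which yields $p(C_G(tS)) = C_{G'}(p(t)\,p(S))$. Thus $p(X) = \overline{C_{G'}(p(t)\,p(S))}$, and it remains to check the conditions on $p(S)$ and $p(t)$: the subgroup $p(S) \subseteq G'$ is connected and commutative as the continuous homomorphic image of such a group; $p(t) \in G'_{\tor}$ since homomorphisms send torsion to torsion; $p(t)$ commutes with every element of $p(S)$ because $t$ commutes with every element of $S$ and $p$ is a homomorphism; and finally $(p(S))_{\tor}$ is dense in $p(S)$ because $p$ restricts to a continuous surjection $S \to p(S)$, so by Lemma~\ref{Lem.top}\eqref{Lem.top1} the image $p(S_{\tor}) \subseteq (p(S))_{\tor}$ is dense in $p(S)$.

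I do not expect any serious obstacle; the only point that requires a moment of care is the commutation of $p(\cdot)$ with $\overline{(\cdot)}$ in part (2), which rests on the fact that an isogeny is a finite (hence closed) morphism. Everything else is a direct verification from the definitions combined with the two topological lemmas already established.
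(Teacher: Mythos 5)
Your proposal is correct and takes essentially the same approach as the paper: for part (1) you inline the argument of Corollary~\ref{Cor.reduction_to_conjugacy_invariant_subset} (noting $tS_{\tor}\subseteq(tS)_{\tor}$, then applying Lemma~\ref{Lem.Density_and_conjugation_classes}), and for part (2) you verify the defining data of a special subvariety are preserved under $p$, using closedness of $p$ to commute image with closure — exactly as the paper does.
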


\begin{proof}
	Let $S \subseteq G$ be a commutative connected subgroup and let $t \in Z_G(S)_{\tor}$
	such that $X = \overline{C_G(tS)}$ and  $S_{\tor}$ is dense in $S$.
	
	\eqref{Cor.Properties_of_SVV_1}: Corollary~\ref{Cor.reduction_to_conjugacy_invariant_subset}
		implies that $(\overline{C_G(tS)})_{\tor}$ is dense in $\overline{C_G(tS)}$,
		since $t S_{\tor}$ is dense in $tS$.
		
	\eqref{Cor.Properties_of_SVV_2}: $p(S)$ is a closed commutative connected subgroup of $G'$.
	Moreover, since $p$ is an isogeny, we get $p(S)_{\tor} = p(S_{\tor})$ and this set
	is dense in $p(S)$ by Lemma~\ref{Lem.top}\eqref{Lem.top1}. 
	Using that $t \in Z_G(S)_{\tor}$, we get $p(t) \in Z_{G'}(p(S))_{\tor}$. Note that $p(C_G(tS)) = C_{G'}(p(t)p(S))$.
	Using that $p(\overline{Y}) = \overline{p(Y)}$ for any subset $Y \subseteq G$ (here we use that $p$ is closed),
	the claim follows.
\end{proof}

\subsection{Intersection of special subvarieties}
\label{Subsect.Intersection_SUV}

The following example shows that the intersection of two special subvarieties of a connected algebraic group
need not be the finite union of special subvarieties:

\begin{example}
	\label{Exa.Intersection_of_SUV is_not_SUV}
	Let $G = \GL_2(\kk)$ and for $k \geq 2$ consider
	\[
		S_k \coloneqq \Bigset{
		A_{k, \lambda}}{\lambda \in \kk^\ast} \subseteq 
		T \coloneqq 
		\Bigset{
		\begin{pmatrix}
			\lambda_1 & 0 \\
			0 & \lambda_2
		\end{pmatrix}
		}{\lambda_1, \lambda_2 \in \kk^\ast}
		\subseteq \GL_2(\kk) \, ,
	\]
	where
	\[
		A_{k, \lambda} = \begin{pmatrix}
					\lambda^k & 0 \\
					 0 & \lambda
				\end{pmatrix} \in \GL_2(\kk) \, .
	\]
	The algebraic quotient by conjugation on $\GL_2(\kk)$ is then given by 
	\[
		\pi \colon \GL_2(\kk) \to Z \coloneqq \AA^1 \times (\AA^1 \setminus \{0\}) \, , \quad
		A \mapsto (\tr(A), \det(A))
		\, , 
	\]
	where $\tr(A)$ denotes the trace of $A$, see 
	\cite[II.3.3, Beispiel~2]{Kr1984Geometrische-Metho}. Moreover, denote by
	\[
		\rho \coloneqq \pi |_T \colon T \to Z \, , \quad
		\begin{pmatrix}
			\lambda_1 & 0 \\
			0 & \lambda_2
		\end{pmatrix} \mapsto (\lambda_1 + \lambda_2, \lambda_1 \lambda_2)
	\]
	the restriction of $\pi$ to $T$.
	Since $\rho$ is finite (it is the geometric quotient by the
	$\ZZ/2\ZZ$-action that exchanges the two diagonal elements,
	see e.g.~Corollary~\ref{Cor.Restriction_of_alg_quotient_general}), it follows that
	$\rho(S_k)$ is an irreducible closed curve in $Z$.
	Hence,
	\begin{equation}
		\label{Eq.Example_Inclusion}
		\overline{C_{\GL_2(\kk)}(S_k)} \subseteq \pi^{-1}(\rho(S_k)) \, .
	\end{equation}
	On the other hand, we have $\lambda^k = \lambda$ if and only if $\lambda$ is a $(k-1)$-th root of unity.
	Thus if $\lambda^{k-1} \neq 1$, then the fibre $\pi^{-1}(\rho(A_{k, \lambda}))$
	consists of a single conjugacy class.
	Hence, the non-empty open subvariety
	\[
		U_k \coloneqq \pi^{-1}(\rho(\set{A_{k, \lambda}}{\lambda^{k-1} \neq 1}))
	\]
	of $\pi^{-1}(\rho(S_k))$ is contained in $\overline{C_{\GL_2(\kk)}(S_k)}$. Since the fibres of $\pi$ are all
	two-dimensional (Corollary~\ref{Cor.Conjugacy_classes_general}\eqref{Cor.Conjugacy_classes_general5}),
	and since $\pi^{-1}(\rho(S_k))$ is irreducible (Corollary~\ref{Cor.Preimage_irred_under_alg_quotient_general}), 
	it follows that the inclusion in~\eqref{Eq.Example_Inclusion} is in fact an equality.
	Moreover $C_{\GL_2(\kk)}(S_k) = U_k \cup S_k$ is a constructible, but non-open subset in its closure.
	Hence, $C_{\GL_2(\kk)}(S_k)$ is not locally closed in $\GL_2(\kk)$.
	
	Now, let $k_1, k_2 \geq 2$ be distinct. If $\lambda, \mu \in \kk^\ast$, then we have
	\[
		\rho(A_{k_1, \lambda}) = \rho \begin{pmatrix}
			\lambda^{k_1} & 0 \\
			0 & \lambda
		\end{pmatrix}
		= 
		\rho \begin{pmatrix}
					\mu^{k_2} & 0 \\
					0 & \mu
				\end{pmatrix}
		= \rho(A_{k_2, \mu})
	\]
	exactly when $\{ \lambda^{k_1}, \lambda \} = \{ \mu^{k_2}, \mu \}$. This last condition is equivalent to the fact that
	either $\mu = \lambda$, $\lambda^{k_1-k_2} = 1$ or $\mu = \lambda^{k_1}$, $\lambda^{k_1 k_2 -1} = 1$.
	This implies
	\begin{eqnarray*}
		\overline{C_{\GL_2(\kk)}(S_{k_1})} \cap \overline{C_{\GL_2(\kk)}(S_{k_2})}
		&=& \pi^{-1}(\rho(S_{k_1}) \cap \rho(S_{k_2})) \\
		&=& \coprod_{\substack{\lambda^{k_1 k_2-1} = 1 \ \textrm{or} \\ \lambda^{k_1-k_2} =1}} \pi^{-1}(\rho(A_{k_1, \lambda})) \, .
	\end{eqnarray*}
	Since $N \coloneqq \pi^{-1}(\rho(A_{k_1,1}))$ is two dimensional, but contains only the identity matrix
	$A_{k_1, 1}$ as a semi-simple $\GL_2(\kk)$-conjugacy class, the torsion points $N_{\tor}$ are not dense in $N$
	(see Corollary~
	\ref{Cor.Conjugacy_classes_general}\eqref{Cor.Conjugacy_classes_general2},\eqref{Cor.Conjugacy_classes_general3}).
	Thus the intersection $\overline{C_{\GL_2(\kk)}(S_{k_1})} \cap \overline{C_{\GL_2(\kk)}(S_{k_2})}$ 
	cannot be the finite union of special subvarieties in $\GL_2(\kk)$
	according to Corollary~\ref{Cor.Properties_of_SVV}\eqref{Cor.Properties_of_SVV_1}.
\end{example}

\subsection{The proof of the Manin-Mumford theorem for algebraic groups}

Before we give a proof of Theorem~\ref{Thm.MMalg} we establish the case when the 
connected algebraic group is
a certain product and reduce then Theorem~\ref{Thm.MMalg} to this special case.

\begin{prop}
	\label{Prop.MMProduct}
	Let $G = F \times L$ where $F$ is a connected commutative group and $L$ is a connected 
	affine reductive group.
	If $X \subseteq G$ is closed, stable under $G$-conjugation and $X_{\tor}$ is dense in $X$,
	then $X$ is a finite union of special subvarieties in $G$.
\end{prop}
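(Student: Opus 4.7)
My plan is to reduce to the commutative Manin-Mumford theorem (Theorem~\ref{Thm.MM}) applied to the closed slice $X_T \coloneqq X \cap (F \times T)$ in the commutative algebraic group $F \times T$, where $T \subseteq L$ is a maximal torus, and then recover $X$ from $X_T$ by taking $\overline{C_G(X_T)}$. For the set-up, since $L$ is reductive and affine we have $R_u(L) = 1$ and $L_{\ant} = 1$, so Corollary~\ref{Cor.Restriction_of_alg_quotient_general} applied to $L$ says that $\pi_L|_T \colon T \to L \aquot L$ is a geometric quotient by the Weyl group $W = N_L(T)/T$. Consequently
\[
    \rho \coloneqq \id_F \times \pi_L|_T \colon F \times T \longrightarrow F \times (L \aquot L)
\]
is a geometric quotient by the $W$-action acting trivially on $F$, and it is the restriction to $F \times T$ of the $G$-conjugation-invariant morphism $\pi \coloneqq \id_F \times \pi_L \colon G \to F \times (L \aquot L)$ (note that $G$-conjugation on $F \times L$ is trivial on $F$ and the usual $L$-conjugation on $L$, since $F$ is central).

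The technical heart is to verify that $(X_T)_{\tor}$ is dense in the closed, $W$-stable set $X_T$. First I would show $\pi(X) = \rho(X_T)$: given $(f, \ell) \in X$, Corollary~\ref{Cor.Conjugacy_classes_general}\eqref{Cor.Conjugacy_classes_general3} applied to $L$ places $\ell_s$ in $\overline{C_L(\ell)}$, so $(f, \ell_s) \in \overline{C_G((f,\ell))} \subseteq X$ by closedness and $G$-stability; conjugating $\ell_s$ into $T$ by a suitable element of $L$ then yields some $(f, t) \in X_T$ with $\pi(f, t) = \pi(f, \ell)$. Second, every $(f, \ell) \in X_{\tor}$ has $\ell$ torsion in $L$, hence (in characteristic zero) semisimple, hence $L$-conjugate to a torsion element $t \in T$; by $G$-stability this produces a torsion element $(f, t) \in (X_T)_{\tor}$ in the $G$-conjugacy class of $(f, \ell)$. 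Combining these two facts gives $\pi(X_{\tor}) \subseteq \rho((X_T)_{\tor})$, and since $\pi(X_{\tor})$ is dense in $\pi(X) = \rho(X_T)$ by Lemma~\ref{Lem.top}\ref{Lem.top1}, so is $\rho((X_T)_{\tor})$. Corollary~\ref{Cor.Geometric_quotient_density} applied to the geometric quotient $\rho|_{X_T} \colon X_T \to \rho(X_T)$ by the finite group $W$ then transfers density back to $X_T$, because $\rho^{-1}(\rho((X_T)_{\tor})) \cap X_T = W \cdot (X_T)_{\tor} = (X_T)_{\tor}$.

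Once this density is in hand, Theorem~\ref{Thm.MM} applied in the commutative group $F \times T$ produces
\[
    X_T = \bigcup_{i=1}^r t_i S_i
\]
with connected commutative algebraic subgroups $S_i \subseteq F \times T$, torsion points $t_i \in (F \times T)_{\tor}$, and $(S_i)_{\tor}$ dense in $S_i$. The same semisimple-conjugation argument as above yields $X_{\tor} \subseteq C_G((X_T)_{\tor})$, so combined with the trivial inclusion $C_G(X_T) \subseteq X$ from $G$-stability we conclude
\[
    X = \overline{X_{\tor}} = \overline{C_G(X_T)} = \bigcup_{i=1}^r \overline{C_G(t_i S_i)}.
\]
Each summand is a special subvariety in the sense of Definition~\ref{Def.Special_subvarieties}: $S_i$ is a commutative connected algebraic subgroup of $G$ with dense torsion, $t_i \in G_{\tor}$, and $t_i \in Z_G(S_i)$ because $t_i$ and $S_i$ both lie in the abelian group $F \times T$. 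The main obstacle is the density transfer in the middle paragraph: it leans decisively on the reductivity of $L$, both to have the categorical quotient $\pi_L$ collapsing each fibre of $\pi$ to a unique semisimple conjugacy class (giving $\pi(X) = \rho(X_T)$) and to make $\rho$ a \emph{finite} quotient (so Corollary~\ref{Cor.Geometric_quotient_density} applies). Both features break for a general affine $H$ in place of $L$, which is what will force the stratified argument of Proposition~\ref{Prop.MMProduct_gen}.
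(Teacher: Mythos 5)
Your proof is correct and follows essentially the same strategy as the paper: slice $X$ by $F \times T$, transfer the torsion density to $X_T = X \cap (F \times T)$ using the finite geometric quotient $\rho$ and Corollary~\ref{Cor.Geometric_quotient_density}, apply Theorem~\ref{Thm.MM} in the commutative group $F \times T$, and reconstitute $X$ as $\bigcup_i \overline{C_G(t_i S_i)}$. The paper organizes the density transfer slightly differently, establishing the fibre-exact identity $\rho^{-1}(\pi(X)) = X_T$ via Corollary~\ref{Cor.Conjugacy_classes_general}\eqref{Cor.Conjugacy_classes_general6} and the equality $\rho^{-1}(\pi(G_{\tor})) = F_{\tor} \times T_{\tor}$, whereas you argue via $\pi(X) = \rho(X_T)$ and $\pi(X_{\tor}) \subseteq \rho((X_T)_{\tor})$ from the Jordan decomposition; these are equivalent bookkeeping choices and the substance is the same.
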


\begin{proof}	
	Let $\pi_L \colon L \to L \aquot L$ be the algebraic quotient of the $L$-action by conjugation on $L$
	and let $\rho_L \coloneqq \pi_L |_T \colon T \to L \aquot L$ be its restriction to a
	maximal torus $T$ in $L$. Then $\rho_L$ is the geometric quotient of $T$ by conjugation 
	with the Weyl group of $L$ with respect to $T$, see Corollary~\ref{Cor.Restriction_of_alg_quotient_general}.
	Moreover, consider the morphisms
	\[
	\pi \coloneqq \id_{F} \times \pi_L \colon F \times L \to F \times (L \aquot L)
	\] 
	and
	\[
	\rho \coloneqq \id_{F} \times \rho_L \colon F \times T \to F \times (L \aquot L) \, .
	\]
	Let $Y \coloneqq \pi(X)$. We have
	\begin{equation}
		\label{Eq.Y}
		\rho^{-1}(Y) = X \cap (F \times T) \, , 
	\end{equation}
	since we get for all $f \in F$ by Corollary~\ref{Cor.Conjugacy_classes_general}\eqref{Cor.Conjugacy_classes_general6} that
	\begin{eqnarray*}
		\rho^{-1}(Y) \cap (\{f\} \times T) &=& \pi^{-1}(\pi(X \cap (\{f\} \times L))) \cap (\{f\} \times T) \\
		&=&  \{f\} \times \left( \pi_L^{-1}(\pi_L(\set{l \in L}{(f, l) \in X})) \cap T \right) \\
		&=& \{f\} \times (\set{l \in L}{(f, l) \in X} \cap T) \\
		&=& X \cap (\{f\} \times T)
	\end{eqnarray*}
	(we used the fact that $\set{l \in L}{(f, l) \in X}$ is stable under conjugation by $L$ and it is closed in $L$).
	Since $\rho_L \colon T \to L \aquot L$ is finite, the same folds for $\rho$. In particular
	\[
	Y \stackrel{\eqref{Eq.Y}}{=} \rho(X \cap (F \times T))
	\]
	is closed in $F \times T$.
	
	By Lemma~\ref{Lem.top}\eqref{Lem.top1} $\pi(X_{\tor})$ is dense in $Y$. 
	In particular, $Y \cap \pi(G_{\tor})$ is dense in $Y$.
	By Corollary~\ref{Cor.Geometric_quotient_density} applied to the geometric quotient 
	$\rho |_{\rho^{-1}(Y)} \colon \rho^{-1}(Y) \to Y$ and the dense subset $Y \cap \pi(G_{\tor})$ of $Y$,
	we get that $\rho^{-1}(Y) \cap \rho^{-1}(\pi(G_{\tor}))$ is dense in $\rho^{-1}(Y)$. 
	
	Note that $G_{\tor} = F_{\tor} \times C_L(T_{\tor})$ and therefore 
	$\pi(G_{\tor}) = F_{\tor} \times \rho_L(T_{\tor})$. Hence we get
	\begin{equation}
		\label{Eq.tor}
		\rho^{-1}(\pi(G_{\tor})) = 
		\rho^{-1}(F_{\tor} \times \rho_L(T_{\tor})) = 
		F_{\tor} \times T_{\tor} \, ,
	\end{equation}
	since $T_{\tor}$ is invariant under the Weyl group action
	by conjugation on $T$.

	Using~\eqref{Eq.Y}, \eqref{Eq.tor}, it follows that
	$X \cap (F_{\tor} \times T_{\tor})$ is dense in $X \cap (F \times T)$.
	By applying the
	Manin-Mumford Theorem for commutative algebraic groups 
	(see Theorem~\ref{Thm.MM}) to $X \cap (F \times T)$ 
	inside $F \times T$, 
	we get connected closed subgroups $S_1, \ldots, S_m$ of $F \times T$ and
	torsion points $h_1, \ldots, h_m \in F \times T$ with 
	\begin{equation}
		\label{Eq.MM_comm}
		X \cap (F \times T) = \bigcup_{i=1}^m h_i S_i
	\end{equation}
	such that $(S_i)_{\tor}$ is dense in $S_i$ for all $i$.
	Hence, $\overline{C_G(h_iS_i)}$ is a special subvariety of $G$ for all $i$. 
	
	Since $X$ is stable under $G$-conjugation and since $X$ is closed in $G$, we get
	\[
	X_{\tor} \subseteq C_G(X \cap (F \times T)) 
	\stackrel{\eqref{Eq.MM_comm}}{=} \bigcup_{i=1}^m C_G(h_i S_i)
	\subseteq \bigcup_{i=1}^m \overline{C_G(h_i S_i)} 
	\stackrel{\eqref{Eq.MM_comm}}{\subseteq} X \, .
	\]
	Since $X_{\tor}$ is dense in $X$, the result follows.
\end{proof}

Using Proposition~\ref{Prop.MMProduct} we can generalize to the case where $H$ is an arbitrary  connected affine algebraic group.

\begin{prop}
	\label{Prop.MMProduct_gen}
	Let $G = F \times H$ where $F$ is connected, commutative and $H$ is connected, affine.
	If $X \subseteq G$ is a closed irreducible subvariety that is stable under $G$-conjugation and 
	$X_{\tor}$ is dense in $X$,
	then $X$ is a special subvariety in $G$.
\end{prop}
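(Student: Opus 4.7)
The plan is to reduce the statement to Proposition~\ref{Prop.MMProduct} by splitting off the unipotent radical through a Levi subgroup. Set $U \coloneqq R_u(H)$ and, using Mostow's theorem (valid in characteristic zero), fix a Levi decomposition $H = M \ltimes U$ with $M$ connected and reductive. Let $q \colon H \to L \coloneqq H/U$ be the quotient; it restricts to an isomorphism of algebraic groups $q|_M \colon M \to L$. Writing $\tilde{q} \coloneqq \id_F \times q \colon G \to F \times L \eqqcolon G'$, the first key observation is that every torsion element of $H$ is semisimple (in characteristic zero, uniqueness of the Jordan decomposition applied to the identity forces the unipotent part of a torsion element to be trivial), and every semisimple element of $H$ is $U$-conjugate to an element of $M$ (all Levi complements of $U$ being $U$-conjugate). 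Together with the $G$-conjugation stability of $X$, this implies that for every $(f, h) \in X_{\tor}$ there exists $u \in U$ with $(f, u h u^{-1}) \in X_M \coloneqq X \cap (F \times M)$; in particular $X_{\tor} \subseteq C_G(X_M)$.

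Let $X' \coloneqq \overline{\tilde{q}(X)}$; this is closed, irreducible, and $G'$-conjugation stable. The previous paragraph, together with $\tilde{q}(\{1\} \times U) = \{(1,1)\}$, gives $\tilde{q}(X_{\tor}) \subseteq \tilde{q}(X_M)$. Since $\tilde{q}|_{F \times M}$ is an isomorphism onto $G'$, the image $\tilde{q}(X_M)$ is closed in $G'$; and since $\tilde{q}(X_{\tor})$ is dense in $X'$ (as $X_{\tor}$ is dense in $X$ and $\tilde{q}$ is continuous), we deduce $X' = \tilde{q}(X_M)$, and in particular $X'_{\tor}$ is dense in $X'$. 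Proposition~\ref{Prop.MMProduct}, applied to $G'$, together with the irreducibility of $X'$, then yields $X' = \overline{C_{G'}(h' S')}$ for some connected commutative subgroup $S' \subseteq G'$ with $S'_{\tor}$ dense in $S'$ and some $h' \in Z_{G'}(S')_{\tor}$.

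To conclude, lift through the inverse of $\tilde{q}|_{F \times M}$: let $h_0 \in (F \times M)_{\tor}$ and $S_0 \subseteq F \times M$ be the preimages of $h'$ and $S'$, respectively. Then $S_0$ is a connected commutative subgroup of $G$ with $(S_0)_{\tor}$ dense in $S_0$, and $h_0 \in Z_{F \times M}(S_0) \subseteq Z_G(S_0)$ is torsion in $G$; so $\overline{C_G(h_0 S_0)}$ is a special subvariety of $G$. Pulling the equality $X' = \overline{C_{G'}(h' S')}$ back through the isomorphism $\tilde{q}|_{F \times M}$ gives $X_M = \overline{C_{F \times M}(h_0 S_0)}$. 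Combined with $X = \overline{C_G(X_M)}$ (which follows from $X_{\tor} \subseteq C_G(X_M) \subseteq X$, the density of $X_{\tor}$, and the closedness of $X$) and the elementary identities $\overline{C_G(\overline{A})} = \overline{C_G(A)}$ and $C_G(C_{F \times M}(A)) = C_G(A)$, this yields $X = \overline{C_G(h_0 S_0)}$, as desired. The main technical point is the equality $X' = \tilde{q}(X_M)$: it salvages the density of torsion after passing to the reductive quotient, in spite of the generally pathological behaviour of torsion under the quotient by the unipotent radical (cf.\ Example~\ref{Exa.Torsion_in_Borel}).
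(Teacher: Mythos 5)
Your proof is correct and takes a genuinely different, and arguably cleaner, route than the paper's. The paper's proof hinges on the technical Lemma~\ref{Lem.Stratification}, a Rosenlicht-type stratification of $H$ into locally closed pieces that admit geometric quotients for the $R_u(H)$-conjugation action; it then identifies the unique stratum whose trace on $X$ is dense, passes to the open dense subset $X_0 = X \cap (F \times C_H^V(H_j \cap L))$, and shows $X_0 = C_G^V(q(X_0))$ via the local quotient structure. Your approach sidesteps the stratification entirely. The substitute idea is that, in characteristic zero, torsion elements of $H$ are automatically semisimple, hence $U$-conjugate into the fixed Levi $M$; combined with $G$-conjugation stability of $X$, this gives $X_{\tor} \subseteq C_G(X_M)$ for the \emph{closed} subset $X_M = X \cap (F \times M)$. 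Since $\tilde q(X_M)$ is closed (being an isomorphic image of a closed set) and contains the dense $\tilde q(X_{\tor})$, you get the exact equality $\overline{\tilde q(X)} = \tilde q(X_M)$ rather than an approximation on an open dense locus, and the rest is a direct pullback through the isomorphism $\tilde q|_{F \times M}$. This is shorter, renders Lemma~\ref{Lem.Stratification} unnecessary for this proposition, and makes the role of characteristic zero more transparent.

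One small remark on justification: your parenthetical \enquote{all Levi complements of $U$ being $U$-conjugate} is not by itself sufficient to conclude that a semisimple $h \in H$ is $U$-conjugate into $M$; you also need that $h$ lies in \emph{some} Levi complement. The cleanest route is the one the paper takes in the proof of Lemma~\ref{Lem.Stratification}\ref{Lem.Stratification4}: $h$ lies in a maximal torus of $H$, all such are $H$-conjugate, maximal tori of $M$ are maximal tori of $H$, and writing the conjugating element as $mv$ with $m \in M$, $v \in U$ shows $vhv^{-1} \in M$. With this justification spelled out, your argument is complete.
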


For the proof we need the following lemma:

\begin{lemma}
	\label{Lem.Stratification}
	Let $H$ be a connected affine algebraic group with unipotent radical $V \coloneqq R_u(H)$.
	Write $H  = V L$, where $L$ is a reductive subgroup of $H$ (Levi-decomposition) and denote by
	$\pi \colon H \to L$ the surjection with kernel $V$ that restricts to the identity on $L$. 
	Then $H$ is the disjoint union of
	subsets $H_1, \ldots, H_m \subseteq H$ such that for all $1 \leq i \leq m$ we have: 
	\begin{enumerate}[leftmargin=*, label=\alph*)]
		\item \label{Lem.Stratification1} $H_i$ is locally closed in $H$.
		\item \label{Lem.Stratification2} $H_i$ is stable under $V$-conjugation.
		\item \label{Lem.Stratification3} The geometric quotient by $V$-conjugation on $H_i$ exists.
		\item \label{Lem.Stratification4} $(H_i)_{\tor}$ is contained in $C^V_{H}(H_i \cap L)$.
		\item \label{Lem.Stratification5} $C^V_{H}(H_i \cap L)$ is locally closed in $H$. 
		\item \label{Lem.Stratification6} $\pi|_{C_H^V(H_i \cap L)} \colon C_H^V(H_i \cap L) \to H_i \cap L$ is a geometric quotient
		for the $V$-conjugation.
	\end{enumerate}
\end{lemma}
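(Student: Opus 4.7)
The plan is to exploit the Levi decomposition $H = V L$ and the projection $\pi \colon H \to L$ with $\pi(vl) = l$. A direct calculation shows that $\pi$ is invariant under $V$-conjugation: since $V$ is normal in $H$, for $u \in V$ and $h = vl$ we have $u h u^{-1} = uv(lu^{-1}l^{-1}) l \in Vl$, hence $\pi(uhu^{-1}) = l = \pi(h)$. Two further facts are essential. In characteristic zero, every torsion element of $H$ is semisimple, since a unipotent torsion element must be the identity via the Jordan decomposition. Moreover, by the theorem on conjugacy of Levi complements (Mostow), applied to the reductive subgroup generated by a given semisimple element, every semisimple element of $H$ is $V$-conjugate to an element of $L$; and since $V \cap L = \{e\}$ in the Levi decomposition, distinct elements of $L$ give disjoint $V$-conjugation orbits (if $vlv^{-1} = l'$ with $l, l' \in L$, then $vlv^{-1}l^{-1} \in V \cap L = \{e\}$, forcing $l' = l$).

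Next, I would proceed by noetherian induction on $\dim H$, at each step producing one locally closed $V$-stable subset $H_1 \subseteq H$ that is open in $H$ and satisfies all six properties, and handling the closed complement (of strictly smaller dimension) inductively. To construct $H_1$, first stratify $L$ by the upper semi-continuous function $l \mapsto \dim C_V(l)$ and take the open stratum $L^\circ$ where this attains its minimum value $k_0$. The preimage $\pi^{-1}(L^\circ)$ is an open $V$-stable subset of $H$. Then Rosenlicht's theorem applied to the $V$-conjugation action on $\pi^{-1}(L^\circ)$ yields an open dense $V$-invariant subset $H_1 \subseteq \pi^{-1}(L^\circ)$ on which a geometric $V$-quotient exists; by shrinking further if necessary, one arranges that $H_1$ still contains the $V$-saturation $C_H^V(L^\circ)$ and satisfies $H_1 \cap L = L^\circ$.

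Properties (a) and (b) are then immediate, and (c) holds by construction via Rosenlicht. For (d), a torsion element $h \in H_1$ is semisimple, hence $V$-conjugate to some $l \in L$, and by $V$-stability of $H_1$ one has $l \in H_1 \cap L = L^\circ$, giving $h \in C_H^V(H_1 \cap L)$. For (e) and (f), the set $C_H^V(H_1 \cap L) = C_H^V(L^\circ)$ is the image of the morphism $V \times L^\circ \to H$, $(v,l) \mapsto vlv^{-1}$; since orbits through $L^\circ$ have constant dimension $\dim V - k_0$ and different elements of $L^\circ$ yield disjoint orbits, this image is locally closed in $H$, and $\pi$ restricted to it is surjective onto $L^\circ$ with fibres exactly the $V$-orbits, hence a geometric quotient. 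The main obstacle I expect is coordinating (c) with (f) at each inductive step: Rosenlicht provides a geometric quotient only on an a priori unspecified open subset of $\pi^{-1}(L^\circ)$, and one must verify that $H_1$ can be chosen to contain $C_H^V(L^\circ)$ simultaneously, which is possible because the constant orbit-dimension on $L^\circ$ makes the semisimple stratum well-behaved under further shrinking.
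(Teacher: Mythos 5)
You have correctly identified the essential ingredients: Rosenlicht's theorem, the fact that in characteristic zero torsion elements are semisimple, the observation that every semisimple element of $H$ is $V$-conjugate to an element of the Levi complement $L$, and the fact that $V \cap L = \{e\}$ makes $L$ a cross-section for the $V$-conjugacy classes of semisimple elements. Your argument for \ref{Lem.Stratification4} is fine. However, your treatment of \ref{Lem.Stratification5} and \ref{Lem.Stratification6} has a genuine gap, and it is precisely the place where the paper's argument does something more subtle.

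You claim that because the $V$-orbits through $L^\circ$ have constant dimension and are pairwise disjoint, the image $C_H^V(L^\circ)$ of the morphism $V \times L^\circ \to H$, $(v,l) \mapsto vlv^{-1}$, is locally closed, and that since $\pi$ restricted to it has fibres exactly the $V$-orbits, it is a geometric quotient. Neither of these steps follows. Constant fibre dimension of a morphism does not imply that the image is locally closed (in general it is only constructible), and ``fibres equal orbits'' is a necessary but not sufficient condition for a geometric quotient: one must also check the quotient-topology condition and that the structure sheaf downstairs is the sheaf of invariants. You do not supply either verification, and they are not automatic. A related but separate issue, which you yourself flag as the ``main obstacle'', is that Rosenlicht provides the geometric quotient only on an unspecified open dense subset of $\pi^{-1}(L^\circ)$; there is no mechanism to shrink that open set so that it contains all of $C_H^V(L^\circ)$, nor so that its intersection with $L$ is exactly $L^\circ$. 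Your claim that this coordination is possible ``because the constant orbit-dimension on $L^\circ$ makes the semisimple stratum well-behaved under further shrinking'' is not an argument.

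The paper circumvents both difficulties by invoking not just Rosenlicht's theorem but the refinement due to Greuel--Pfister for unipotent actions: one obtains a decomposition $H = \bigcup_i H_i$ into locally closed $V$-stable pieces such that each $H_i$ admits a geometric quotient $\pi_i \colon H_i \to Q_i$ and moreover $H_i \cong \AA^{n_i} \times Q_i$ over $Q_i$. This extra trivial-bundle structure is what makes \ref{Lem.Stratification5} and \ref{Lem.Stratification6} tractable. Setting $Z_i \coloneqq H_i \cap \pi^{-1}(H_i \cap L)$ (automatically locally closed and $V$-stable), the triviality forces $\pi_i(Z_i)$ to be locally closed and $\pi_i|_{Z_i}$ to remain a geometric quotient. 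Then, because $\pi|_{Z_i}$ is $V$-invariant, it factors through $\pi_i(Z_i)$ via a morphism $\rho_i$, and the composite $H_i \cap L \hookrightarrow Z_i \to \pi_i(Z_i) \xrightarrow{\rho_i} H_i \cap L$ is the identity. This splitting implies at once that $\pi_i(H_i \cap L)$ is closed in $\pi_i(Z_i)$ and that $\rho_i$ restricts to an isomorphism over it; hence $C_H^V(H_i\cap L) = \pi_i^{-1}(\pi_i(H_i\cap L))$ is closed in $Z_i$ (so locally closed in $H$), and $\pi$ restricted to it is a geometric quotient by pulling back the bundle trivialization. If you wish to keep your more direct approach of stratifying $L$ by $\dim C_V(l)$, you would need to either import the Greuel--Pfister trivialization or supply a genuine proof that the $V$-saturation of a closed cross-section with constant orbit dimension is closed in $\pi^{-1}(L^\circ)$ and that the cross-section section realizes a geometric quotient; as written, that is the missing content.
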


\begin{proof}
	By a repeated application of a certain version  of Rosenlicht's theorem (see 
	\cite[Ch.~14, Theorem~5.3]{SaRi2017Actions-and-invari}
	and \cite[Proposition~1.6 and Remark~1.7(1)]{GrPf1993Geometric-Quotient})
	to $V$-conjugation on $H$, we may decompose $H$ into disjoint locally closed
	subsets
	\[
	H = \bigcup_{i=1}^m H_i
	\]	
	such that each $H_i$ is stable under $V$-conjugation, a geometric quotient
	$\pi_i \colon H_i \to Q_i$ for the $V$-conjugation on $H_i$ exists and
	$H_i$ is isomorphic to $\AA^{n_i} \times Q_i$ over $Q_i$ for some $n_i \geq 0$, 
	i.e.~there exists an isomorphism $\eta_i \colon \AA^{n_i} \times Q_i \to H_i$ 
	 such that $\pi_i \circ \eta_i \colon \AA^{n_i} \times Q_i \to Q_i$
	is the canonical projection. 
	Hence,~\ref{Lem.Stratification1}, ~\ref{Lem.Stratification2} and~\ref{Lem.Stratification3} hold.
	
	\ref{Lem.Stratification4}: If $t \in (H_i)_{\tor}$, then $t$ is contained in a maximal torus of $H$
	by \cite[Theorem~22.2]{Hu1975Linear-algebraic-g}. Since every maximal torus of $L$
	is a maximal torus in $H$ and since all maximal tori in $H$ are conjugate,
	there exists $h \in H$ such that $hth^{-1} \in L$. Writing $h = lv$ for $l \in L$ and $v \in V$
	gives $vtv^{-1} \in L$. Since $H_i$ is stable under $V$-conjugation, we get $vtv^{-1} \in L \cap H_i$
	
	\ref{Lem.Stratification5} and~\ref{Lem.Stratification6}:  
	Let $Z_i \coloneqq H_i \cap \pi^{-1}(H_i \cap L) \subseteq H$. Since $H_i$, $\pi^{-1}(H_i \cap L)$
	are both locally closed in $H$ and stable by $V$-conjugation, the same holds for $Z_i$.
	Since $H_i$ is isomorphic to $\AA^{n_i} \times Q_i$ over $Q_i$, it follows that
	$\pi_i(Z_i)$ is locally closed in $Q_i$ and the restriction
	$\pi_i |_{Z_i} \colon Z_i \to \pi_i(Z_i)$ is a geometric quotient by $V$-conjugation
	(we use that $\pi_i$ is a geometric quotient).
	Since the morphism $\pi |_{Z_i} \colon Z_i \to H_i \cap L$ is
	invariant under $V$-conjugation, there exists a morphism $\rho_i \colon \pi_i(Z_i) \to H_i \cap L$
	such that $\pi |_{Z_i} = \rho_i \circ \pi_i |_{Z_i}$. Hence,
	\[
	H_i \cap L \subseteq Z_i \xrightarrow{\pi_i |_{Z_i}} \pi_i(Z_i) \xrightarrow{\rho_i}
	H_i \cap L
	\]
	is the identity. This implies that $\pi_i(H_i \cap L)$ is closed in $\pi_i(Z_i)$
	and 
	\[
	\rho_i |_{\pi_i(H_i \cap L)} \colon \pi_i(H_i \cap L) \to H_i \cap L
	\]
	is an isomorphism. Thus $C^V_{H}(H_i \cap L) = \pi_i^{-1}(\pi_i(H_i \cap L))$ is closed in $\pi_i^{-1}(\pi_i(Z_i)) = Z_i$
	and the restriction $\pi |_{C^V_{H}(H_i \cap L)} \colon C^V_{H}(H_i \cap L) \to H_i \cap L$
	is a geometric quotient for the $V$-conjugation
	 (note that
	 $(\pi_i |_{Z_i}) \circ \eta_i |_{\AA^{n_i} \times \pi_i(Z_i)} \colon \AA^{n_i} \times \pi_i(Z_i) \to \pi_i(Z_i)$ is the canonical projection).
\end{proof}

\begin{proof}[Proof of Proposition~\ref{Prop.MMProduct_gen}]
	We use the notation of Lemma~\ref{Lem.Stratification}.
	Moreover, we denote $q \coloneqq \id_F \times \pi \colon F \times H \to F \times L$.

	Since $X$ is irreducible
	there exists a unique $j$ such that $X_1 \coloneqq X \cap (F \times H_j)$ is dense in $X$.
	Since $X_1$ is locally closed in $X$, it follows that $X_1$ is open in $X$. 
	Using that $X_{\tor}$ is dense in $X$, we get that
	$(X_1)_{\tor}$ is dense in $X_1$.
	
	Let
	\[
		X_0 \coloneqq X \cap (F \times C_H^V(H_j \cap L)) \subseteq F \times H \, .
	\]
	Since
	\[
		(X_1)_{\tor} \subseteq 
		X \cap (F \times (H_j)_{\tor}) 
		\stackrel{\textrm{Lem.~\ref{Lem.Stratification}\ref{Lem.Stratification4}}}{\subseteq}  X_0 \, ,
	\]
	it follows that $X_0$ is dense in $X$. By Lemma~\ref{Lem.Stratification}\ref{Lem.Stratification5}, $X_0$ is locally closed in $X$
	and hence $X_0$ is open and dense in $X$.
	
	Note that $(\overline{q(X)})_{\tor}$ is dense in $\overline{q(X)}$ by 
	Lemma~\ref{Lem.top}\eqref{Lem.top1} and that $\overline{q(X)}$ is stable under $F \times L$-conjugation
	(since $X$ is stable under $G$-conjugation). 
	An application of Proposition~\ref{Prop.MMProduct} gives now a connected
	commutative closed subgroup $S \subseteq F \times L$ and 
	$t \in Z_{F \times L}(S)_{\tor}$ such that $S_{\tor}$ is dense in $S$ and
	\begin{equation}
		\label{Eq.MMProduct}
		\overline{q(X)} = \overline{C_{F \times L}(tS)} \subseteq F \times L \, .
	\end{equation}
	
	Using that $q(X_0)$ is constructible and dense in $\overline{q(X)}$
	and that $C_{F \times L}(tS)$ is constructible and dense in $\overline{q(X)}$ (by~\eqref{Eq.MMProduct}),
	it follows that $q(X_0) \cap C_{F \times L}(tS)$ is dense in $C_{F \times L}(tS)$
	and also dense in $q(X_0)$. We get thus 
	\begin{eqnarray}
		\label{Eq.density_SUV}
		\overline{C_G^V\left(q(X_0)\right)} &\xlongequal{\textrm{Lem.~\ref{Lem.Density_and_conjugation_classes}}}&
		\overline{C_G^V\left(q(X_0) \cap C_{F \times L}(tS)\right)} \nonumber \\
		&\xlongequal{\textrm{Lem.~\ref{Lem.Density_and_conjugation_classes}}}&  \overline{C_G^V\left(C_{F \times L}(tS)\right)} = \overline{C_G(tS)} \, .
	\end{eqnarray}
	
	By Lemma~\ref{Lem.Stratification}\ref{Lem.Stratification6}, 
	$\pi|_{C_H^V(H_j \cap L)} \colon C_H^V(H_j \cap L) \to H_j \cap L$ is a geometric quotient
	for the $V$-conjugation. Thus, 
	\[
		q |_{F \times C_H^V(H_j \cap L)} \colon F \times C_H^V(H_j \cap L) \to F \times (H_j \cap L)
	\]
	is a geometric quotient for the $V$-conjugation. 
	Since $X$ is stable under $V$-conjuga\-tion,
	the same holds for the set $X_0$ in $F \times C_H^V(H_j \cap L)$. Therefore $X_0 = C_G^V(q(X_0))$.
	Hence,  $X = \overline{X_0} = \overline{C_G^V(q(X_0))} = \overline{C_G(tS)}$, where the last equality follows from~\eqref{Eq.density_SUV}. This finishes the proof.
\end{proof}

If $G$ is a connected algebraic group that satisfies the condition:
	\begin{equation}
	\label{Eq.Assumption_MM}
	\tag{$\ast$}
	\parbox{\dimexpr\linewidth-4em}{
		\strut
		there exists an isogeny $p \colon F \times H \to G$, where $F$ is connected and commutative,
		and $H$ is connected and affine,
		\strut
	}
\end{equation}
then it follows from Proposition~\ref{Prop.MMProduct_gen} together with Corollary~\ref{Cor.Geometric_quotient_density} and Corollary~\ref{Cor.Properties_of_SVV}\eqref{Cor.Properties_of_SVV_2} that the conclusion of Theorem~\ref{Thm.MMalg} holds.
However, the following example shows that not every connected algebraic group satisfies~\eqref{Eq.Assumption_MM}:

\begin{example}
	The following example is essentially due to Brion \cite[after Remarks~3.8]{Br2009Anti-affine-algebr}:
	
	Let $E$ be
	an indecomposable non-trivial extension of an elliptic curve $C$ 
	by $\GG_a$, i.e.~$E \not\simeq C \times \GG_a$,  see \cite[Proposition~11 and the Remark]{Ro1958Extensions-of-vect}  for the existence.
	Let $U$ be the group of upper triangular 
	unipotent $3 \times 3$-matrices and let $\GG_a \simeq Z \subseteq U$ be the center of $U$.
	Consider $G = E \times^{\GG_a} U$, where 
	$G$ denotes the quotient by the diagonal embedding $\GG_a \hookrightarrow  \GG_a \times Z$ into $E \times U$.
	We get then canonical natural embeddings $E \subseteq G$ and $U \subseteq G$ and in fact
	$E = G_{\ant}$, since $E$ is indecomposable.
	
	\begin{claim}
		The assumption~\eqref{Eq.Assumption_MM} is not satisfied for $G = E \times^{\GG_a} U$.
	\end{claim}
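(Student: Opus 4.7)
The plan is to pin down the structure of $G$ on both its anti-affine and affine sides, and then derive a contradiction from a dimension count. First, we claim $G_{\aff} = U$ and $Z(G) = E$. Since $E$ and $U$ commute in $G$ (inherited from commutativity of the factors in $E \times U$), the quotient $G/E = U/(E \cap U) = U/\GG_a$ is a quotient of the unipotent group $U$ by its center, hence affine, while $G/U = E/(E \cap U) = E/\GG_a = C$ is an elliptic curve; thus $G_{\aff} = U$. For the center, $E$ commutes with both $E$ (abelian) and $U$, so $E \subseteq Z(G)$; conversely, any $z = eu$ with $e \in E$, $u \in U$ that is central must satisfy $u \in Z(U) = \GG_a \subseteq E$, so $z \in E$.

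Now suppose for contradiction that $p\colon F \times H \to G$ is an isogeny with $F$ connected commutative and $H$ connected affine. Any homomorphism from the connected affine group $H$ to the abelian variety $G/U = C$ is trivial, so $p(H) \subseteq U$. The image $p(F)$ is commutative, commutes with $p(H)$ (since $F \times \{1\}$ and $\{1\} \times H$ commute in $F \times H$), and together they satisfy $p(F) \cdot p(H) = p(F \times H) = G$. Hence $p(F)$ centralizes $G$, i.e.\ $p(F) \subseteq Z(G) = E$. Since $p$ has finite kernel, $p|_F$ is an isogeny onto its image, so $\dim F = \dim p(F) \leq 2$. On the other hand, Lemma~\ref{Lem.Homomorphic_images} applied to the surjection $p$ yields $p(F_{\ant}) = G_{\ant} = E$, hence $\dim F \geq \dim F_{\ant} \geq \dim E = 2$. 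We conclude $\dim F = 2$, so $\dim H = \dim G - \dim F = 2$, and $p|_H$ is an isogeny onto a connected two-dimensional subgroup $p(H) \subseteq U$.

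The final step, and the main obstacle, is to show that every connected two-dimensional subgroup of $U$ contains the center $Z(U) = \GG_a$. The Lie algebra of $U$ is the Heisenberg algebra with basis $\{x, y, z\}$ and bracket $[x, y] = z$, all other brackets zero. A two-dimensional Lie subalgebra not containing $z$ would project isomorphically onto the abelian quotient by $\Span(z)$; but then any two lifts $u, v$ of a basis of the quotient would satisfy $[u, v] \in \kk^{\times} \cdot z$, forcing $z$ into the subalgebra---contradiction. By the exponential correspondence between connected subgroups and Lie subalgebras in a unipotent group, $p(H)$ must contain $Z(U) = \GG_a$, which coincides with $E \cap U$ inside $G$. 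Therefore $\dim(E \cap p(H)) \geq 1$ and
\[
\dim(E \cdot p(H)) = \dim E + \dim p(H) - \dim(E \cap p(H)) \leq 2 + 2 - 1 = 3,
\]
contradicting $E \cdot p(H) \supseteq p(F) \cdot p(H) = G$ of dimension $4$. Once the Heisenberg fact is in hand, the argument reduces entirely to the centrality of $E$ in $G$ and dimension counts controlled by the Rosenlicht decomposition.
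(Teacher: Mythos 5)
Your argument is correct and the overall strategy coincides with the paper's: pin the anti-affine/central part of the decomposition down to $E$, conclude the affine factor must land as a two-dimensional connected subgroup of the Heisenberg group $U$, and derive a contradiction from the fact that $U$ admits no such subgroup avoiding its center $Z(U)$. Where you diverge is in the two supporting steps. For the key obstruction, the paper parametrizes a hypothetical two-dimensional subgroup $U\cap H$ complementary to $Z$ explicitly as $\bigl\{\begin{psmallmatrix}1&a&s(a,c)\\0&1&c\\0&0&1\end{psmallmatrix}\bigr\}$ and shows directly that commutativity fails; you instead prove the clean Lie-algebra statement that the Heisenberg algebra has no two-dimensional subalgebra omitting $z$ (any complement to $\Span(z)$ generates $z$ via the bracket) and invoke the exponential correspondence. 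This is more conceptual and, importantly, does not secretly use commutativity of the subgroup, so it would generalize to higher Heisenberg groups without extra work. For fixing the dimensions, the paper passes through the homogeneous space $U/(U\cap H)\subseteq G/H\cong E/(E\cap H)$ and compares affine versus non-affine, whereas you use Lemma~\ref{Lem.Homomorphic_images} together with $p(F)\subseteq Z(G)=E$ to get $\dim F=2$ directly, and then close with a single dimension count $\dim\bigl(E\cdot p(H)\bigr)\le 3<4$. Both routes are valid; yours is somewhat shorter and avoids reasoning about non-group homogeneous spaces. One cosmetic remark: you write $p(F_{\ant})=G_{\ant}$, which uses the unstated (but easy) fact that $(F\times H)_{\ant}=F_{\ant}\times\{1\}$ because $H$ is affine — worth a clause.
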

	
	Indeed, assume~\eqref{Eq.Assumption_MM} holds. Then there
	exist a central connected subgroup $F \subseteq G$ and a connected affine group $H \subseteq G$
	such that $F H = G$ and $F \cap H$ is finite. As the quotient $G/F$ is isomorphic to a quotient of $H$,
	it is affine. Hence, by definition $F \supseteq G_{\ant} = E$. If $F \supsetneq G_{\ant}$,
	then there exists $e \in E$ and $u \in U \setminus Z$ such that $eu \in F$. Hence there exists $u' \in U$ with
	$uu' \neq u'u$ in $U \subseteq G$ and therefore $euu' \neq  eu'u = u'eu$ in $G$, where the last equation follows from the
	fact that $E$ is central in $G$. This contradicts the fact that $F$ is central in $G$ and hence we get
	\[
	F = E = G_{\ant} \, .
	\]
	Consider
	\begin{equation}
		\label{Eq.Example_Brion}
		U/(U \cap H) \subseteq G/H \xleftarrow{\simeq} E/(E \cap H) \, .
	\end{equation}
	If $H \supseteq U$, then $\dim G/H \leq 1$ and thus 
	$E \cap H$ would not be finite, contradiction. Hence, $\dim U/ (U \cap H) > 0$.
	Since $U/(U \cap H)$ is affine and $E / E \cap H$ is not affine (the latter follows from the fact that $E$ is not affine and $E \cap H$ is finite), 
	we get from~\eqref{Eq.Example_Brion} 
	that  $\dim U/(U \cap H) = 1$. Note that $Z$ is not contained in $U \cap H$, since otherwise
	$F \cap H$ would not be finite. In summary we get  $U = Z (U \cap H)$.
	Using the fact that
	\[
	Z = \Bigset{ \begin{psmallmatrix}
			1 & 0 & b \\
			0 &  1 & 0 \\
			0 & 0 & 1
	\end{psmallmatrix}}{b \in \kk}
	\]
	it follows that
	\[
	U \cap H = \Bigset{ 
		\begin{psmallmatrix}
			1 & a & s(a, c) \\
			0 &  1 & c \\
			0 & 0 & 1
	\end{psmallmatrix} }{a, c \in \kk}
	\]
	for some morphism $s \colon \GG_a^2 \to \AA^1$.
	Since $U \cap H$ is a commutative group (that is isomorphic to $\GG_a^2$), it follows that
	$s(a, c) + s(x, z) + az = s(x, z) + s(a, c) + xc$ for all $a, c, x, z \in \kk$. This is absurd and hence we get a contradiction.
	This finishes the proof of the claim.
\end{example}

\begin{proof}[Proof of Theorem~\ref{Thm.MMalg}]
	Using Corollary~\ref{Cor.reduction_to_conjugacy_invariant_subset}, 
	we may assume that $X$ is stable under $G$-conjugation and closed in $G$, i.e.~$X = \overline{C_G(X)}$.
	
	Let $S \coloneqq (G_{\ant} \cap G_{\aff})^\circ \subseteq G$ and let
	\[
		\pi \colon  G_{\ant} \times G_{\aff} \to G_{\ant} \times^S G_{\aff} \, ,
	\]
	be the quotient of $G_{\ant} \times G_{\aff}$ by the subgroup
	\[
		\tilde{S} \coloneqq \set{(s^{-1}, s) \in G_{\ant} \times G_{\aff}}{s \in S} \simeq S \, .
	\] 
	Then the multiplication homomorphism
	$G_{\ant} \times G_{\aff} \to G$ is the composition of $\pi$ followed by a an isogeny 
	$p \colon G_{\ant} \times^S G_{\aff} \to G$. Note that $p^{-1}(G_{\tor})$ is the set of all torsion points
	in $G_{\ant} \times^S G_{\aff}$, since
	$p$ is an isogeny. By applying Corollary~\ref{Cor.Geometric_quotient_density} to $p^{-1}(X) \to X$
	and the dense subset $X_{\tor}$ of $X$, 
	it follows that the torsion points $p^{-1}(X)_{\tor} = p^{-1}(X_{\tor})$ 
	are dense in $p^{-1}(X)$.
	Using Corollary~\ref{Cor.Properties_of_SVV}\eqref{Cor.Properties_of_SVV_2}, it is enough to show that
	$p^{-1}(X)$ is a finite union of special subvarieties of $G_{\ant} \times^S G_{\aff}$. Hence, we may replace $G_{\ant} \times^S G_{\aff}$
	by $G$, i.e.~$G_{\ant} \cap G_{\aff}$ is connected. Then
	\[
		\pi \colon G' \coloneqq G_{\ant} \times G_{\aff} \to G
	\]
	is  surjective and its kernel is the central connected subgroup $\tilde{S}$ of $G'$
	
	As $\pi^{-1}(X)$ is closed in $G'$ and stable under $G'$-conjugation,
	the same holds for 
	\[
		X' \coloneqq \overline{\pi^{-1}(X)_{\tor}} \, .
	\] 
	As $X'_{\tor}$ is dense in $X'$, by Proposition~\ref{Prop.MMProduct_gen}, $X'$
	is a finite union of special subvarieties of $G'$, 
	i.e.~there exist connected commutative subgroups $S_1', \ldots, S_m'$ of $G'$ such that for all 
	$1 \leq i \leq m$ we have that $(S_i')_{\tor}$ is dense in $S_i'$ and there exists $t_i' \in Z_{G'}(S_i')_{\tor}$
	with
	\begin{equation}
		\label{Eq.union_above}
		X' = \overline{C_{G'}(t'_1 S'_1)} \cup \ldots \cup \overline{C_{G'}(t'_m S'_m)} \, .
	\end{equation}
	
	Let 
	\[
		U' \coloneqq \bigcup_{i=1}^m C_{G'}(t_i' S_i') \subseteq X' \, .
	\]
	Since $\tilde{S}$ is a connected commutative algebraic group, it is divisible (see Lemma~\ref{Lem.Comm_conn_group_div}).
	Since $\tilde{S}$ is central in $G'$, we get thus $\pi(G'_{\tor}) = G_{\tor}$. For $1 \leq i \leq m$, let
	$S_i \coloneqq \pi(S'_i)$, $t_i \coloneqq \pi(t_i') \in Z_{G}(S_i)_{\tor}$. Then $(S_i)_{\tor}$ is dense in $S_i$ by
	Lemma~\ref{Lem.top}\eqref{Lem.top1}. Thus $\overline{C_G(t_i S_i)}$ is a special subvariety of $G$ for all $1 \leq i \leq m$.
	We get now
	\begin{equation}
		\label{Eq.pi(U')_in_X}
		\pi(U') = \bigcup_{i=1}^m C_G(t_i S_i) \subseteq  \pi(X') \subseteq X \, .
	\end{equation}
	Since $U'$ is dense in $X'$ (by~\eqref{Eq.union_above}), 
	it follows that $\pi(U')$ is dense in $\pi(X')$ (see Lemma~\ref{Lem.top}\eqref{Lem.top1}).
	Note that
	\[
		X_{\tor} = X \cap \pi(G'_{\tor}) = \pi(\pi^{-1}(X)_{\tor}) 
		\subseteq \pi(X') \subseteq X \, .
	\]
	As $X_{\tor}$ is dense in $X$, we get thus that
	$\pi(X')$ is dense in $X$. In summary, we get that $\pi(U')$ is dense in $X$
	and using~\eqref{Eq.pi(U')_in_X} it follows that $X$ is equal to the union 
	of the special subvarieties $\overline{C_G(t_1 S_1)}, \ldots, \overline{C_G(t_m S_m)}$ of $G$.
\end{proof}

\section{A variation of the Manin-Mumford theorem}

\begin{notation}
	\label{Notation.variation}
	Throughout this section $G$ denotes a connected algebraic group.
	Moreover, $R_u(G_{\aff}) \subseteq G$ denotes the unipotent radical of $G_{\aff}$,
	$\hat{G} \coloneqq  G/R_u(G_{\aff})$ denotes the quotient and $p \colon G \to \hat{G}$
	denotes the canonical projection. Further, $\pi_{\hat{G}} \colon \hat{G} \to \hat{G} \aquot \hat{G}$
	denotes the categorical quotient of $\hat{G}$ by conjugation (see Proposition~\ref{Prop.Alg_quotient_general})
	and we consider the composition: 
	\[
		\pi \colon G \xrightarrow{p} \hat{G} \xrightarrow{\pi_{\hat{G}}} \hat{G} \aquot \hat{G} \, .
	\]
	We fix a maximal torus $T \subseteq G_{\aff}$. Then 
	$\hat{T} \coloneqq p(T)$ is a maximal torus in $\hat{G}_{\aff}$ and
	\[
		\rho \coloneqq \pi_{\hat{G}} |_{\hat{G}_{\ant} \hat{T}} \colon 
		\hat{G}_{\ant} \hat{T}  \to \hat{G} \aquot \hat{G} \, .
	\]
	is the geometric quotient of $\hat{G}_{\ant} \hat{T}$ with regard to conjugation by the Weyl group 
	$W = N_{\hat{G}_{\aff}}(\hat{T})/\hat{T}$ of $\hat{G}_{\aff}$ with respect to $\hat{T}$,
	see Corollary~\ref{Cor.Restriction_of_alg_quotient_general}.
\end{notation}

Remember that in general the intersection of special subvarieties (cf. Definition~\ref{Def.Special_subvarieties})
is not the finite union of special subvarieties, see Example~\ref{Exa.Intersection_of_SUV is_not_SUV}.
In order to remedy this failure, we introduce the following variant of torsion (which is coarser than the classical notion of torsion):

\begin{definition}
	\label{Def.utorsion}
	An element $g \in G$ is called \emph{unipotent-torsion}
	if there exists $n > 0$ such that $g^n$ is a unipotent element in $G$. If 
	$X \subseteq G$ is any subset, we denote by $X_{\utor}$ the unipotent-torsion elements from
	$G$ that belong to $X$.
\end{definition} 

Thus an element $\hat{g} \in \hat{G}$ is unipotent-torsion if and only if its semisimple part $\hat{g}_s$ is torsion.
Clearly every torsion element is unipotent-torsion. 
Moreover, we have the following relation between torsion and unipotent-torsion:

\begin{lemma}
	\label{Lem.Properties.utorsion} $ $
	\begin{enumerate}[leftmargin=*]
		\item \label{Lem.Properties.utorsion0} We have $p^{-1}(\hat{G}_{\utor}) = G_{\utor}$.
		
		\item \label{Lem.Properties.utorsion1} We have $\pi^{-1}(\pi(G_{\utor})) = G_{\utor} $, 
		$\rho^{-1}(\rho((\hat{G}_{\ant}\hat{T})_{\tor})) = (\hat{G}_{\ant}\hat{T})_{\tor}$
		and $\pi(G_{\utor}) = \rho((\hat{G}_{\ant}\hat{T})_{\tor})$.
		\item \label{Lem.Properties.utorsion2} If $X \subseteq G$ is a subset, 
		then $\rho^{-1}(\pi(X_{\utor}))= \rho^{-1}(\pi(X))_{\tor}$.
		\item \label{Lem.Properties.utorsion3} If $Y \subseteq \hat{G}_{\ant}\hat{T}$ 
		is a subset, then  $\pi^{-1}(\rho(Y_{\tor})) = \pi^{-1}(\rho(Y))_{\utor}$.
		%
		%
	\end{enumerate}
\end{lemma}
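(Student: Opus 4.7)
The plan is to establish the four parts in order, with part~\ref{Lem.Properties.utorsion0} carrying the main content and the rest following by essentially formal manipulations.

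For part~\ref{Lem.Properties.utorsion0}, the decisive input is that $\hat{G}_{\aff}$ is reductive (Lemma~\ref{Lem.Aff_Ant_of_GmodR_uG}), so $R_u(\hat{G}_{\aff})$ is trivial. The inclusion $G_{\utor}\subseteq p^{-1}(\hat{G}_{\utor})$ is immediate, since unipotent elements lie in $G_{\aff}$ and $p(G_{\aff})=\hat{G}_{\aff}$ (Lemma~\ref{Lem.Homomorphic_images}), so $p$ carries unipotents to unipotents. For the converse, I would first observe that $p^{-1}(\hat{G}_{\aff})=G_{\aff}$: since $\ker p=R_u(G_{\aff})\subseteq G_{\aff}$, the induced surjection of abelian varieties $G/G_{\aff}\twoheadrightarrow \hat{G}/\hat{G}_{\aff}$ has trivial kernel and is therefore an isomorphism. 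Hence if $p(g)^n$ is unipotent, then $g^n\in G_{\aff}$, and the classical Jordan decomposition inside $G_{\aff}$ yields $g^n=us$ with $u,s\in G_{\aff}$ commuting, $u$ unipotent, $s$ semisimple. The pair $(p(u),p(s))$ is then the Jordan decomposition of $p(g)^n$ in the reductive group $\hat{G}_{\aff}$, so unipotency of $p(g)^n$ forces $p(s)=e$, i.e.~$s\in R_u(G_{\aff})$; but $s$ is simultaneously semisimple, so $s=e$ and $g^n=u$ is unipotent.

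For part~\ref{Lem.Properties.utorsion1} I would handle the three equalities in turn. For $\pi^{-1}(\pi(G_{\utor}))=G_{\utor}$, factoring $\pi=\pi_{\hat{G}}\circ p$ and using~\ref{Lem.Properties.utorsion0} reduces the claim to $\pi_{\hat{G}}^{-1}(\pi_{\hat{G}}(\hat{G}_{\utor}))=\hat{G}_{\utor}$, which follows from the Jordan decomposition in $\hat{G}$ (available by Lemma~\ref{Lem.Jordan} since $\hat{G}_{\ant}$ is semi-abelian) together with Corollary~\ref{Cor.Conjugacy_classes_general}\eqref{Cor.Conjugacy_classes_general3}: the latter asserts $\pi_{\hat{G}}(\hat{g})=\pi_{\hat{G}}(\hat{g}_s)$ and that the semisimple parts of two elements with the same $\pi_{\hat{G}}$-image are conjugate, and conjugation preserves torsion. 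For $\rho^{-1}(\rho((\hat{G}_{\ant}\hat{T})_{\tor}))=(\hat{G}_{\ant}\hat{T})_{\tor}$, I would use that $\rho$ is the geometric quotient for $W$-conjugation (Corollary~\ref{Cor.Restriction_of_alg_quotient_general}) and that $(\hat{G}_{\ant}\hat{T})_{\tor}$ is $W$-stable since conjugation preserves torsion, hence $W$-saturated. For $\pi(G_{\utor})=\rho((\hat{G}_{\ant}\hat{T})_{\tor})$, the inclusion $\subseteq$ uses Corollary~\ref{Cor.Conjugacy_classes_general}\eqref{Cor.Conjugacy_classes_general3} to replace $g\in G_{\utor}$ by the semisimple torsion part of $p(g)$, then Lemma~\ref{Lem.torsion_points}\ref{Lem.torsion_points2} to conjugate this part into $\hat{G}_{\ant}\hat{T}$; the reverse inclusion is immediate from surjectivity of $p$ together with~\ref{Lem.Properties.utorsion0}, since every torsion element is unipotent-torsion.

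Parts~\ref{Lem.Properties.utorsion2} and~\ref{Lem.Properties.utorsion3} should then be formal consequences of~\ref{Lem.Properties.utorsion0} and~\ref{Lem.Properties.utorsion1}. For~\ref{Lem.Properties.utorsion2}: given $y\in\rho^{-1}(\pi(X_{\utor}))$ with $\rho(y)=\pi(x)$ and $x\in X_{\utor}$, the third equality of~\ref{Lem.Properties.utorsion1} places $\rho(y)$ in $\rho((\hat{G}_{\ant}\hat{T})_{\tor})$ and the second equality then forces $y$ to be torsion; conversely, if $y$ is torsion with $\rho(y)=\pi(x)$ for some $x\in X$, the third equality gives $\pi(x)\in\pi(G_{\utor})$ and the first equality forces $x\in G_{\utor}$, so $x\in X_{\utor}$. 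Part~\ref{Lem.Properties.utorsion3} is dually analogous: for unipotent-torsion $g\in\pi^{-1}(\rho(Y))$, any $y\in Y$ with $\rho(y)=\pi(g)$ satisfies $\rho(y)\in\rho((\hat{G}_{\ant}\hat{T})_{\tor})$ by the third equality, so $y$ is torsion by the second equality. The sole nontrivial step is thus~\ref{Lem.Properties.utorsion0}, where one must carefully mediate between the Jordan decomposition---available in $G_{\aff}$ and in $\hat{G}$ but not in $G$ itself---and the projection $p$; once that is in place, everything else reduces to bookkeeping with the structural results of Section~\ref{Subsect.Conjugation}.
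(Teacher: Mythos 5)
Your proposal is correct and its overall structure — reduce everything to the reductive quotient $\hat{G}$, exploit the Jordan decomposition there together with Corollary~\ref{Cor.Conjugacy_classes_general} and the $W$-quotient description of $\rho$, and then derive (3) and (4) formally from (1)–(2) — matches the paper. The one place you take a genuinely different route is the nontrivial inclusion $p^{-1}(\hat{G}_{\utor}) \subseteq G_{\utor}$ in part (1). The paper lets $U$ be the closure of the cyclic subgroup generated by the unipotent element $p(g)^n$ and observes that $p^{-1}(U)$, being an extension of the unipotent group $U$ by the unipotent kernel $R_u(G_{\aff})$, is itself unipotent, so $g^n \in p^{-1}(U)$ is unipotent. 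You instead first prove $p^{-1}(\hat{G}_{\aff}) = G_{\aff}$ (correctly: $\ker p \subseteq G_{\aff}$ forces $p^{-1}(p(G_{\aff})) = G_{\aff}\cdot\ker p = G_{\aff}$, and $p(G_{\aff})=\hat{G}_{\aff}$ by Lemma~\ref{Lem.Homomorphic_images}), place $g^n$ in $G_{\aff}$, Jordan-decompose $g^n = us$ there, compare with the Jordan decomposition of $p(g)^n$ in $\hat{G}_{\aff}$, and conclude $s \in \ker p$ is both semisimple and unipotent, hence trivial. Both arguments ultimately rest on the same structural fact — the kernel of $p$ is unipotent — but the paper's version is a bit shorter since it avoids Jordan decomposition entirely, while yours has the advantage of staying in the Jordan-theoretic idiom you then use in part (2). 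For part (2) you cite Corollary~\ref{Cor.Conjugacy_classes_general}\eqref{Cor.Conjugacy_classes_general3} whereas the paper cites~\eqref{Cor.Conjugacy_classes_general4}; either suffices. Parts (3) and (4), which the paper brushes off as direct consequences of (2), you unpack explicitly and correctly.
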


\begin{proof}
	\eqref{Lem.Properties.utorsion0}: Clearly $p(G_{\utor}) \subseteq \hat{G}_{\utor}$.
	Now, let $g \in G$ with $p(g) \in \hat{G}_{\utor}$. Then
	there exists $n > 0$ such that $p(g)^n$ is unipotent. Let $U \subseteq \hat{G}$ be the closure of the subgroup
	generated by $p(g)^n$. Hence, $p^{-1}(U)$ is unipotent as it is an extension of 
	$R_u(G_{\aff})$ by the unipotent group $U$. 
	In particular $g^n \in p^{-1}(U)$ is unipotent, i.e.~$g \in G_{\utor}$
	
	\eqref{Lem.Properties.utorsion1}: 
	Let $g \in G$ such that $\pi(g) = \pi(h)$, where $h \in G_{\utor}$. 
	By Corollary~\ref{Cor.Conjugacy_classes_general}\eqref{Cor.Conjugacy_classes_general4},
	there exists a unipotent $u \in \hat{G}$ and $a \in \hat{G}$ such that $p(g) = a u p(h)_s a^{-1}$ 
	and $u, p(h)_s$ commute.
	Since $h$ is unipotent-torsion, it follows that $p(h)_s$ is torsion and hence $p(g) \in \hat{G}_{\utor}$.
	By~\eqref{Lem.Properties.utorsion0} it follows that $g \in G_{\utor}$. This shows that
	$\pi^{-1}(\pi(G_{\utor})) = G_{\utor}$.
	
	The equality $\rho^{-1}(\rho((\hat{G}_{\ant}\hat{T})_{\tor})) = (\hat{G}_{\ant}\hat{T})_{\tor}$ follows from the fact, that $\rho$ is the geometric quotient
	of $\hat{G}_{\ant}\hat{T}$ by conjugation with the Weyl group, see 
	Corollary~\ref{Cor.Restriction_of_alg_quotient_general}.
	
	By~\eqref{Lem.Properties.utorsion0}, $p(G_{\utor}) = \hat{G}_{\utor}$.
	We get thus the last assertion from
	\[
	\pi(G_{\utor}) = \pi_{\hat{G}}(p(G_{\utor})) = \pi_{\hat{G}}(\hat{G}_{\utor}) = \pi_{\hat{G}}(\hat{G}_{\tor}) = \rho((\hat{G}_{\ant}\hat{T})_{\tor})
	\, ,
	\]
	where the third equality follows from the fact that $\pi_{\hat{G}}(a) = \pi_{\hat{G}}(a_s)$
	for all $a \in \hat{G}$  (see Corollary~\ref{Cor.Conjugacy_classes_general}\eqref{Cor.Conjugacy_classes_general3})
	and the last equality follows from Lemma~\ref{Lem.torsion_points}.
	
	\eqref{Lem.Properties.utorsion2} and \eqref{Lem.Properties.utorsion3}: These are direct consequences 
	of~\eqref{Lem.Properties.utorsion1}.
\end{proof}

The following consequence of Lemma~\ref{Lem.Properties.utorsion} 
shows that there is no hope for a simple description of all 
closed (irreducible) subsets of $G$ with dense unipotent-torsion points.

\begin{corollary}
	\label{Cor.X_utor_dense}
	Let $X \subseteq G$ be a closed irreducible subset such that 
	\begin{enumerate}[leftmargin=*, label=\alph*)]
		\item \label{Cor.X_utor_dense1} $\pi(X_{\utor})$ is dense in $\pi(X)$ or
		\item \label{Cor.X_utor_dense2} $\pi(X)$ is dense in $\hat{G} \aquot \hat{G}$.
	\end{enumerate}
	Then $X_{\utor}$ is dense in $X$.  In particular, $G_{\utor}$ is dense in $G$.
\end{corollary}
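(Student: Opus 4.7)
The plan is to observe that the hypotheses \emph{(a)} and \emph{(b)} both produce a dense subset of $Y \coloneqq \overline{\pi(X)}$ that is contained in $\pi(G_{\utor})$, and then to transfer density back to $X$ by exploiting generic flatness of $\pi|_X \colon X \to Y$. As a preliminary, I would first show that $\pi(G_{\utor})$ is dense in all of $\hat{G} \aquot \hat{G}$. By Lemma~\ref{Lem.Properties.utorsion}\eqref{Lem.Properties.utorsion1}, $\pi(G_{\utor}) = \rho((\hat{G}_{\ant}\hat{T})_{\tor})$. Since $\hat{G}_{\ant}$ is semi-abelian (Lemma~\ref{Lem.Aff_Ant_of_GmodR_uG}) and central in $\hat{G}$, and since $\hat{T}$ is a torus, the product $\hat{G}_{\ant}\hat{T}$ is the homomorphic image of the semi-abelian variety $\hat{G}_{\ant} \times \hat{T}$ under multiplication and hence is itself semi-abelian. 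Proposition~\ref{Prop.dense} then gives density of $(\hat{G}_{\ant}\hat{T})_{\tor}$ in $\hat{G}_{\ant}\hat{T}$, and applying the surjection $\rho$ delivers $\pi(G_{\utor})$ dense in $\hat{G} \aquot \hat{G}$.

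Next, with $Y = \overline{\pi(X)}$ irreducible, I would extract a subset $E \subseteq \pi(G_{\utor})$ that is dense in $Y$, treating the two hypotheses uniformly. In case~(a) take $E = \pi(X_{\utor})$: using $\pi^{-1}(\pi(G_{\utor})) = G_{\utor}$, a quick check gives $\pi(X_{\utor}) = \pi(X) \cap \pi(G_{\utor})$, and this is dense in $\pi(X)$ by assumption, hence in $Y$. In case~(b), $Y = \hat{G} \aquot \hat{G}$ and the preliminary paragraph supplies $E = \pi(G_{\utor})$.

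To conclude, I would apply generic flatness to the dominant morphism $\pi|_X \colon X \to Y$, producing a dense open $V \subseteq Y$ over which $\pi|_X$ is flat and hence open. By dominance and openness the image of $(\pi|_X)^{-1}(V) \to V$ is dense open in $V$; shrinking $V$ I may assume $\pi|_X \colon (\pi|_X)^{-1}(V) \to V$ is open and surjective. Openness then propagates density from $E \cap V \subseteq V$ to $(\pi|_X)^{-1}(E \cap V) \subseteq (\pi|_X)^{-1}(V)$: for any non-empty open $U$ of the latter, $\pi|_X(U)$ is a non-empty open of $V$, hence meets $E \cap V$. Since $E \subseteq \pi(G_{\utor})$, we have $(\pi|_X)^{-1}(E \cap V) \subseteq \pi^{-1}(\pi(G_{\utor})) \cap X = X_{\utor}$, and $(\pi|_X)^{-1}(V)$ is open dense in the irreducible variety $X$. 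Therefore $X_{\utor}$ is dense in $X$, and the ``in particular'' clause is the specialization $X = G$. The main obstacle is establishing the openness used to transfer density; this rests on the standard fact that flat finite-type morphisms between Noetherian schemes are open, together with generic flatness of $\pi|_X$.
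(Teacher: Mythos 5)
Your proposal is correct, and the density-reduction at the start (establishing $\pi(G_{\utor})$ is dense via $\rho$ applied to the torsion of the semi-abelian variety $\hat{G}_{\ant}\hat{T}$, and then massaging case~(b) into case~(a)) matches the paper's reduction in spirit. Where you diverge is in the mechanism for transporting density from $Y = \overline{\pi(X)}$ back to $X$. The paper introduces $X' \coloneqq \overline{X_{\utor}}$, observes that for any $a \in \pi(X_{\utor})$ the entire fibre $\pi^{-1}(a) \cap X$ already sits inside $X_{\utor}$ (since $\pi^{-1}(\pi(G_{\utor})) = G_{\utor}$), and then invokes upper semicontinuity of fibre dimension to force the generic fibre dimensions of $\pi|_X$ and $\pi|_{X'}$ to coincide, yielding $\dim X' = \dim X$ and hence $X' = X$ by irreducibility. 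You instead invoke generic flatness of $\pi|_X \colon X \to Y$ and the openness of flat finite-type morphisms to make $\pi|_X$ open over a dense open $V \subseteq Y$, and then pull density of $E \cap V$ back directly using that open maps reflect density. Both routes are valid and rest on standard results; the paper's is closer to a pure dimension count and needs only Chevalley-style fibre dimension theory, while yours is a touch slicker (no auxiliary $X'$, and the density statement is transferred in one step) at the cost of appealing to the heavier machinery of flatness. The observation that $(\pi|_X)^{-1}(E \cap V) \subseteq \pi^{-1}(\pi(G_{\utor})) \cap X = X_{\utor}$ is the same structural fact that drives the paper's argument; you simply exploit it via openness rather than via equality of fibres.
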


\begin{proof}
	Since $G_{\utor} = \pi^{-1}(\pi(G_{\utor}))$ (see Lem\-ma~\ref{Lem.Properties.utorsion}\eqref{Lem.Properties.utorsion1}), 
	we get
	$\pi(X_{\utor}) = \pi(G_{\utor}) \cap \pi(X)$.
	Let $X' \coloneqq \overline{X_{\utor}} \subseteq X$.
	
	Assume first that~\ref{Cor.X_utor_dense2} holds.
	Since $\pi(X)$ is constructible and dense in $\hat{G} \aquot \hat{G}$ 
	and since $\pi(G_{\utor}) = \rho((\hat{G}_{\ant}\hat{T})_{\tor})$ is dense in $\hat{G} \aquot \hat{G}$
	(see Lemma~\ref{Lem.Properties.utorsion}\eqref{Lem.Properties.utorsion1}), it follows that $\pi(X_{\utor})$ 
	is dense in $\hat{G} \aquot \hat{G}$. 
	Hence $\pi(X_{\utor})$ is dense in $\pi(X)$, i.e.~we may and will assume that \ref{Cor.X_utor_dense1} holds.
	As $X$ is non-empty ($X$ is irreducible), it follows that $X_{\utor}$ is non-empty as well.
	
	There exist an open dense subset 
	$U \subseteq \overline{\pi(X)} = \overline{\pi(X')}$ and  $d, d' \geq 0$ such that 
	the fibre of $\pi |_X \colon X \to \overline{\pi(X)}$ and $\pi |_{X'} \colon X' \to \overline{\pi(X')}$ 
	over each point of $U$ has dimension $d$ and $d'$, respectively.
	However, for all $a \in \pi(X_{\utor})$ we have that 
	\[
	(\pi |_X)^{-1}(a) = \pi^{-1}(a) \cap X =  \pi^{-1}(a) \cap X_{\utor} = (\pi |_{X'})^{-1}(a)
	\]
	and thus we conclude $d = d'$ (here we use that $X_{\utor}$ is non-empty). 
	This implies that $\dim X' = \dim X$. Since $X$
	is irreducible, we get  that $X_{\utor}$ is dense in $X$.
\end{proof}

In order to adjust the notion of special subvarieties for unipotent-torsion, 
let us reconsider Example~\ref{Exa.Intersection_of_SUV is_not_SUV}

\begin{example}
	Let $G \coloneqq \GL_2(\kk)$ and as in Example~\ref{Exa.Intersection_of_SUV is_not_SUV}, consider
	\[
	S_k \coloneqq \Bigset{
		A_{k, \lambda}}{\lambda \in \kk^\ast} \subseteq 
	T \coloneqq 
	\Bigset{
		\begin{pmatrix}
			\lambda_1 & 0 \\
			0 & \lambda_2
		\end{pmatrix}
	}{\lambda_1, \lambda_2 \in \kk^\ast}
	\subseteq \GL_2(\kk) \, ,
	\]
	where
	\[
	A_{k, \lambda} = \begin{pmatrix}
		\lambda^k & 0 \\
		0 & \lambda
	\end{pmatrix} \in \GL_2(\kk) \, .
	\]
	Then we have (by setting $k_1 = 3$, $k_2 = 5$ in Example~\ref{Exa.Intersection_of_SUV is_not_SUV}):
	\[
		\overline{C_{\GL_2(\kk)}(S_{3})} \cap \overline{C_{\GL_2(\kk)}(S_{5})}
		= \coprod_{\lambda^{14} = 1} \pi^{-1}(\rho(A_{3, \lambda})) \, .
	\]
	Now, we make a case-by-case analysis (by using Corollary~\ref{Cor.Conjugacy_classes_general}\eqref{Cor.Conjugacy_classes_general4}):
	\begin{enumerate}[leftmargin=*, label=\alph*)]
		\item \label{Exa.Case_Id} If $\lambda = 1$, then $\pi^{-1}(\rho(A_{3, \lambda})) = 
		\{ \begin{psmallmatrix}
			1 & 0 \\ 0 & 1
		\end{psmallmatrix} \} \cup
		C_G(\begin{psmallmatrix}
			1 & 1 \\ 0 & 1
		\end{psmallmatrix})$.
		\item \label{Exa.Case_-Id} If $\lambda = -1$, then $\pi^{-1}(\rho(A_{3, \lambda})) = 
		\{ \begin{psmallmatrix}
			-1 & 0 \\ 0 & -1
		\end{psmallmatrix} \} \cup 
		C_G(\begin{psmallmatrix}
			-1 & 1 \\ 0 & -1
		\end{psmallmatrix})$.
		\item \label{Exa.Case_Gen} If $\lambda^2 \neq 1$, then $\pi^{-1}(\rho(A_{3, \lambda})) = 
		C_G(\begin{psmallmatrix}
			\lambda^3 & 0 \\ 0 & \lambda
		\end{psmallmatrix})$.
	\end{enumerate}
	If $\lambda$ is a torsion point , then in~\ref{Exa.Case_Gen}, we get only torsion points,
	whereas in~\ref{Exa.Case_Id},~\ref{Exa.Case_-Id} we get only one torsion point. 
	However, all elements in the cases~\ref{Exa.Case_Id}, \ref{Exa.Case_-Id}, \ref{Exa.Case_Gen} 
	are unipotent-torsion if $\lambda$ is a torsion point. Hence, the unipotent-torsion points ly
	dense in the intersection $\overline{C_{\GL_2(\kk)}(S_{3})} \cap \overline{C_{\GL_2(\kk)}(S_{5})}$.
\end{example}

\begin{definition}
	\label{Def.uSUV}
	For any commutative connected subgroup $S \subseteq G$ and $h \in G_{\utor}$ 
	that commutes with $S$ we call 
	\[
		\pi^{-1}(\pi(hS))
	\]
	a \emph{unipotent-special subvariety} of $G$.
\end{definition}

We give a characterization of unipotent-special subvarieties which turn out to be very useful 
in the application.

\begin{lemma}
	\label{Lem.uSUV}
	The unipotent-special subvarieties of $G$ are the subvarities
	\[
		\pi^{-1}(\rho(h' S')) \, ,
	\]
	where $h' \in (\hat{G}_{\ant}\hat{T})_{\tor}$ and $S' \subseteq \hat{G}_{\ant}\hat{T}$ 
	is a closed connected subgroup. In particular, $G$ is a unipotent-special subvariety.
\end{lemma}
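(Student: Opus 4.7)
The plan is to establish both inclusions of the claimed characterization and then recover the ``in particular'' statement as the degenerate case of the reverse inclusion.

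For the forward inclusion, let $X = \pi^{-1}(\pi(hS))$ be unipotent-special with $h \in G_{\utor}$ and $S \subseteq G$ a commutative connected subgroup commuting with $h$. Set $\bar{h} \coloneqq p(h) \in \hat{G}_{\utor}$ and $A \coloneqq p(S)$, and work inside $\hat{G}$. Since $\bar{h}$ commutes with each element of $A$, Lemma~\ref{Lem.Jordan} gives $(\bar{h}\bar{s})_s = \bar{h}_s\,\bar{s}_s$ for all $\bar{s} \in A$, and Corollary~\ref{Cor.Conjugacy_classes_general}\eqref{Cor.Conjugacy_classes_general3} gives $\pi_{\hat{G}}(\bar{h}\bar{s}) = \pi_{\hat{G}}((\bar{h}\bar{s})_s)$, so
\[
\pi(hS) \;=\; \pi_{\hat{G}}(\bar{h}A) \;=\; \pi_{\hat{G}}(\bar{h}_s \cdot A_s),
\]
where $A_s \coloneqq \{\bar{s}_s : \bar{s} \in A\}$. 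Because $\hat{G}_{\ant}$ is semi-abelian, the Rosenlicht decomposition $A = A_{\ant}\cdot A_{\aff}$ combined with $A_{\aff} \cong T_0 \times U_0$ (a torus times a unipotent group) shows that $A_s = A_{\ant} \cdot T_0$ is a closed connected commutative subgroup of $\hat{G}$ all of whose elements are semisimple.

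Next I would simultaneously conjugate $\bar{h}_s$ and $A_s$ into $\hat{G}_{\ant}\hat{T}$. By Lemma~\ref{Lem.torsion_points}\eqref{Lem.torsion_points1}, a first conjugation puts $\bar{h}_s \in \hat{G}_{\ant}\hat{T}$; write $\bar{h}_s = z\,t_\ast$ with $z \in \hat{G}_{\ant}$ (central) and $t_\ast \in \hat{T}$. Since $A_s$ commutes with $\bar{h}_s$, in particular $T_0$ lies in the connected reductive group $Z_{\hat{G}_{\aff}}(t_\ast)^\circ$, which contains $\hat{T}$ as a maximal torus. Hence some $g \in Z_{\hat{G}_{\aff}}(t_\ast)^\circ$ conjugates $T_0$ into $\hat{T}$; this $g$ fixes $\bar{h}_s$ (it commutes with $t_\ast$ and with the central $z$) and leaves $A_{\ant}$ invariant. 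Setting $h' \coloneqq \bar{h}_s \in (\hat{G}_{\ant}\hat{T})_{\tor}$ (torsion because $\bar{h}$ is unipotent-torsion) and $S' \coloneqq A_{\ant}\cdot gT_0g^{-1} \subseteq \hat{G}_{\ant}\hat{T}$, the conjugation-invariance of $\pi_{\hat{G}}$ yields $\pi(hS) = \rho(h'S')$, hence $X = \pi^{-1}(\rho(h'S'))$.

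For the reverse inclusion, I would lift a given pair $(h',S')$, with $h' \in (\hat{G}_{\ant}\hat{T})_{\tor}$ and $S' \subseteq \hat{G}_{\ant}\hat{T}$ closed connected, to the commutative subgroup $G_{\ant}T \subseteq G$ (commutative because $G_{\ant}$ is central and $T$ is a torus). Since $p(G_{\ant}) = \hat{G}_{\ant}$ and $p(T) = \hat{T}$ by Lemma~\ref{Lem.Homomorphic_images}, the restriction $p|_{G_{\ant}T}\colon G_{\ant}T \to \hat{G}_{\ant}\hat{T}$ is surjective. Pick $h \in G_{\ant}T$ with $p(h) = h'$; then $h \in p^{-1}(\hat{G}_{\utor}) = G_{\utor}$ by Lemma~\ref{Lem.Properties.utorsion}\eqref{Lem.Properties.utorsion0}. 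Set $S \coloneqq (p^{-1}(S') \cap G_{\ant}T)^\circ$, a commutative connected algebraic subgroup of $G$ commuting with $h$. The restricted map $p^{-1}(S') \cap G_{\ant}T \to S'$ is surjective, so $p(S)$ is a closed connected subgroup of finite index in the connected $S'$, forcing $p(S) = S'$. Therefore $\pi(hS) = \pi_{\hat{G}}(h'S') = \rho(h'S')$, and $\pi^{-1}(\rho(h'S'))$ is unipotent-special. Taking $h' = e$ and $S' = \hat{G}_{\ant}\hat{T}$ (closed, connected, containing $e$) together with the surjectivity of $\rho$ (Corollary~\ref{Cor.Restriction_of_alg_quotient_general}) yields $\pi^{-1}(\rho(\hat{G}_{\ant}\hat{T})) = \pi^{-1}(\hat{G}\aquot\hat{G}) = G$, establishing the ``in particular'' claim. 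The hard part will be the two-step conjugation in the forward direction: it relies on the decomposition $A = A_s \cdot A_u$ for commutative connected subgroups of $\hat{G}$, on the reductive structure of $Z_{\hat{G}_{\aff}}(t_\ast)^\circ$ with $\hat{T}$ as a maximal torus, and on the centrality of $\hat{G}_{\ant}$, which together provide exactly enough room to move $T_0$ into $\hat{T}$ without disturbing $\bar{h}_s$.
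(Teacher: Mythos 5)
Your proposal is correct and follows essentially the same route as the paper: reduce to semisimple parts via the Jordan decomposition in $\hat{G}$ (Lemma~\ref{Lem.Jordan} and Corollary~\ref{Cor.Conjugacy_classes_general}\eqref{Cor.Conjugacy_classes_general3}), simultaneously conjugate the semisimple element and the torus part of the group into $\hat{G}_{\ant}\hat{T}$, and for the reverse direction lift along the restriction of $p$ to $G_{\ant}T$. The paper packages the forward direction by forming $S_0 = S_{\ant}(S_{\aff})_s$ inside $G$ and projecting, whereas you work directly with $A = p(S)$ in $\hat{G}$; the paper also observes that $q^{-1}(S')$ is already connected (its kernel being a unipotent hence connected subgroup), which replaces your finite-index argument, but both give the same $S$. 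Your unpacking of the two-step conjugation — first sending $\bar h_s$ to $\hat G_{\ant}\hat T$, then moving the torus inside $Z_{\hat G_{\aff}}(t_\ast)^\circ$ while fixing $\bar h_s$ — makes explicit a step the paper leaves as ``we may assume after conjugation''.
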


\begin{proof}
	Assume first that $X$ is a unipotent-special subvariety of of $G$. Then there exists a
	 commutative connected subgroup $S \subseteq G$ and $h \in G_{\utor}$ that commutes with $S$
	 such that $X = \pi^{-1}(\pi(hS))$.
	 Let $S_0 \coloneqq S_{\ant} (S_{\aff})_s$, where $(S_{\aff})_s$ denotes the closed subgroup
	 of semisimple elements in $S_{\aff}$ (note that $S_{\aff}$ is connected and commutative).
	 Then $S_0$ is a closed connected subgroup of $S$. By Lemma~\ref{Lem.Homomorphic_images} and
	 Remark~\ref{Rem.Homomorphic_images} we get  $p(S_{\ant}) \subseteq p(G_{\ant}) = \hat{G}_{\ant}$ and thus
	 \[
	 	S' \coloneqq p(S_0) = p(S_{\ant}) p((S_{\aff})_s) \subseteq \hat{G}_{\ant}p((S_{\aff})_s) \, . 
	 \]
	 As $h$ is unipotent-torsion, it follows that $h' \coloneqq p(h)_s$ is torsion. 
	 Since $h'$, $S'$ commute (by Lemma~\ref{Lem.Jordan}) and since 
	 $p((S_{\aff})_s)$ is a torus in $p(G_{\aff}) = \hat{G}_{\aff}$ we may assume after conjugation that
	 $h'$ and $S'$ both lie in  $\subseteq \hat{G}_{\ant} \hat{T}$. 
	 
	 Let $g \in S$. Then the elements $p(g)_u$, $p(g)_s$, $p(h)_u$, $h'$ commute pairwise by Lemma~\ref{Lem.Jordan}.
	 Since $p(g)_u$, $p(h)_u$ are unipotent and
	 since $p(g)_s$, $h'$ 
	 are semi-simple, it follows that $p(h)_u p(g)_u$ is unipotent and  $h' p(g)_s$ is semisimple. Hence we get
	 \begin{equation}
	 	\label{Eq.pi(hg)_decomp}
	 	\pi(hg) = \pi_{\hat{G}}( p(h)_u p(g)_u h' p(g)_s ) = \pi_{\hat{G}}( h' p(g)_s)
	 \end{equation}
	 by Corollary~\ref{Cor.Conjugacy_classes_general}\eqref{Cor.Conjugacy_classes_general4}. Since $S$
	 is commutative, $R_u(S_{\aff})$ consists of all unipotent elements of $S$
	 (see Lemma~\ref{Lem.Unipot_Radical}).
	 As $S_{\aff} = R_u(S_{\aff}) (S_{\aff})_s$, we get thus
	 \[
	 	p(S) = p(R_u(S_{\aff}))S'\quad \textrm{and} \quad p(R_u(S_{\aff})) \cap S' = 1   \, .
	 \]
	 Hence, there exist $a \in  p(R_u(S_{\aff}))$, $b \in S'$ such that $p(g) = ab$,
	 $a$ is unipotent, $b$ is semisimple and $a, b$ commute (since $S$ is commutative). By the uniqueness
	 of the Jordan decomposition (Lemma~\ref{Lem.Jordan}), it follows that $a = p(g)_u$ and $b = p(g)_s$.
	 In particular, $p(g)_s \in S'$. 
	 Now,~\eqref{Eq.pi(hg)_decomp} shows that $\pi(hS) = \rho(h' S')$ and hence, $X = \pi^{-1}(\rho(h' S'))$.
	 
	 \medskip
	 
	 Now, let $h' \in (\hat{G}_{\ant}\hat{T})_{\tor}$ and let $S' \subseteq \hat{G}_{\ant}\hat{T}$ 
	 be a closed connected subgroup. 
	 Consider the surjection 
	 \[
	 	q \coloneqq p |_{G_{\ant}T} \colon G_{\ant}T \to \hat{G}_{\ant} \hat{T} \, .
	 \]
	 Let $h \in G_{\ant} T$ with $q(h) = h'$ and let $S \coloneqq q^{-1}(S')$.
	 Then $h$ is unipotent-torsion (by Lemma~\ref{Lem.Properties.utorsion}\eqref{Lem.Properties.utorsion0}) 
	 and $S$ is a closed connected commutative subgroup of $G$
	 (since the kernel of $q$ is the unipotent subgroup $G_{\ant}T \cap R_u(G_{\aff})$)
	  such that $h$ and $S$ commute. 
	 Then we have $p(hS) = h' S'$ and thus
	 $\pi^{-1}(\pi(hS)) = \pi^{-1}(\rho(h' S'))$ is a unipotent-special subvariety of $G$.
\end{proof}

\begin{remark}
	\label{Rem.uSUV}
	Lemma~\ref{Lem.uSUV} implies that $\pi(X)$ 
	is closed in $\hat{G} \aquot \hat{G}$ and $X$ is closed in $G$ for every unipotent-special subvariety $X$ of $G$.
\end{remark}

%

In the following proposition we list several properties of unipotent-special subvarieties. In particular,
it follows from~\eqref{Prop.uSUV3} below that the set of finite unions of unipotent-special subvarieties form 
a bounded distributive lattice under union and intersection.

\begin{prop} $ $ 
	\label{Prop.uSUV}
	\begin{enumerate}[leftmargin=*]
		\item \label{Prop.uSUV1} Every unipotent-special subvariety of $G$ is irreducible and closed in $G$.
		\item \label{Prop.uSUV2} If $X$ is a unipotent-special subvariety of $G$, then $X_{\utor}$ is dense in $X$.
		\item \label{Prop.uSUV3} If $X_1$, $X_2$ are unipotent-special subvarieties of $G$, then
		$X_1 \cap X_2$ is a finite union of unipotent-special subvarieties.
		\item \label{Prop.uSUV4} 
		If $X$ is a special subvariety of $G$, then $\pi^{-1}(\pi(X))$ is a unipotent-special subvariety of $G$.
	\end{enumerate}
\end{prop}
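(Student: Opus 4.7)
The common thread for all four parts is Lemma~\ref{Lem.uSUV}, which presents every unipotent-special subvariety as $\pi^{-1}(\rho(h'S'))$ with $h' \in (\hat{G}_{\ant}\hat{T})_{\tor}$ and $S' \subseteq \hat{G}_{\ant}\hat{T}$ closed and connected. Since $\hat{G}_{\ant}\hat{T}$ is commutative and in fact semi-abelian (use Lemma~\ref{Lem.Aff_Ant_of_GmodR_uG} together with the fact that the central torus $(\hat{G}_{\ant})_{\aff}$ lies in the maximal torus $\hat{T}$ of the reductive group $\hat{G}_{\aff}$), every question reduces to torsion-translates of closed connected subgroups inside a commutative group, where the classical Manin--Mumford toolkit applies.

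For \eqref{Prop.uSUV1}, I would observe that $h'S'$ is closed and irreducible, that $\rho$ is finite hence closed, so $\rho(h'S')$ is closed and irreducible in $\hat{G} \aquot \hat{G}$; Corollary~\ref{Cor.Preimage_irred_under_alg_quotient_general} applied to $\hat{G}$ then makes $\pi_{\hat{G}}^{-1}(\rho(h'S'))$ irreducible and closed, and an additional preimage under the surjection $p$ with connected (unipotent) kernel $R_u(G_{\aff})$ preserves both. For \eqref{Prop.uSUV2}, the subgroup $S'$ is closed connected inside the semi-abelian $\hat{G}_{\ant}\hat{T}$, hence semi-abelian itself, hence has dense torsion; combined with $h'$ torsion this forces $(h'S')_{\tor}$ dense in $h'S'$, and Lemma~\ref{Lem.Properties.utorsion}\eqref{Lem.Properties.utorsion3} identifies $X_{\utor}$ with $\pi^{-1}(\rho((h'S')_{\tor}))$. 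A dimension count, combining the irreducibility of $X$ from \eqref{Prop.uSUV1} with the fact that every fiber of $\pi$ is irreducible of constant dimension (Corollary~\ref{Cor.Conjugacy_classes_general}\eqref{Cor.Conjugacy_classes_general5}), then forces $\overline{X_{\utor}} = X$.

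For \eqref{Prop.uSUV3}, writing $X_i = \pi^{-1}(\rho(h_i'S_i'))$ and using that $\rho$ is a geometric quotient by the Weyl group $W$, we have
\[
\rho^{-1}\bigl(\rho(h_1'S_1') \cap \rho(h_2'S_2')\bigr) = \bigcup_{w_1, w_2 \in W} \bigl(w_1 h_1'S_1'w_1^{-1}\bigr) \cap \bigl(w_2 h_2'S_2'w_2^{-1}\bigr),
\]
a finite union of intersections of torsion-translates of closed connected subgroups in the commutative group $\hat{G}_{\ant}\hat{T}$. By Lemma~\ref{Lem.Subtori_I} each such pairwise intersection is itself a finite union of torsion-translates of closed connected subgroups; applying $\rho$ and then $\pi^{-1}$ exhibits $X_1 \cap X_2$ as a finite union of unipotent-special subvarieties.

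For \eqref{Prop.uSUV4}, with $X = \overline{C_G(tS)}$ a special subvariety ($t \in Z_G(S)_{\tor}$, $S$ connected commutative), the construction in the proof of Lemma~\ref{Lem.uSUV} produces $h' \in (\hat{G}_{\ant}\hat{T})_{\tor}$ and a closed connected $S' \subseteq \hat{G}_{\ant}\hat{T}$ with $\pi(tS) = \rho(h'S')$, which is already closed in $\hat{G}\aquot\hat{G}$ since $\rho$ is finite. Continuity and conjugation-invariance of $\pi$ then give $\pi(X) = \overline{\pi(C_G(tS))} = \overline{\pi(tS)} = \pi(tS)$, whence $\pi^{-1}(\pi(X)) = \pi^{-1}(\rho(h'S'))$ is unipotent-special. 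The main delicate points will be making the dimension argument in \eqref{Prop.uSUV2} precise -- propagating density of $\pi(X_{\utor})$ in $\pi(X)$ through the constant-dimension irreducible fibers to density of $X_{\utor}$ in $X$ -- and confirming that the commutative lemma invoked in \eqref{Prop.uSUV3} produces genuine torsion-translates of closed connected subgroups in the semi-abelian ambient group, rather than merely translates by arbitrary intersection points.
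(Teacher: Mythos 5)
Your proof follows essentially the same route as the paper's: invoke Lemma~\ref{Lem.uSUV} to reduce everything to torsion-translates of closed connected subgroups of the semi-abelian group $\hat{G}_{\ant}\hat{T}$, and then push the classical commutative arguments through $\rho$ and $\pi^{-1}$. Parts~\eqref{Prop.uSUV1}, \eqref{Prop.uSUV2}, and~\eqref{Prop.uSUV4} match the paper's proof in all essentials (for~\eqref{Prop.uSUV2} the paper cites Proposition~\ref{Prop.dense} for density of torsion in $hS$ and Corollary~\ref{Cor.X_utor_dense} to propagate density through the fibers; your phrasing inlines the same dimension-and-irreducible-fibers argument, and for~\eqref{Prop.uSUV4} the paper presents the conclusion as a squeeze $\pi(tS) \subseteq \pi(X) \subseteq \overline{\pi(C_G(tS))} = \pi(tS)$ rather than as a chain of equalities, but the content is identical).

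The one genuine gap is exactly the point you flag at the end for~\eqref{Prop.uSUV3}, though not for the reason you suggest: Lemma~\ref{Lem.Subtori_I} already guarantees a \emph{torsion} translate $h_3(S_1' \cap S_2')$, so "translates by arbitrary intersection points" is not the issue. The issue is that $S_1' \cap S_2'$ need not be \emph{connected}. To conclude that the intersection is a finite union of torsion-translates of \emph{connected} closed subgroups — which Definition~\ref{Def.uSUV} requires — you must additionally invoke Lemma~\ref{Lem.Conn_Comp_Comm}, which shows a commutative algebraic group is the finite union of torsion-translates of its identity component. The paper cites both Lemma~\ref{Lem.Subtori_I} and Lemma~\ref{Lem.Conn_Comp_Comm} at this step; your argument cites only the former.
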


For the proof of~\eqref{Prop.uSUV3}, we need the following lemma:

\begin{lemma}
	\label{Lem.Subtori_I}
	Let $H$ be a connected commutative  algebraic group, let $S_1, S_2$ be connected closed subgroups of $H$
	and let $h_1, h_2 \in H$ be torsion points. 
	If the intersection $h_1 S_1 \cap h_2 S_2$ is non-empty, then $h_1 S_1 \cap h_2 S_2 = h_3 (S_1 \cap S_2)$ for some
	torsion point $h_3 \in H$. 
\end{lemma}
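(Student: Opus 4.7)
The plan is to prove the lemma in two steps. First, I observe that if $h_1 S_1 \cap h_2 S_2$ is non-empty, then for any $x$ in the intersection we have $h_1 S_1 = x S_1$ and $h_2 S_2 = x S_2$ (since $S_1,S_2$ are subgroups and $x \in h_i S_i$), so that
\[
h_1 S_1 \cap h_2 S_2 \;=\; x S_1 \cap x S_2 \;=\; x(S_1 \cap S_2).
\]
Thus the lemma reduces to showing that a torsion representative $h_3 \in h_1 S_1 \cap h_2 S_2$ exists.

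To produce such an $h_3$, I would start from an arbitrary $x \in h_1 S_1 \cap h_2 S_2$ and write $x = h_1 s_1 = h_2 s_2$ with $s_i \in S_i$. Let $N$ be a common multiple of the orders of the torsion elements $h_1$ and $h_2$. Using the commutativity of $H$ we obtain
\[
x^N \;=\; h_1^N s_1^N \;=\; s_1^N \qquad\text{and}\qquad x^N \;=\; h_2^N s_2^N \;=\; s_2^N,
\]
so that $s_1^N = s_2^N$ and this common value, which equals $x^N$, lies in $S_1 \cap S_2$. If $r$ denotes the number of connected components of the closed subgroup $S_1 \cap S_2$, then $x^{Nr} \in (S_1 \cap S_2)^\circ$.

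The final step uses that $(S_1 \cap S_2)^\circ$ is a connected commutative algebraic group, hence divisible by Lemma~\ref{Lem.Comm_conn_group_div}. Therefore there exists $u \in (S_1 \cap S_2)^\circ$ with $u^{Nr} = x^{-Nr}$, and the element $h_3 \coloneqq x u$ satisfies $h_3^{Nr} = x^{Nr} u^{Nr} = 1$ by commutativity, so it is torsion. Since $u \in S_1 \cap S_2$, we have $h_3 \in x(S_1 \cap S_2) = h_1 S_1 \cap h_2 S_2$, which together with the first step gives
\[
h_1 S_1 \cap h_2 S_2 \;=\; h_3 (S_1 \cap S_2).
\]
The only mildly delicate point is ensuring that the correction $u$ can be chosen inside $S_1 \cap S_2$ itself rather than merely in $S_1$ or $S_2$ separately; this is precisely where the passage to the connected component $(S_1 \cap S_2)^\circ$ and the divisibility of connected commutative algebraic groups come in.
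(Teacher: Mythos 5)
Your proof is correct and follows essentially the same strategy as the paper's: reduce to finding a torsion representative of the coset, observe that a power of any element in the intersection lands in $S_1 \cap S_2$ (then in its identity component after passing to a further multiple), and use divisibility of connected commutative algebraic groups (Lemma~\ref{Lem.Comm_conn_group_div}) to correct to a torsion element. The only cosmetic difference is that the paper normalizes $h_1 = 1$ at the outset, which you do not need since you work with a common multiple of the orders of $h_1$ and $h_2$.
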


\begin{proof}
	Without loss of generality we may assume that $h_1 = 1$. Let $x \in S_1 \cap h_2 S_2$.
	Then $x^m \in S_1 \cap S_2 \eqqcolon S$ for some $m > 0$ (that satisfies $h_2^m = 1$). By multiplying $m$ with 
	the index of the connected component $S^\circ$ in $S$ we may assume that $x^m \in S^\circ$. 
	Since $S^\circ$ is divisible (Lemma~\ref{Lem.Comm_conn_group_div}) there exists 
	$s \in S^\circ$ with $s^m = x^m$. Then $h_3 \coloneqq xs^{-1}  \in H$ is a torsion point that belongs
	to $S_1 \cap h_2 S_2$. We have now $h_3 S = S_1 \cap h_2 S_2$.
\end{proof}

\begin{proof}[Proof of Proposition~\ref{Prop.uSUV}]
	\eqref{Prop.uSUV1}: Let $X \subseteq G$ be a unipotent-special subvariety. 
	By Remark~\ref{Rem.uSUV} $\pi(X)$ is closed in $\hat{G} \aquot \hat{G}$
	and it is irreducible. The statement follows thus from Corollary~\ref{Cor.Preimage_irred_under_alg_quotient_general}.
	
	\eqref{Prop.uSUV2}: Write $X = \pi^{-1}(\rho(hS))$ where $h \in (\hat{G}_{\ant}\hat{T})_{\tor}$
	and $S \subseteq \hat{G}_{\ant}\hat{T}$ is a closed connected subgroup, 
	see Lemma~\ref{Lem.uSUV}.
	Then $hS_{\tor}$ is dense in $hS$ (see Proposition~\ref{Prop.dense}) 
	and hence $\rho(hS_{\tor})$ is dense in $\rho(hS) = \pi(X)$ (see Lemma~\ref{Lem.top}\eqref{Lem.top1}).
	As $p^{-1}(hS_{\tor})$ is contained in $X_{\utor}$ 
	(see Lemma~\ref{Lem.Properties.utorsion}\eqref{Lem.Properties.utorsion0}), 
	we conclude that $\pi(X_{\utor})$ is dense in $\pi(X)$. 
	Now, Corollary~\ref{Cor.X_utor_dense} implies~\eqref{Prop.uSUV2}.
	
	\eqref{Prop.uSUV3}: For $i=1, 2$, let $S_i \subseteq \hat{G}_{\ant}\hat{T}$ 
	be a closed connected subgroup and let
	$h_i \in (\hat{G}_{\ant}\hat{T})_{\tor}$ such that $X_i = \pi^{-1}(\rho(h_i S_i))$, see Lemma~\ref{Lem.uSUV}.  
	Then
	\[
	\rho^{-1}(\rho(h_1 S_1)) \cap \rho^{-1}(\rho(h_2 S_2)) = 
	\bigcup_{w_1, w_2 \in W} w_1 h_1 S_1 w_1^{-1} \cap w_2 h_2 S_2 w_2^{-1} \, ,
	\]
	By Lemmas~\ref{Lem.Subtori_I} and~\ref{Lem.Conn_Comp_Comm} there exist
	commutative connected closed subgroups $A_1, \ldots, A_n$ of $\hat{G}_{\ant}\hat{T}$  
	and $b_1, \ldots, b_n \in (\hat{G}_{\ant}\hat{T})_{\tor}$ such that  
	\[
	\rho^{-1}(\rho(h_1 S_1)) \cap \rho^{-1}(\rho(h_2 S_2)) = \bigcup_{i=1}^n b_i A_i \, .
	\]
	Hence $\rho(h_1 S_1) \cap \rho(h_2 S_2) = \bigcup_i \rho(b_i A_i)$ and thus 
	$X_1 \cap X_2 = \bigcup_i \pi^{-1}(\rho(b_i A_i))$. This implies~\eqref{Prop.uSUV3} by
	using Lemma~\ref{Lem.uSUV}.
	
	\eqref{Prop.uSUV4}: There exist a commutative connected closed subgroup $S \subseteq G$ such that $S_{\tor}$ 
	is dense in $S$ and a $t \in G_{\tor}$ that commutes with $S$ such that $X = \overline{C_G(tS)}$. 
	By definition, $\pi^{-1}(\pi(tS))$ is a unipotent-special subvariety.
	By Remark~\ref{Rem.uSUV}, it follows that $\pi(tS)$ is closed in  $\hat{G} \aquot \hat{G}$. 
	This implies that $\pi(tS) \subseteq \pi(X) \subseteq \overline{\pi(C_G(tS))}
	= \pi(tS)$. Hence $\pi(X) = \pi(tS)$ and thus $\pi^{-1}(\pi(X))$ is a unipotent-special subvariety of $G$.
\end{proof}

\begin{remark}
	If $R_u(G_{\aff})$ is trivial, i.e.~$G =\hat{G}$, then for every subset $X \subseteq G$ that is invariant under
	$G$-conjugation we have that
	\[
		\pi^{-1}(\pi(X)) = \set{ u x_s}{ \textrm{$x\in X$, $u \in G$ is unipotent and $u x_s = x_s u$}}
	\]
	by Corollary~\ref{Cor.Conjugacy_classes_general}\eqref{Cor.Conjugacy_classes_general4}. Thus
	$\pi^{-1}(\pi(X))$ arises from $X$ by multiplying the semisimple part of every element in $X$ with all 
	unipotent elements in $G$ that commute with the semi-simple part.
\end{remark}

\begin{theorem}
	\label{Thm.MMalg_unipot}
	If $X \subseteq G$ is a subset such that $X_{\utor}$ is dense in $X$,
	then $\pi^{-1}(\overline{\pi(X)})$ is a finite union of unipotent-special subvarieties of $G$.
\end{theorem}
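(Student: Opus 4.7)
The plan is to reduce Theorem~\ref{Thm.MMalg_unipot} directly to the commutative Manin--Mumford theorem (Theorem~\ref{Thm.MM}) by pulling $\overline{\pi(X)}$ back under the finite map $\rho$ to a closed subset of the commutative connected algebraic group $\hat{G}_{\ant}\hat{T}$, and then to describe the resulting pieces as unipotent-special subvarieties via Lemma~\ref{Lem.uSUV}.

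First I would pass to the quotient: since $X_{\utor}$ is dense in $X$ and $\pi$ is continuous and surjective onto its image, Lemma~\ref{Lem.top}\eqref{Lem.top1} gives that $\pi(X_{\utor})$ is dense in $\overline{\pi(X)}$. Set
\[
Y \coloneqq \rho^{-1}\bigl(\overline{\pi(X)}\bigr) \subseteq \hat{G}_{\ant}\hat{T},
\]
which is closed. Because $\rho$ is the geometric quotient of $\hat{G}_{\ant}\hat{T}$ by the finite Weyl group $W$ (Corollary~\ref{Cor.Restriction_of_alg_quotient_general}), Corollary~\ref{Cor.Geometric_quotient_density} applied to $\rho|_Y\colon Y \to \overline{\pi(X)}$ yields that $\rho^{-1}(\pi(X_{\utor}))$ is dense in $Y$. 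By Lemma~\ref{Lem.Properties.utorsion}\eqref{Lem.Properties.utorsion2} we have $\rho^{-1}(\pi(X_{\utor})) = \rho^{-1}(\pi(X))_{\tor} \subseteq Y_{\tor}$, so $Y_{\tor}$ is dense in $Y$.

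Now I would invoke Theorem~\ref{Thm.MM} for the commutative connected algebraic group $\hat{G}_{\ant}\hat{T}$ (its commutativity follows from the fact that $\hat{G}_{\ant}$ is central in $\hat{G}$ and $\hat{T}$ is a torus): there exist connected closed subgroups $S_1, \ldots, S_r \subseteq \hat{G}_{\ant}\hat{T}$ and torsion points $h_1, \ldots, h_r \in (\hat{G}_{\ant}\hat{T})_{\tor}$ with
\[
Y \;=\; \bigcup_{i=1}^r h_i S_i.
\]
Since $\rho$ is surjective onto $\hat{G}\aquot\hat{G}$ we have $\rho(Y)=\overline{\pi(X)}$, hence
\[
\pi^{-1}\bigl(\overline{\pi(X)}\bigr) \;=\; \pi^{-1}(\rho(Y)) \;=\; \bigcup_{i=1}^r \pi^{-1}(\rho(h_i S_i)).
\]
The characterization of unipotent-special subvarieties in Lemma~\ref{Lem.uSUV} identifies each term $\pi^{-1}(\rho(h_i S_i))$ as a unipotent-special subvariety of $G$, completing the proof.

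The main obstacle — which has already been built up in the preceding sections — is the transfer of density of unipotent-torsion from $X$ to ordinary torsion on the closed subset $Y$ of the commutative group $\hat{G}_{\ant}\hat{T}$; this is what makes the commutative Manin--Mumford theorem applicable. The key technical inputs are the equality $\rho^{-1}(\pi(X_{\utor})) = \rho^{-1}(\pi(X))_{\tor}$ from Lemma~\ref{Lem.Properties.utorsion}\eqref{Lem.Properties.utorsion2}, the finiteness of $\rho$, and the precise description of unipotent-special subvarieties as preimages $\pi^{-1}(\rho(hS))$ supplied by Lemma~\ref{Lem.uSUV}; once these are in place the argument is essentially a diagram chase and no further geometric input is required.
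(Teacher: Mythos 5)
Your proof is correct and takes essentially the same route as the paper: pull $\overline{\pi(X)}$ back under the finite quotient $\rho$, use Lemma~\ref{Lem.Properties.utorsion}\eqref{Lem.Properties.utorsion2} together with Lemma~\ref{Lem.top}\eqref{Lem.top1} and Corollary~\ref{Cor.Geometric_quotient_density} to transfer density of unipotent-torsion points on $X$ to density of torsion points on $\rho^{-1}(\overline{\pi(X)})$, apply Theorem~\ref{Thm.MM} in $\hat{G}_{\ant}\hat{T}$, and finish via Lemma~\ref{Lem.uSUV}. The only cosmetic difference is that you establish density on $Y=\rho^{-1}(\overline{\pi(X)})$ in one stroke, whereas the paper first works with $\rho^{-1}(\pi(X))$ and then passes to its closure.
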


\begin{proof}
	Lemma~\ref{Lem.Properties.utorsion}\eqref{Lem.Properties.utorsion2} gives
	$\rho^{-1}(\pi(X))_{\tor} = \rho^{-1}(\pi(X_{\utor}))$. Now, Lemma~\ref{Lem.top}\eqref{Lem.top1} and Corollary~\ref{Cor.Geometric_quotient_density} imply that
	$\rho^{-1}(\pi(X))_{\tor}$ is dense in $\rho^{-1}(\pi(X))$. Since $\pi(X)$ is dense in $\overline{\pi(X)}$,
	it follows that $\rho^{-1}(\pi(X))$ is dense in $\rho^{-1}(\overline{\pi(X)})$, cf.~Corollary~\ref{Cor.Geometric_quotient_density}. Hence $\rho^{-1}(\overline{\pi(X)})_{\tor}$
	is dense in $\rho^{-1}(\overline{\pi(X)})$. By Theorem~\ref{Thm.MM}, 
	there exist closed connected subgroups $S_1, \ldots, S_n \subseteq \hat{G}_{\ant} \hat{T}$
	and $h_1, \ldots, h_n \in (\hat{G}_{\ant} \hat{T})_{\tor}$ such that 
	$\rho^{-1}(\overline{\pi(X)}) = h_1 S_1 \cup \ldots \cup h_n S_n$. Hence,
	\[
		\pi^{-1}\left(\overline{\pi(X)}\right) = \pi^{-1}(\rho(h_1 S_1)) \cup \ldots \cup \pi^{-1}(\rho(h_n S_n)) \, .
	\]
	By Lemma~\ref{Lem.uSUV}, $\pi^{-1}(\rho(t_i S_i))$ is a unipotent-special subvariety for all $i$.
\end{proof}

\begin{remark}
	If $Y \subseteq G$ is a unipotent-special subvariety, then $\pi(Y)$ is closed in $\hat{G} \aquot \hat{G}$ by Remark~\ref{Rem.uSUV}.
	Hence, in order that the conclusion of Theorem~\ref{Thm.MMalg_unipot} holds, we are forced to take the closure $\overline{\pi(X)}$ in the expression~$\pi^{-1}(\overline{\pi(X)})$.
\end{remark}

\section{Canonical heights and Bogomolov's conjecture} 
In this section we assume that $\kk$ is the field of algebraic numbers $\overline{\Q}$.
Moreover, we use again Notation~\ref{Notation.variation} in this section, i.e.~$p \colon G \to \hat{G}$
denotes the quotient by the unipotent radical of $G$, $\pi_{\hat{G}} \colon \hat{G} \to \hat{G} \aquot \hat{G}$ denotes the categorical
quotient by conjugation, $\pi$ denotes the composition
\[
\pi \colon G \xrightarrow{p} \hat{G} \xrightarrow{\pi_{\hat{G}}} \hat{G} \aquot \hat{G}
\]
and $\hat{T}$ is a fixed maximal torus in $\hat{G}_{\aff}$.
In order to ease notation we set in this section 
\[ \hat{S} = \hat{G}_{\text{ant}}\hat{T}, 
\]
which is a semi-abelian variety defined over a number field. Moreover, let
$\rho \coloneqq \pi_{\hat{G}} |_{\hat{S}} \colon \hat{S} \to \hat{G} \aquot \hat{G}$. Recall that $\rho$ 
is the geometric quotient of
$\hat{S}$ by $W$-conjugation, where
$W$ denotes the Weyl group of $G_{\aff}$ associated to $\hat{T}$, 
see Corollary~\ref{Cor.Restriction_of_alg_quotient_general}. \\

We now introduce heights, which are functions on algebraic points that satisfy desirable arithmetic  properties \cite{hindrysilverman}. We say that a height satisfies the 
\emph{Northcott property} if the set of points of bounded degree (over $\Q$) and height is finite.

We recall that $\hat{S}$ is an extension of an abelian variety $A$ by  some algebraic torus 
$\GG_m^t, t \geq 0$. We denote by $\pi_A: \hat{S}\rightarrow A$ the canonical projection to $A$, 
which is a locally trivial principal
$\GG_m^t$-bundle (with respect to the Zariski topology), see~e.g.~\cite[Proposition~14]{Se1958Espaces-fibres-alg}. 
We compactify $\hat{S}$ smoothly
in a $\GG_m^t$-equivariant way  as in 
\cite[Sect.~1]{Hi1988Autour-dune-conjec} via  
\[
	\hat{S}_{\text{eq.}} = \hat{S}\times^{\mathbb{G}_m^t}(\mathbb{P}^1)^{t} \, .
\]
This turns $\hat{S}_{\text{eq.}}$ into a locally trivial $(\mathbb{P}^1)^{t}$-bundle over $A$ 
(with respect to the Zariski topology) and we denote this bundle again by $\pi_A \colon \hat{S}_{\text{eq.}} \to A$. 
We set $D_\infty$ to be the divisor at infinity $D_\infty = \hat{S}_{eq.}\setminus \hat{S}$ endowed
with the reduced scheme structure. For every integer $n \geq 2$, it holds that 
\[
	[n]^*D_\infty \ \sim \ nD_\infty \quad \textrm{are linearly equivalent} \, ,
\]
where $[n]$ is the multiplication by $n$ morphism on $\hat{S}$ \cite[Lemme~2(ii)]{Hi1988Autour-dune-conjec}. We set 
\[
	\hat{h}_\ell \colon \hat{S} \to \RR_{\geq 0} \, , \quad	 
	g \mapsto \lim_{k \rightarrow \infty}\frac{h_{D_\infty}([2^k]g)}{2^k} \, ,
\]
where $h_{D_\infty}$ is the Weil-height on $\hat{S}_{\text{eq.}}$ attached to $D_\infty$ (see \cite[Theorem~B.3.2]{hindrysilverman}). This limit exists 
and is non-negative, which can be shown using  the Néron-Tate limiting argument 
and the fact that $D_{\infty}$ is an effective divisor such that the base-locus of $|D|$ is outside of $\hat{S}$, 
\cite[Theorem~B.4.1, Theorem~B.3.2(e)]{hindrysilverman}. Note that if the forward orbit $\set{[2^k]g}{k \in \NN}$ under $[2]$ 
is finite for some $g \in \hat{S}$, then $\hat{h}_\ell(g) = 0$.

We also choose an ample symmetric divisor $L$ on $A$
(e.g. one can take any very ample divisor $H$ on $A$ and choose $L = H + [-1]^\ast H$). Then for any $n \geq 2$
\[
		[n]^*L \ \sim \ n^2 L \quad \textrm{are linearly equivalent} \, ,
\]
see e.g.~\cite[Corollary~A.7.2.5]{hindrysilverman}. We set
\[
 	\hat{h}_q \colon \hat{S} \to \RR_{\geq 0} \, , \quad 
 	g \mapsto \lim_{k \rightarrow \infty}\frac{h_{L}( [2^k] \pi_A(g) )}{ 4^{k}} \, ,
\]
where $h_{L}$ is the Weil-height on $A$ attached to $L$. This limit exists, is non-negative and $\hat{h}_q(g)$ vanishes 
if and only if $\pi_A(g)$ is a torsion point of $A$, \cite[Theorem~B.4.1, Proposition~B.5.3(a)]{hindrysilverman}. 

\begin{definition}\label{defheight1}
	We define the \emph{canonical height} $\hat{h}_{\hat{S}}$ on $\hat{S}$ to be 
	\[
		\hat{h}_{\hat{S}} = \hat{h}_q + \hat{h}_\ell \, .
	\]
\end{definition}

Using \cite[Theorem~B.4.1(i), Theorem~B.3.2(b),(c)]{hindrysilverman} it follows that
$\hat{h}_{\hat{S}}$ is the same as the Weil-height $h_{D_{\infty} + \pi_A^\ast(L)}$ up to a bounded function.
By \cite[Lemme~1]{Hi1988Autour-dune-conjec} the divisor $D_{\infty} + \pi_A^\ast(L)$ is ample. Thus
$h_{D_{\infty} + \pi_A^\ast(L)}$ (and hence $\hat{h}_{\hat{S}}$) 
satisfies the Northcott property \cite[Theorem~B.3.2(g)]{hindrysilverman}.

\begin{remark}	
	\label{Rem.Torsion_on_hatS}
	The height $\hat{h}_{\hat{S}}$ vanishes exactly on the torsion points of $\hat{S}$. Indeed, let $g \in \hat{S}$.
	If $g$ is a torsion point, then $\set{[2^k]g}{k \in \NN}$ is finite and thus $\hat{h}_q$, 
	$\hat{h}_\ell$ both vanish on $g$. On the other hand, if $\hat{h}_{\hat{S}}(g) = 0$, then $\pi_A(g)$ is a torsion point, i.e.~there exists $m \geq  2$ such that $[m]g \in \hat{T}$. By \cite[Theorem~B.3.2(b),(c),(d)]{hindrysilverman}
	there is a constant $C_m$ such that $|h_{D^\infty}([m]q) - m h_{D^\infty}(q) | < C_m$
	for all $q \in \hat{S}$. By replacing $q$ with 
	$[2^k]g$, dividing by $2^k$ and taking the limit we get thus
	\[
		\hat{h}_{\ell}([m]g) = \lim_{k \rightarrow \infty}\frac{h_{D_\infty}([m2^k]g)}{2^k}
		= m \hat{h}_{\ell}(g) = 0 \, .
	\] 
	Let $\iota \colon (\PP^1)^t \hookrightarrow \hat{S}_{\text{eq.}}$ be a closed $\GG_m^t$-equivariant
	immersion with image the fibre of $\pi_A$ over the neutral element of $A$. Note that
	\begin{equation}
		\label{Eq.NicePullback}
		[2]^\ast \iota^\ast D_{\infty} = \iota^\ast [2]^\ast D_{\infty} \ \sim \ 2 \iota^\ast D_{\infty} \, .
	\end{equation}
	Let $P \in (\PP^1)^t$ with $\iota(P) = [m]g$. There is a constant $C$ such that
	\[
		|h_{\iota^\ast D^\infty}([2^k]P) -  h_{D^\infty}([2^k] \iota(P)) | < C \quad \textrm{for all $k \geq 0$}
	\]
	by \cite[Theorem~B.3.2(b)]{hindrysilverman}. Dividing by $2^k$ and taking the limit gives:
	\[
		0 = \hat{h}_{\ell}([m]g) = \lim_{k \to \infty} \frac{h_{\iota^\ast D^\infty}([2^k]P)}{2^k} \, .
	\]
	Since $\iota^\ast D_{\infty}$ is (very) ample and using~\eqref{Eq.NicePullback}, it follows that
	$P$ is a torsion point of $\GG_m^t$ and hence, the same holds for 
	$g^m$, see~\cite[Propostion~B.4.2(a)]{hindrysilverman}.
\end{remark}


We now  note that it is easy to construct a  canonical height on $\hat{S}$ that is invariant under the action of the Weyl group of $\hat{G}_{\text{aff}}$.  
\begin{definition}\label{inv1}
	Let   $W$ be the Weyl-group of $\hat{G}_{\text{aff}}$ and let 
	$\hat{h}_{\hat{S}}$ be the canonical height constructed in Definition~\ref{defheight1}. We define 
	\[
		\hat{h}^W_{\hat{S}} \colon \hat{S} \to \RR_{\geq 0} \, ,
		\quad g \mapsto \sum_{w \in W}\hat{h}_{\hat{S}}(wgw^{-1}) \, .
	\]
\end{definition} 
Note that this is well-defined as $W$ is finite and that $\hat{h}_{\hat{S}}^W$ satisfies the Northcott property. 
We can now define a canonical height on $G$. 
\begin{definition}\label{defheight2} For $\hat{g} \in \hat{G}$, we define 
	$\hat{h}_{\hat{G}}(\hat{g}) = \hat{h}^W_{S}(\hat{s})$, where $\hat{s} \in \hat{S}$ is an element that is 
	conjugated to the semi-simple part $\hat{g}_s$ of $\hat{g}$, see Lemma~\ref{Lem.torsion_points} 
	for the existence of $\hat{s}$. 
	From Lemma~\ref{Lem.Jordan}, Corollary~\ref{Cor.Conjugacy_classes_general}\eqref{Cor.Conjugacy_classes_general1} 
	and Definition~\ref{inv1} it
	follows that this is well-defined. For $g \in G$ we then set 
	\[
		\hat{h}_G(g) = \hat{h}_{\hat{G}}(p(g)) \, .
	\] 
\end{definition}

\begin{remark}
	\label{Rem.Height}
	In other words, if $g \in G$ and $\hat{s} \in \rho^{-1}(\pi(g))$, then $\hat{h}_{G}(g) = \hat{h}_{\hat{G}}(\hat{s})$,
	since $\pi(g) = \pi_{\hat{G}}(\hat{g}) = \pi_{\hat{G}}(\hat{g}_s)$ for $\hat{g} = p(g)$, see Corollary~\ref{Cor.Conjugacy_classes_general}\eqref{Cor.Conjugacy_classes_general3}.
\end{remark}

The above constructed height function $\hat{h}_G \colon G \to \RR_{\geq 0}$ is invariant under $G$-conjugation and moreover:

\begin{lemma} 
	\label{Lem.Height_u_tor}
	The height function $\hat{h}_G \colon G \to \RR_{\geq 0}$ 
	vanishes exactly on the unipotent-torsion elements of $G$.  
\end{lemma}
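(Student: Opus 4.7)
The plan is to trace the chain of equivalences dictated by the definition of $\hat{h}_G$ and reduce the question to the analogous vanishing property of $\hat{h}_{\hat{S}}$ that was already verified in Remark~\ref{Rem.Torsion_on_hatS}.

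First, I would fix $g \in G$ and set $\hat{g} \coloneqq p(g) \in \hat{G}$. By Definition~\ref{defheight2} we have
\[
\hat{h}_G(g) \;=\; \hat{h}_{\hat{G}}(\hat{g}) \;=\; \hat{h}_{\hat{S}}^W(\hat{s}) \;=\; \sum_{w \in W} \hat{h}_{\hat{S}}(w\hat{s}w^{-1}),
\]
where $\hat{s} \in \hat{S}$ is any element conjugated to the semisimple part $\hat{g}_s$. Since each summand is non-negative, $\hat{h}_G(g) = 0$ if and only if $\hat{h}_{\hat{S}}(w\hat{s}w^{-1}) = 0$ for every $w \in W$. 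In particular this forces $\hat{h}_{\hat{S}}(\hat{s}) = 0$, and conversely if $\hat{s}$ is torsion then so is every $W$-conjugate, so by Remark~\ref{Rem.Torsion_on_hatS} the vanishing on one $W$-orbit representative already gives vanishing on all of them. Hence $\hat{h}_G(g) = 0$ if and only if $\hat{s}$ is a torsion point of $\hat{S}$.

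Next, I would pass from $\hat{s}$ back to $g$. Since $\hat{s}$ is conjugated to $\hat{g}_s$ in $\hat{G}$, $\hat{s}$ is torsion iff $\hat{g}_s$ is torsion; and by the observation right after Definition~\ref{Def.utorsion}, $\hat{g}$ is unipotent-torsion precisely when $\hat{g}_s$ is torsion. Finally, Lemma~\ref{Lem.Properties.utorsion}\eqref{Lem.Properties.utorsion0} gives $p^{-1}(\hat{G}_{\utor}) = G_{\utor}$, so $\hat{g} = p(g)$ being unipotent-torsion is equivalent to $g$ being unipotent-torsion. Concatenating these equivalences yields the claim.

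No step here looks like a real obstacle: the content is entirely packaged into Remark~\ref{Rem.Torsion_on_hatS} (the vanishing locus of $\hat{h}_{\hat{S}}$), Lemma~\ref{Lem.torsion_points} together with Corollary~\ref{Cor.Conjugacy_classes_general}\eqref{Cor.Conjugacy_classes_general1} (well-definedness of picking $\hat{s}$ in $\hat{S}$), and Lemma~\ref{Lem.Properties.utorsion}\eqref{Lem.Properties.utorsion0} (behaviour of unipotent-torsion under $p$). The only mildly subtle point worth stating carefully is the first one, namely that vanishing of $\hat{h}_{\hat{S}}^W$ at $\hat{s}$ is equivalent to vanishing of $\hat{h}_{\hat{S}}$ at $\hat{s}$, which relies on the non-negativity of $\hat{h}_{\hat{S}}$ together with $W$-invariance of torsion.
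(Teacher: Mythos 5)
Your proposal is correct and follows essentially the same route as the paper: unwind Definition~\ref{defheight2}, use non-negativity of the summands in $\hat{h}_{\hat{S}}^W$ to reduce to the vanishing locus of $\hat{h}_{\hat{S}}$, invoke Remark~\ref{Rem.Torsion_on_hatS}, and then transfer back to $G$ via Lemma~\ref{Lem.Properties.utorsion}\eqref{Lem.Properties.utorsion0}. You spell out the final transfer ($\hat{s}$ torsion $\Leftrightarrow$ $\hat{g}_s$ torsion $\Leftrightarrow$ $\hat{g}$ unipotent-torsion $\Leftrightarrow$ $g$ unipotent-torsion) a bit more explicitly than the paper, which leaves it implicit, but the argument is the same.
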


\begin{proof} 
	If $g \in G$ is unipotent-torsion, then  $\hat{g} \coloneqq p(g)$ is unipotent-torsion in $\hat{G}$. 
	Lemma~\ref{Lem.Jordan} implies that $\hat{g} = \hat{g}_s\hat{g}_u$ with $\hat{g}_s$ being torsion. 
	Then any element $\hat{s} \in \hat{S}$ that is conjugated to $\hat{g}_s$ is torsion as well
	and in particular the forward-orbit $\set{\hat{s}^{2^k}}{k \in \NN}$ is finite. Thus for every $w \in W$, we have that
	$\hat{h}_q(w\hat{s}w^{-1})$ and $\hat{h}_{\ell}(w\hat{s}w^{-1})$ vanish and hence, $\hat{h}_{G}(g)$ vanishes as well.
	
	On the other hand, if $\hat{h}_G(g) = 0$ for some $g \in G$, then the semi-simple part $\hat{g}_s$ of 
	$\hat{g} \coloneqq p(g)$ is conjugated to a point $\hat{s} \in \hat{S}$ with $\hat{h}_{\hat{S}}(\hat{s}) = 0$. 
	As the points on $\hat{S}$, whose canonical height vanish are exactly the torsion points of $\hat{S}$ 
	(see Remark~\ref{Rem.Torsion_on_hatS}) we conclude the proof.  
\end{proof}

\begin{theorem}\label{bog} Let $X \subset G$ be an irreducible subset such that $\pi^{-1}(\overline{\pi(X)})$ is not a unipotent-special subvariety.  
	Then, there exists an $\epsilon  = \epsilon(X)> 0$, such that the set 
	\[
	\set{g \in \pi^{-1}(\overline{\pi(X)}) }{ \hat{h}_G(g) < \epsilon }
	\]
	is not dense in $\pi^{-1}(\overline{\pi(X)})$. 
\end{theorem}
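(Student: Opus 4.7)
The plan is to argue by contradiction and reduce to the Bogomolov conjecture for semi-abelian varieties applied to $\hat{S}$. So assume that for every $\epsilon > 0$ the set $\{g \in \pi^{-1}(\overline{\pi(X)}) : \hat{h}_G(g) < \epsilon\}$ is dense in $\pi^{-1}(\overline{\pi(X)})$, and set $Y \coloneqq \overline{\pi(X)}$. Since $X$ is irreducible, so is $Y$, and $\pi^{-1}(Y)$ is irreducible by Corollary~\ref{Cor.Preimage_irred_under_alg_quotient_general}. The first step is to observe that $\hat{h}_G$ factors through $\pi$: by Definition~\ref{defheight2} and Remark~\ref{Rem.Height} the value $\hat{h}_G(g)$ depends only on $\pi(g)$, so there is a function $h_{\mathrm{down}} \colon \hat{G} \aquot \hat{G} \to \RR_{\geq 0}$ with $\hat{h}_G = h_{\mathrm{down}} \circ \pi$. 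Because the fibres of $\pi$ are irreducible (Corollary~\ref{Cor.Conjugacy_classes_general}\eqref{Cor.Conjugacy_classes_general5}), density in $\pi^{-1}(Y)$ of a $\pi$-saturated set is equivalent to density of its image in $Y$; hence for every $\epsilon > 0$ the set $\{y \in Y : h_{\mathrm{down}}(y) < \epsilon\}$ is dense in $Y$.

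Next I will lift the density statement from $Y$ to $\hat{S}$. Since $\rho \colon \hat{S} \to \hat{G} \aquot \hat{G}$ is the geometric quotient by the finite Weyl group $W$, Corollary~\ref{Cor.Geometric_quotient_density} applied to $\rho|_{\rho^{-1}(Y)} \colon \rho^{-1}(Y) \to Y$ yields that $\{\hat{s} \in \rho^{-1}(Y) : h_{\mathrm{down}}(\rho(\hat{s})) < \epsilon\}$ is dense in $\rho^{-1}(Y)$ for every $\epsilon > 0$. By construction
\[
	h_{\mathrm{down}}(\rho(\hat{s})) \;=\; \hat{h}^W_{\hat{S}}(\hat{s}) \;=\; \sum_{w \in W} \hat{h}_{\hat{S}}(w \hat{s} w^{-1}) \;\geq\; \hat{h}_{\hat{S}}(\hat{s}) \, ,
\]
so in fact $\{\hat{s} \in \rho^{-1}(Y) : \hat{h}_{\hat{S}}(\hat{s}) < \epsilon\}$ is dense in $\rho^{-1}(Y)$, and consequently dense in each irreducible component of $\rho^{-1}(Y)$.

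The main input will now be the Bogomolov conjecture for semi-abelian varieties, proved by David--Philippon \cite{davidphilippon} (see also \cite[Proposition~6.1]{kuehnesmall}). Since $\hat{h}_{\hat{S}}$ agrees, up to a bounded function, with the Weil height attached to the ample divisor $D_\infty + \pi_A^\ast(L)$ on $\hat{S}_{\mathrm{eq.}}$ (using \cite[Lemme~1]{Hi1988Autour-dune-conjec} and the standard Néron--Tate limiting argument), this theorem applies. It forces every irreducible component of $\rho^{-1}(Y)$ to be a torsion translate of a closed connected algebraic subgroup of $\hat{S}$. Write
\[
	\rho^{-1}(Y) \;=\; \bigcup_{i=1}^{n} h_i S_i \quad \text{with } h_i \in \hat{S}_{\tor} \text{ and } S_i \subseteq \hat{S} \text{ connected closed subgroups.}
\]

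To conclude, I apply $\rho$: since $\rho^{-1}(Y)$ is $W$-stable we get $Y = \rho(\rho^{-1}(Y)) = \bigcup_{i=1}^n \rho(h_i S_i)$, and irreducibility of $Y$ forces $Y = \rho(h_j S_j)$ for some $j$. Therefore $\pi^{-1}(Y) = \pi^{-1}(\rho(h_j S_j))$, which is a unipotent-special subvariety of $G$ by Lemma~\ref{Lem.uSUV}. This contradicts the hypothesis that $\pi^{-1}(\overline{\pi(X)})$ is not unipotent-special. The main obstacle in carrying out the plan is the passage from $Y$ to $\hat{S}$ in a way that truly transports the density property to the canonical height $\hat{h}_{\hat{S}}$ itself (and not just to its $W$-symmetrised sibling $\hat{h}^W_{\hat{S}}$); this is resolved by the trivial inequality $\hat{h}^W_{\hat{S}} \geq \hat{h}_{\hat{S}}$ pointwise, which is the reason for building $\hat{h}_G$ as a sum over $W$ in Definition~\ref{inv1}.
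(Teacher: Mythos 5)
Your proof is correct and follows essentially the same route as the paper. You run the argument by contrapositive---using Remark~\ref{Rem.Height} to factor $\hat{h}_G$ through $\pi$, Corollary~\ref{Cor.Geometric_quotient_density} to transport density along the finite geometric quotient $\rho$, the pointwise bound $\hat{h}_{\hat{S}} \leq \hat{h}^W_{\hat{S}}$ to pass to the genuine canonical height, David--Philippon to force every component of $\rho^{-1}(\overline{\pi(X)})$ to be a torsion translate, and Lemma~\ref{Lem.uSUV} to contradict the hypothesis---while the paper runs exactly the same chain forwards, first observing (via equidimensionality and Lemma~\ref{Lem.uSUV}) that no component of $\rho^{-1}(\overline{\pi(X)})$ can be a torsion translate, and then producing the $\epsilon$ from the Bogomolov theorem.
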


Note that $\pi^{-1}(\overline{\pi(X)})$ is irreducible by Corollary~\ref{Cor.Preimage_irred_under_alg_quotient_general}.

\begin{proof}
	Since $\overline{\pi(X)}$
	is irreducible, the irreducible components of $\rho^{-1}(\overline{\pi(X)})$ have all the same dimension. Then none of these irreducible components is a torsion translate of a connected closed subgroup of $\hat{S}$, since otherwise $\overline{\pi(X)} = \rho(hS)$ for a torsion point $h \in \hat{S}$ and  a connected
	closed subgroup $S \subseteq \hat{S}$ and thus $\pi^{-1}(\overline{\pi(X)})$ would be a unipotent-special subvariety of $G$ by Lemma~\ref{Lem.uSUV}.
	
	Let $Y = \rho^{-1}(\overline{\pi(X)}))$.
 	From the Bogomolov conjecture for semi-abelian varieties proven in \cite{davidphilippon} (and by means of equidistribution in 
 	\cite[Proposition~6.1]{kuehnesmall} (see also \cite[\S3]{KuehneBHC})) follows that there exists $\varepsilon = \epsilon(X) > 0$ such that the set $Y_{\varepsilon} \coloneqq \set{y \in Y}{\hat{h}_{\hat{G}}(y) < \varepsilon}$ is nowhere dense in $Y$. 
 	Since $\hat{h}_{\hat{G}} |_{\hat{S}} = \hat{h}_{\hat{S}}^W$ is invariant under $W$-conjugation (see Definition~\ref{inv1}), 
 	we get $Y_{\varepsilon} = \rho^{-1}(\rho(Y_{\varepsilon}))$.
 	From Corollary~\ref{Cor.Geometric_quotient_density} 
 	it follows that $\rho(Y_{\varepsilon})$ is not dense in $\rho(Y)$ and with Lemma~\ref{Lem.top}\eqref{Lem.top1} that 
 	$\pi^{-1}(\rho(Y_\varepsilon))$ is not dense in $\pi^{-1}(\rho(Y)) = \pi^{-1}(\overline{\pi(X)})$. 
 	It remains to show that 
	\[
		\pi^{-1}(\rho(Y_\epsilon)) = \set{g \in \pi^{-1}(\rho(Y)) }{ \hat{h}_G(g) < \epsilon } \, . 
	\]
	However, this follows immediately from Remark~\ref{Rem.Height}.
\end{proof}

We now make the connection to Breuillard's height $\hat{h}_{B}$ \cite[p.1058 and Sect.~2.2]{breuillardgap}. 
For this we assume that $G = \hat{G} = \GL_t(\overline{\QQ})$  and that 
$\hat{S} = \mathbb{G}_m^t$ is the diagonal maximal torus. According to \cite[p.1058]{breuillardgap} we have that 
\[
	\hat{h}_{B}(g) = h_t([1:\lambda_1:\cdots:\lambda_t]) \, ,
\]
where $(\lambda_1,  \dots,  \lambda_t)$ is a tuple of eigenvalues of $g$ and $h_n \colon \PP^n \to \RR_{\geq 0}$ 
is the Weil-height on $\mathbb{P}^n$ given by 
\begin{equation}
	\label{wheight} 
	h_n([\mu_0: \mu_1: \cdots : \mu_n]) = \frac1{[K:\mathbb{Q}]} \sum_{v \in M_K} n_v
	\log\max\{|\mu_0|_v,|\mu_1|_v,  \dots, |\mu_n|_v\},
\end{equation}
where $K$ is a number field containing $\mu_0, \dots, \mu_n$, $M_K$ is the set of absolute values of $K$, normalized as in the 
citation, $n_v = [K_v:\Q_v]$ and $K_v$, $\Q_v$ are the completions with respect to $v$. 
Note that  $h_t([1:\lambda_1:\cdots:\lambda_t])$ does not depend on  the ordering of the elements in the tuple $(\lambda_1, \dots, \lambda_t)$. 

The Weyl-group of $\GL_t(\overline{\QQ})$ with respect to $\mathbb{G}_m^t$ is equal to the symmetric group
$\mathcal{S}_t$. The divisor at infinity $D_{\infty}$ of $(\PP^1)^t$ induced the composition of the
natural closed embedding $\iota \colon (\PP^1)^t \to \PP^{2^t-1}$ coming from the Segre embedding
with the Veronese embedding $\eta \colon \PP^{2^t-1} \to \PP^{2^{t+1}-1}$ of degree $2$.
Note that $h_{2^{t+1}-1} \circ \eta \circ \iota = 2 \cdot \sum_{i=1}^t h_1 \circ \pr_i$, where $\pr_i \colon (\PP^1)^t \to \PP^1$
denotes the projection to the $i$-th factor, see~\cite[Proposition~B.2.4]{hindrysilverman}.
Using that $|h_{D_{\infty}}- h_{2^{t+1}-1} \circ \eta \circ \iota|$ is bounded (\cite[Theorem~B.3.2(a),(b)]{hindrysilverman})
and that $h_1([1: \lambda]) =  \lim_{k \to \infty} \frac{h_1([2^k] [1: \lambda])}{2^k}$ for all 
$\lambda \in \overline{\QQ}^\ast$, we get that our height $\hat{h}_G$ on $G = \GL_t(\overline{\QQ})$ is given by 
\[
	\hat{h}_G(g) = \sum_{\sigma \in \mathcal{S}_t} \lim_{k \to \infty}
	\frac{h_{D_{\infty}}([2^k](\lambda_{\sigma(1)}, \ldots, \lambda_{\sigma(t)}) )}{2^k} = 2 \cdot (t!) \cdot \sum_{i = 1}^th_1([1 :\lambda_i]) \, .
\]
Now we do have for all $g \in G$ the inequalities 
\begin{equation}
	\label{hG}
	\frac1{2 \cdot (t!)}\hat{h}_G(g) \leq t \hat{h}_{B}(g) \leq \hat{h}_G(g) \, ,
\end{equation}
and so for $G = \GL_t(\overline{\QQ})$, Theorem \ref{bog}  also holds with $\hat{h}_{G}$ replaced by
Breuillard's height $\hat{h}_B$.  

\providecommand{\bysame}{\leavevmode\hbox to3em{\hrulefill}\thinspace}
\providecommand{\MR}{\relax\ifhmode\unskip\space\fi MR }
\providecommand{\MRhref}[2]{%
	\href{http://www.ams.org/mathscinet-getitem?mr=#1}{#2}
}
\providecommand{\href}[2]{#2}


\begin{thebibliography}{CRRZ22}
	
	\bibitem[BD21]{barroero2021distinguished}
	Fabrizio Barroero and Gabriel~Andreas Dill, \emph{Distinguished categories and
		the {Z}ilber-{P}ink conjecture (ar{X}iv)}, 2021.
	
	\bibitem[Bor91]{Bo1991Linear-algebraic-g}
	Armand Borel, \emph{Linear algebraic groups}, second ed., Graduate Texts in
	Mathematics, vol. 126, Springer-Verlag, New York, 1991.
	
	\bibitem[Bre11]{breuillardgap}
	Emmanuel Breuillard, \emph{A height gap theorem for finite subsets of {${\rm
				GL}_d(\overline{\Bbb Q})$} and nonamenable subgroups}, Ann. of Math. (2)
	\textbf{174} (2011), no.~2, 1057--1110. \MR{2831113}
	
	\bibitem[Bri09]{Br2009Anti-affine-algebr}
	Michel Brion, \emph{Anti-affine algebraic groups}, J. Algebra \textbf{321}
	(2009), no.~3, 934--952. \MR{2488561}
	
	\bibitem[CRRZ22]{boundedlygen}
	Pietro Corvaja, Andrei~S. Rapinchuk, Jinbo Ren, and Umberto~M. Zannier,
	\emph{Non-virtually abelian anisotropic linear groups are not boundedly
		generated}, Invent. Math. \textbf{227} (2022), no.~1, 1--26. \MR{4359474}
	
	\bibitem[DG70]{DeGa1970Groupes-algebrique}
	Michel Demazure and Pierre Gabriel, \emph{Groupes alg\'{e}briques. {T}ome {I}:
		{G}\'{e}om\'{e}trie alg\'{e}brique, g\'{e}n\'{e}ralit\'{e}s, groupes
		commutatifs}, Masson \& Cie, \'{E}diteurs, Paris; North-Holland Publishing
	Co., Amsterdam, 1970, Avec un appendice {{\i}t Corps de classes local} par
	Michiel Hazewinkel. \MR{0302656}
	
	\bibitem[DP00]{davidphilippon}
	Sinnou David and Patrice Philippon, \emph{Torsion subvarieties of semi-abelian
		varieties}, C. R. Acad. Sci., Paris, S{\'e}r. I, Math. \textbf{331} (2000),
	no.~8, 587--592 (French).
	
	\bibitem[Fei96]{feitorders}
	Walter Feit, \emph{Orders of finite linear groups}, Proceedings of the {F}irst
	{J}amaican {C}onference on {G}roup {T}heory and its {A}pplications
	({K}ingston, 1996), Univ. West Indies, Kingston, [1996], pp.~9--11.
	\MR{1484185}
	
	\bibitem[Fri97]{maximalorders}
	Shmuel Friedland, \emph{The maximal orders of finite subgroups in {${\rm
				GL}_n({\bf Q})$}}, Proc. Amer. Math. Soc. \textbf{125} (1997), no.~12,
	3519--3526. \MR{1443385}
	
	\bibitem[GP93]{GrPf1993Geometric-Quotient}
	Gert-Martin Greuel and Gerhard Pfister, \emph{{Geometric Quotients of Unipotent
			Group Actions}}, Proceedings of the London Mathematical Society
	\textbf{s3-67} (1993), no.~1, 75--105.
	
	\bibitem[GP11]{2011Schemas-en-groupes}
	Philippe Gille and Patrick Polo (eds.), \emph{Sch\'{e}mas en groupes ({SGA} 3).
		{T}ome {I}. {P}ropri\'{e}t\'{e}s g\'{e}n\'{e}rales des sch\'{e}mas en
		groupes}, Documents Math\'{e}matiques (Paris) [Mathematical Documents
	(Paris)], vol.~7, Soci\'{e}t\'{e} Math\'{e}matique de France, Paris, 2011,
	S\'{e}minaire de G\'{e}om\'{e}trie Alg\'{e}brique du Bois Marie 1962--64.
	[Algebraic Geometry Seminar of Bois Marie 1962--64], A seminar directed by M.
	Demazure and A. Grothendieck with the collaboration of M. Artin, J.-E.
	Bertin, P. Gabriel, M. Raynaud and J-P. Serre, Revised and annotated edition
	of the 1970 French original. \MR{2867621}
	
	\bibitem[Gro61]{Gr1961Elements-de-geomet-III}
	Alexander Grothendieck, \emph{\'{E}l{\'e}ments de g{\'e}om{\'e}trie
		alg{\'e}brique. {III}. \'{E}tude cohomologique des faisceaux coh{\'e}rents.
		{I}}, Inst. Hautes {\'E}tudes Sci. Publ. Math. (1961), no.~11, 167.
	
	\bibitem[GW10]{GoWe2010Algebraic-geometry}
	Ulrich G\"{o}rtz and Torsten Wedhorn, \emph{Algebraic geometry {I}}, Advanced
	Lectures in Mathematics, Vieweg + Teubner, Wiesbaden, 2010, Schemes with
	examples and exercises.
	
	\bibitem[Hin88]{Hi1988Autour-dune-conjec}
	Marc Hindry, \emph{Autour d'une conjecture de serge lang}, Inventiones
	mathematicae \textbf{94} (1988), no.~3, 575--603.
	
	\bibitem[HS00]{hindrysilverman}
	Marc Hindry and Joseph~H. Silverman, \emph{Diophantine geometry. {An}
		introduction}, Grad. Texts Math., vol. 201, New York, NY: Springer, 2000
	(English).
	
	\bibitem[Hum75]{Hu1975Linear-algebraic-g}
	James~E. Humphreys, \emph{Linear algebraic groups}, Springer-Verlag, New
	York-Heidelberg, 1975, Graduate Texts in Mathematics, No. 21.
	
	\bibitem[Hum95]{Hu1995Conjugacy-classes-}
	\bysame, \emph{Conjugacy classes in semisimple algebraic groups}, Mathematical
	Surveys and Monographs, vol.~43, American Mathematical Society, Providence,
	RI, 1995. \MR{1343976}
	
	\bibitem[Kra84]{Kr1984Geometrische-Metho}
	Hanspeter Kraft, \emph{Geometrische {M}ethoden in der {I}nvariantentheorie},
	Aspects of Mathematics, D1, Friedr. Vieweg \& Sohn, Braunschweig, 1984.
	\MR{768181}
	
	\bibitem[K{\"u}h20]{KuehneBHC}
	Lars K{\"u}hne, \emph{The bounded height conjecture for semiabelian varieties},
	Compos. Math. \textbf{156} (2020), no.~7, 1405--1456. \MR{4120167}
	
	\bibitem[K{\"u}h22]{kuehnesmall}
	\bysame, \emph{Points of small height on semiabelian varieties}, J. Eur. Math.
	Soc. (JEMS) \textbf{24} (2022), no.~6, 2077--2131.
	
	\bibitem[Maz77]{Mazurrational}
	B.~Mazur, \emph{Modular curves and the {E}isenstein ideal}, Inst. Hautes
	\'{E}tudes Sci. Publ. Math. (1977), no.~47, 33--186 (1978), With an appendix
	by Mazur and M. Rapoport. \MR{488287}
	
	\bibitem[Ost]{ostafeerratum}
	A.~Ostafe, \emph{Erratum to ``{O}n a {P}roblem of {L}ang for {M}atrix
		{P}olynomials"}, Homepage of A. Ostafe
	``https://web.maths.unsw.edu.au/~alinaostafe/".
	
	\bibitem[Ray83a]{Raynaudcourbes}
	M.~Raynaud, \emph{Courbes sur une vari\'{e}t\'{e} ab\'{e}lienne et points de
		torsion}, Invent. Math. \textbf{71} (1983), no.~1, 207--233. \MR{688265}
	
	\bibitem[Ray83b]{Raynaudsous}
	\bysame, \emph{Sous-vari\'{e}t\'{e}s d'une vari\'{e}t\'{e} ab\'{e}lienne et
		points de torsion}, Arithmetic and geometry, {V}ol. {I}, Progr. Math.,
	vol.~35, Birkh\"{a}user Boston, Boston, MA, 1983, pp.~327--352. \MR{717600}
	
	\bibitem[Ros56]{Ro1956Some-basic-theorem}
	Maxwell Rosenlicht, \emph{Some basic theorems on algebraic groups}, Amer. J.
	Math. \textbf{78} (1956), 401--443. \MR{82183}
	
	\bibitem[Ros58]{Ro1958Extensions-of-vect}
	\bysame, \emph{Extensions of vector groups by abelian varieties}, Amer. J.
	Math. \textbf{80} (1958), 685--714. \MR{99340}
	
	\bibitem[Ser58]{Se1958Espaces-fibres-alg}
	Jean-Pierre Serre, \emph{Espaces fibr\'es alg\'ebriques}, S\'eminaire Claude
	Chevalley \textbf{3} (1958) (fr), talk:1.
	
	\bibitem[SR17]{SaRi2017Actions-and-invari}
	Walter~Ferrer Santos and Alvaro Rittatore, \emph{Actions and invariants of
		algebraic groups}, second ed., Monographs and Research Notes in Mathematics,
	CRC Press, Boca Raton, FL, 2017. \MR{3617213}
	
	\bibitem[Ste65]{St1965Regular-elements-o}
	Robert Steinberg, \emph{Regular elements of semisimple algebraic groups}, Inst.
	Hautes \'{E}tudes Sci. Publ. Math. (1965), no.~25, 49--80. \MR{180554}
	
	\bibitem[Ste74]{St1974Conjugacy-classes-}
	\bysame, \emph{Conjugacy classes in algebraic groups}, Lecture Notes in
	Mathematics, Vol. 366, Springer-Verlag, Berlin-New York, 1974, Notes by Vinay
	V. Deodhar. \MR{0352279}
	
	\bibitem[Tal00]{talamancaheight}
	Valerio Talamanca, \emph{A {G}elfand-{B}eurling type formula for heights on
		endomorphism rings}, J. Number Theory \textbf{83} (2000), no.~1, 91--105.
	\MR{1767654}
	
	\bibitem[Tze00]{Tz2000The-Manin-Mumford-}
	Pavlos Tzermias, \emph{The {M}anin-{M}umford conjecture: a brief survey}, Bull.
	London Math. Soc. \textbf{32} (2000), no.~6, 641--652. \MR{1781574}
	
	\bibitem[Ull98]{ullmobogomolov}
	Emmanuel Ullmo, \emph{Positivit\'{e} et discr\'{e}tion des points
		alg\'{e}briques des courbes}, Ann. of Math. (2) \textbf{147} (1998), no.~1,
	167--179. \MR{1609514}
	
	\bibitem[Yaf22]{andreisubspace}
	Andrei Yafaev, \emph{Non-commutative analytic subgroup theorem}, J. Number
	Theory \textbf{230} (2022), 233--237. \MR{4327956}
	
	\bibitem[Zha98]{zhangbogomolov}
	Shou-Wu Zhang, \emph{Equidistribution of small points on abelian varieties},
	Ann. of Math. (2) \textbf{147} (1998), no.~1, 159--165. \MR{1609518}
	
\end{thebibliography}
\end{document}